\newcommand{\IN}{\mathbb N}
\newcommand{\w}{\omega}
\newcommand{\defeq}{\overset{\mathsf{def}}=}
\newcommand{\supp}{\mathrm{supp}}
\newcommand{\ddoteq}{\,\ddot{=}\,}
\newcommand{\e}{\varepsilon}
\newcommand{\C}{\mathcal C}
\newcommand{\cov}{\mathrm{cov}}
\newcommand{\A}{\mathcal A}
\newtheorem{theorem}{Theorem}[section]
\newtheorem{lemma}[theorem]{Lemma}
\newtheorem{claim}[theorem]{Claim}
\newtheorem{remark}[theorem]{Remark}
\newtheorem{problem}[theorem]{Problem}
\newtheorem{corollary}[theorem]{Corollary}
\newtheorem{question}[theorem]{Question}
\title[A $36$-Shelah group]{A  non-polybounded absolutely closed  $36$-Shelah group} 
\author{Taras Banakh}
\address{Ivan Franko National University of Lviv, Ukraine, and Jan Kochanowski University in Kielce, Poland}
\email{t.o.banakh@gmail.com}
\keywords{Shelah group, Jonsson semigroup, non-topologizable group, polybounded semigroup, absolutely $\mathsf{T_{\!1}S}$-closed semigroup, projectively $\mathsf{T_{\!1}S}$-discrete semigroup}
\subjclass{03E50; 20E06; 22A05; 22015; 54H11}
\begin{document}
\begin{abstract} Let $\kappa$ be an infinite cardinal such that $\kappa^+=2^\kappa$. We prove that every group $H$ of cardinality $|H|\le\kappa$ is a subgroup of a group $G$ of cardinality $|G|=\kappa^+$ such that (i) $G$ is $36$-Shelah, which means that $A^{36}=G$ for any subset $A\subseteq G$ of cardinality $|A|=|G|$; (ii) $G$ is absolutely $\mathsf{T_{\!1}S}$-closed and projectively $\mathsf{T_{\!1}S}$-discrete, which means that for every homomorphism $h:G\to Y$ to a $T_1$ topological semigroup $Y$ the image $h[G]$ is a closed discrete subspace of $Y$, (iii) $G$ cannot be covered by finitely many algebraic subsets, i.e., subsets of the form $\{x\in G:xc_1xc_2\cdots xc_n=e\}$ for some $c_1,c_2,\cdots,c_n\in G$. 
\end{abstract}
\maketitle

\section{Introduction}

A semigroup $S$ is called
\begin{itemize}
\item {\em Jonsson} if every subsemigroup $X\subseteq S$ of cardinality $|X|=|S|$ coincides with $S$;
\item {\em $n$-Shelah} for $n\in\IN$ if $\{x_1\cdots x_n:x_1,\dots,x_n\in X\}=S$ for every subset $X\subseteq S$ of cardinality $|X|=|S|$;
\item {\em Shelah} if $S$ is $n$-Shelah for some $n\in\IN$.
\end{itemize}
It is clear that every finite semigroup is $1$-Shelah and every Shelah semigroup is Jonsson. A simple example of a countable Jonsson group is the Pr\"ufer $p$-group
$$\{z\in \mathbb C:\exists n\in\IN\;\;(z^{p^n}=1)\}$$where $p$ is a prime number.  On the other hand, by a result of Protasov \cite{Prot}, every countable Shelah semigroup is finite. 

The first example of an infinite Shelah semigroup was constructed by Shelah \cite{Shelah}. More precisely, for every infinite cardinal $\kappa$ with $\kappa^+=2^\kappa$, Shelah has constructed a $6640$-Shelah group of cardinality $\kappa^+=2^\kappa$. A valuable additional property of the Shelah group $G$ is its nontopologizability, which means that $G$ admits no non-discrete group topology. The Shelah's group was the first example of a non-topologizable infinite group, which answered a famous question of Markov \cite{Markov}. A countable non-topologizable group was constructed by Ol'shanskii \cite{Ol}.

In this paper we elaborate the ideas of Shelah \cite{Shelah} in more details and construct for every infinite cardinal $\kappa$ with $\kappa^+=2^\kappa$ a $36$-Shelah group of cardinality $\kappa^+$ which is projectively $\mathsf{T_{\!1}S}$-discrete, absolutely $\mathsf{T_{\!1}S}$-closed, and  not polybounded.

Let us recall the definitions of those notions.

Let $\C$ be a class of topological semigroups. A semigroup $X$ is called
\begin{itemize}
\item {\em  $\C$-discrete} if for every injective homomorphism $h:X\to Y$ to a topological semigroup $Y\in\C$ the image $h[X]$ is a discrete subspace of $Y$;
\item {\em projectively $\C$-discrete} if  for every homomorphism $h:X\to Y$ to a
topological semigroup $Y\in\C$ the image $h[X]$ is a discrete subspace of $Y$;
\item {\em absolutely $\C$-closed} if   for every homomorphism $h:X\to Y$ to a
topological semigroup $Y\in\C$ the image $h[X]$ is a closed subspace of the topological semigroup $Y$;
\item {\em injectively $\C$-closed} if   for every injective  homomorphism $h:X\to Y$ to a
topological semigroup $Y\in\C$ the image $h[X]$ is a closed subspace of the topological semigroup $Y$;
\item {\em $\C$-closed} if $X$ is closed in every topological semigroup $Y\in\C$ containing $X$ as a discrete subsemigroup.
\end{itemize}
It is clear that every absolutely $\C$-closed semigroup is injectively $\C$-closed and every injectively $\C$-closed semigroup is $\C$-closed.

Observe that a group is non-topologizable if and only if it is $\mathsf{TG}$-discrete for the class $\mathsf{TG}$ of Hausdorff topological groups. Let $\mathsf{T_{\!1}S}$ denote the class of $T_1$ topological semigroups, i.e., topological semigroups whose all finite subsets are closed. Since $\mathsf{TG}\subseteq\mathsf{T_{\!1}S}$, every (projectively) $\mathsf{T_{\!1}S}$-discrete group is (projectively) $\mathsf{TG}$-discrete and hence non-topologizable. 

By \cite[Proposition 3.2]{BB}, a semigroup is injectively $\mathsf{T_{\!1}S}$-closed if and only if it is $\mathsf{T_{\!1}S}$-closed and $\mathsf{T_{\!1}S}$-discrete. By \cite[Proposition 3.3]{BB}, every absolutely $\mathsf{T_{\!1}S}$-closed semigroup is projectively $\mathsf{T_{\!1}S}$-discrete. In particular, every absolute $\mathsf{T_{\!1}S}$-closed group is non-topologizable.

For a semigroup $X$ let $X^1=X\cup\{1\}$ where $1$ is an element such that $x1=x=1x$ for all $x\in X^1$. A function $f:X\to X$ on a semigroup $X$ is called a {\em semigroup polynomial} if there exist $n\in\IN$ and elements $c_0,\dots,c_n\in X^1$ such that $f(x)=c_0xc_1\cdots xca_n$ for all $x\in X$. 

A semigroup $X$ is defined to be {\em polybounded} if $X=\bigcup_{i=1}^np_i^{-1}(b_i)$ for some elements $b_1,\dots,b_n\in X$ and some semigroup polynomials $p_1,\dots,p_n$ on $X$. 

Polybounded semigroups play an important role in the theory of categorically closed semigroups \cite{BB}, \cite{ACS} and have many nice properties. For example, cancellative polybounded semigroups are groups \cite[1.8]{BB}, polybounded paratopological groups are topological groups \cite[8.1]{BB}, each polybounded group $X$ is $\mathsf{T_{\!1}S}$-closed \cite[6.1]{BB}. Moreover, by \cite[1.9]{BB}, a countable group $X$ is $\mathsf{T_{\!1}S}$-closed if and only if it is polybounded. The latter  characterization is specific for countable groups and does not extend to the uncountable case, as shown by the following theorem which is the main result of this paper.

\begin{theorem}\label{t:main}  For every infinite cardinal $\kappa$ with $\kappa^+=2^\kappa$ there exists a non-polybounded  absolutely $\mathsf{T_{\!1}S}$-closed $36$-Shelah group $X$ of cardinality $|X|=\kappa^+$.
\end{theorem}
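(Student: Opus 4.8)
The plan is to follow Shelah's original strategy of building $X$ as an increasing union of an $\omega_1$-chain (or rather a $\kappa^+$-chain) of subgroups, performing a transfinite recursion of length $\kappa^+$ that diagonalizes against all potential "bad" configurations. Concretely, I would start from the given group $H$ of size $\le\kappa$, and at each stage $\alpha<\kappa^+$ have a group $G_\alpha$ of size $\le\kappa$ with $G_0\supseteq H$; taking unions at limits and setting $X=\bigcup_{\alpha<\kappa^+}G_\alpha$. The hypothesis $\kappa^+=2^\kappa$ is what allows a bookkeeping enumeration of length $\kappa^+$ of all subsets $A\subseteq\kappa^+$ of size $\kappa^+$ (equivalently, their initial segments, which live in the $G_\alpha$'s) so that every such $A$ gets "caught" cofinally often. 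The generators $G_{\alpha+1}$ will be obtained from $G_\alpha$ by an HNN-type or free-product-with-amalgamation construction over a carefully chosen finitely generated (or $\kappa$-sized) subgroup, imposing relations of a uniform bounded length; the numerology $36=6\cdot 6$ (versus Shelah's $6640$) will come from optimizing how many "$\cdot x\cdot$" blocks are needed to express an arbitrary element once $A$ meets every $G_\alpha$ on a large set.

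The key steps, in order, would be: (1) fix the bookkeeping function $\pi:\kappa^+\to\kappa^+$ with each value attained $\kappa^+$ times and with $\pi(\alpha)\le\alpha$, so that at stage $\alpha$ we are handed a subset $A_\alpha\subseteq G_\alpha$ that we must handle; (2) describe the one-step extension $G_\alpha\hookrightarrow G_{\alpha+1}$, an amalgamated product over a common subgroup, chosen so that (a) new relations force: whenever $A\cap G_\alpha$ is "unbounded" in the appropriate sense, every element of $G_{\alpha+1}$ is a product of $\le 36$ elements of $A$, (b) new relations also code potential continuous semigroup polynomials so as to kill non-discreteness and non-closedness, and (c) new "generic" elements are added that cannot be captured by any finite family of algebraic subsets, securing non-polyboundedness; (3) verify by induction that each $G_\alpha$ has size $\le\kappa$ and embeds $H$; (4) in $X=\bigcup_\alpha G_\alpha$, prove the three target properties via a reflection/closing-off argument: given any $A\subseteq X$ with $|A|=\kappa^+$, the set $C=\{\alpha:A\cap G_\alpha$ is cofinal-type-large in $G_\alpha\}$ is a club, so pick $\alpha\in C$ in the range of the bookkeeping where $A_\alpha=A\cap G_\alpha$ was processed, and conclude $X=A^{36}$; analogously for a homomorphism $h:X\to Y$ into a $T_1$ semigroup, use a club of $\alpha$ with $h\restriction G_\alpha$ already "deciding" the relevant separation, invoking that by \cite[Proposition 3.3]{BB} it suffices to get projective $\mathsf{T_{\!1}S}$-discreteness plus $\mathsf{T_{\!1}S}$-closedness; (5) derive non-polyboundedness from the genericity clause, noting a polybounded group would be covered by $\kappa^+$-many... no, by finitely many algebraic subsets, contradicting (iii) which we built in.

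The main obstacle I expect is step (2): engineering a single bounded-length amalgamation step that simultaneously (i) keeps the group size $\le\kappa$ and the construction consistent (the amalgam must not collapse $H$ or earlier levels — one needs the small-cancellation / normal-form guarantees that amalgamated free products are faithful on factors), (ii) produces the Shelah "bounded product" phenomenon with the small constant $36$, which requires a delicate combinatorial lemma about how a large subset $A$ of a group generates everything in a fixed number of steps once it meets a distinguished "reservoir" of elements, and (iii) does the non-topologizability/closedness coding, i.e. ensures that any would-be limit point or non-isolated image of $h[X]$ already forces an equation that our relations refute — this is the part where Shelah's use of specially constructed HNN extensions with torsion and his "unbounded" function tricks come in, and adapting them to get absolute $\mathsf{T_{\!1}S}$-closedness (a formally stronger conclusion than mere non-topologizability) rather than just $\mathsf{TG}$-discreteness is the genuinely new technical content. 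A secondary obstacle is the precise bookkeeping so that limits of length $\mathrm{cf}(\kappa^+)=\kappa^+$ behave, and that "large" is the right notion (stationary, or cofinal, or full measure under some ideal $\I$) to make the club argument in step (4) go through cleanly; getting these definitions right up front is what makes the rest routine.
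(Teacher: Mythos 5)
Your skeleton does match the paper's: a recursion of length $\kappa^+$ through groups $G_\alpha$ of size $\kappa$, one-step extensions by free products with amalgamation controlled by small cancellation, bookkeeping made possible by $\kappa^+=2^\kappa$, and a reflection argument at the end. But there is a genuine gap at your clause (2)(b) and the corresponding half of step (4): you propose to ``code potential continuous semigroup polynomials'' / homomorphisms $h:X\to Y$ into $T_1$ semigroups into the relations, i.e.\ to diagonalize against would-be non-discrete or non-closed images. This cannot be implemented: the targets $Y$ form a proper class, and a homomorphism to an unknown $T_1$ semigroup is not an object determined by a $\kappa$-sized subset of $X$, so no bookkeeping of length $\kappa^+$ enumerates the configurations you would have to kill. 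The paper never diagonalizes against topologies. Instead it builds purely algebraic properties into the recursion --- each $G_\alpha$ is malnormal in $G_{\alpha+1}$ (whence for every $X\subseteq G$ with $|X|<\kappa^+$ there is $g$ with $X\cap gXg^{-1}\subseteq\{e\}$), $G$ is simple, and at stage $\alpha+1$ a new element $\lambda$ of order $51$ is adjoined with the universal relation $\lambda x\lambda^2x\lambda^3x\cdots\lambda^{18}x=e$ for all $x\in G_\alpha$, so that every subset of size $\le\kappa$ lies in a \emph{single} algebraic set --- and then derives the topological conclusions abstractly: simplicity plus $36$-Shelah plus the conjugate-disjointness property force any $T_1$ semigroup topology on a homomorphic image to be discrete, and absolute closedness follows because a would-be boundary point of $h[G]$ yields (by the $36$-Shelah property) a subset of size $<\kappa^+$ whose image is polybounded, hence closed by Lemma~\ref{l:polybounded}. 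Without some replacement of this kind your closedness claim has no proof.

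The second gap is in step (2) itself, which you correctly flag as the main obstacle but leave unsupplied, and where your numerology is off: the relations are $36$-letter alternating words $a_0xa_1x\cdots a_{17}x$ (with $a_i=\lambda^{i+1}$) imposed for all $x\in G_\alpha$, together with $c^{-1}x_0ax_1a\cdots x_{17}a$ for the bookkept triples $(c,a,S)$ with $x_0,\dots,x_{17}$ chosen $H^\pm$-separated inside $S$; thus $36=18+18$, and $18$ is dictated by the small-cancellation threshold (pieces of the symmetrized hull have length $\le 5$, giving $C'(1/7)$ for words of length $36$), not by a ``$6\cdot6$'' optimization. Making this work requires the malnormality of $G_\alpha$ in $G_{\alpha+1}$ to be \emph{preserved} by the quotient (a dedicated lemma), and non-polyboundedness is not secured by vague ``generic elements'' but by an extra clause of the amalgamation lemmas ($e\ne lgc_1lgc_2\cdots lgc_n$ for coefficients $c_i$ in the amalgamated subgroup), which at the top level yields: for every $C\subseteq G$ with $|C|<\mathrm{cf}(\kappa)$ there is $x$ with $xc_1\cdots xc_n\ne e$ for all $c_i\in C$; one must check this coexists with the fact that every small subset is covered by one algebraic set, which is exactly the delicate tension the theorem lives on. Finally, for $\kappa=\omega$ your club argument with ``$A\cap G_\alpha$ large'' is insufficient as stated: largeness must be measured by the dimension $\dim(A\cap G_\alpha;G_\alpha)$, its non-collapse at later stages has to be built into the recursion, and locating a suitable stage uses a Fodor-type pressing-down argument rather than a plain cofinality count.
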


The proof of Theorem~\ref{t:main} follows the lines of the proof of Shelah's Theorem 2.1 \cite{Shelah}, who applied is his contruction the small cancellation theory for free products with amalgamation.  The necessary definitions and results of this theory are recalled in Section~\ref{s:SCT}. In Section~\ref{s:malnormal} we prove a lemma on the preservation of malnormality by suitable quotients of free products with amalgamation. In Section~\ref{s:C'} we prove the condition $C'(\lambda)$ for some special subsets of a free product with amalgamation. In Section~\ref{s:AL} we use those special subsets in the proofs of two Amalgamation Lemmas \ref{l:amalgamation} and \ref{l:amalgamation2}. In Section~\ref{s:AL} the Amalgamation Lemmas are applied in the inductive step of the inductive proof of Embedding Lemma~\ref{l:EL}, which is  used in the inductive step of the proof of  Theorem~\ref{t:shelah}, which is a more elaborated version of Theorem~\ref{t:main}. In the final Section~\ref{s:final} we ask some question related to $n$-Shelah groups.

\section{Preliminaries}

Every ordinal $\alpha$ coincides with the set $\{\gamma:\gamma<\alpha\}$ of smaller ordinals. In particular, every number $n\in\IN$ coincides with the set $\{0,\dots,n-1\}$. 

For an ordinal $\alpha$ its {\em cofinality} $\mathrm{cf}(\alpha)$ is the smallest cardinality of a subset $C\subseteq \alpha$ such that for every $a\in\alpha$ there exists $c\in C$ such that $a\le c$.

Cardinals are the smallest ordinals of a given cardinality. The successor cardinal of a cardinal $\kappa$ is denoted by $\kappa^+$. We denote by $\w$ the smallest infinite cardinal and by $\IN$ the set $\w\setminus\{0\}=\{1,2,3,\dots\}$ of positive finite ordinals.

 Every number $n\in\IN$ is endowed with the binary operations $\oplus:n\times n\to n$ and $\ominus:n\times n\to n$ defined by
$$i\oplus j=\begin{cases}i+j&\mbox{if $i+j<n$},\\
i+j-n&\mbox{if $i+j\ge n$},
\end{cases}
\quad\mbox{and}\quad
i\ominus j=\begin{cases}i-j&\mbox{if $j\le i$},\\
i-j+n&\mbox{if $j>i$},
\end{cases}
$$ for $i,j\in n$. So, $\oplus$ and $\ominus$ are the operations of addition and subtraction modulo $n$. For an element $x\in n$ let $\ominus x=0\ominus x$ be the inverse element to $x$ in the group $(n,\oplus)$.

In the sequel by $e$ we shall denote the identity element of a group.

%For a function $f:X\to Y$ between sets and a subset $A\subseteq X$, we denote by $f[A]=\{f(a):a\in A\}$ the image of the set $A$ under the function $f$. For example, for a funcyion $x:18\to Y$, $x[18]$ is the set $\{x(i):i\in 18\}$.

A subset $A$ of a semigroup $X$ is called {\em algebraic} if $A=p^{-1}(b)$ for some $b\in X$ and some semigroup polynomial $p:X\to X$. For a semigroup $X$ we denote by $\A_X$ the family of all algebraic subsets in $X$. For a subset $B$ of a semigroup $X$ its {\em polyboundedness number} $\cov(B;\A_X)$ is defined as the smallest cardinality $|\mathcal B|$ of a subfamily $\mathcal B\subseteq\A_X$  such that $B\subseteq\mathcal B$. It is clear that a semigroup $X$ is polybounded if and only if $\cov(X;\A_X)<\w$. A subset $B$ of a semigroup $X$ is defined to be {\em polybounded in $X$} if $\cov(B;\A_X)$ is finite.

The following lemma was proved in \cite[Lemma 6.1]{BB}.

\begin{lemma}\label{l:polybounded} Every polybounded subset $B$ of a semigroup $X$ is closed in every $T_1$-topological semigroup $Y$ containing $X$ as a discrete subsemigroup.
\end{lemma}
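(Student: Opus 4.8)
The statement asserts that a polybounded subset $B$ of a semigroup $X$ is closed in any $T_1$ topological semigroup $Y$ containing $X$ as a discrete subsemigroup. The plan is to reduce the general polybounded case to the case of a single algebraic set, and then to handle a single algebraic set by a direct continuity argument.

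First I would observe that if $B\subseteq\bigcup_{i=1}^n A_i$ with each $A_i\in\A_X$, then it suffices to prove that each algebraic set $A_i$ is closed in $Y$: a finite union of closed sets is closed, and a closed subset of a closed set is closed, so the closure of $B$ in $Y$ lands inside $\bigcup_{i=1}^n A_i$ and is itself covered. (One should be slightly careful here that the $A_i$ are subsets of $X$, not of $Y$; what we actually show is that the \emph{closure} $\overline{A_i}$ taken in $Y$ satisfies $\overline{A_i}\cap X=A_i$ is not quite enough — rather we show $\overline{A_i}=A_i$, i.e. $A_i$ is already closed in $Y$, using that $X$ is discrete in $Y$.) So the crux is: for a semigroup polynomial $p:X\to X$, $p(x)=c_0xc_1\cdots xc_n$ with $c_0,\dots,c_n\in X^1$, and a point $b\in X$, the set $p^{-1}(b)=\{x\in X: c_0xc_1\cdots xc_n=b\}$ is closed in $Y$.

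For this, I would extend the semigroup polynomial to a continuous map on $Y$. Define $\tilde p:Y\to Y$ by the same word $\tilde p(y)=c_0yc_1\cdots yc_n$; since multiplication in $Y$ is continuous and left/right translations by the fixed elements $c_0,\dots,c_n\in X\subseteq Y$ (interpreting a factor $1$ as the identity map, which is trivially continuous) are continuous, $\tilde p$ is a composition of continuous maps and hence continuous. Then $\tilde p^{-1}(\{b\})$ is closed in $Y$ because $Y$ is $T_1$, so singletons are closed. Finally I would check $\tilde p^{-1}(\{b\})\cap X=p^{-1}(b)$ — immediate, since $\tilde p$ restricted to $X$ is $p$ and $b\in X$ — but more to the point, since $X$ is a \emph{discrete} subsemigroup of $Y$, it need not be closed, so I must argue that $p^{-1}(b)$ is closed in $Y$ and not merely in $X$. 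Here is where discreteness of $X$ in $Y$ enters: a limit point $y\in Y$ of $p^{-1}(b)$ lies in $\tilde p^{-1}(\{b\})$ by continuity and closedness of that set, and one shows such a $y$ must already belong to $X$ — indeed, hmm, this is the delicate point and I suspect the intended argument is subtler, perhaps passing to the subsemigroup generated by $X$ and $y$, or using that $p^{-1}(b)\subseteq X$ is discrete so its closure adds only points not isolated in it. The cleanest route: $p^{-1}(b)$ is a subset of the discrete subspace $X$, hence $p^{-1}(b)$ is closed \emph{in} $X$; combined with $\tilde p^{-1}(b)$ closed in $Y$ and $\tilde p^{-1}(b)\cap X = p^{-1}(b)$, and a separation/closure argument, one concludes.

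The main obstacle I anticipate is precisely this last step: bridging from "closed in $X$" and "preimage of a closed set under a continuous map on $Y$" to "closed in $Y$", because $X$ itself is only assumed discrete in $Y$, not closed. I expect the resolution to use the $T_1$ hypothesis more essentially than the naive argument above — for instance, that any point in $\overline{p^{-1}(b)}\setminus p^{-1}(b)$ would, together with finitely many elements of $X$, generate a configuration contradicting either discreteness of $X$ or the defining equation, exploiting that the polynomial equation $c_0xc_1\cdots xc_n=b$ is preserved in the closure. Modulo that point, the proof is a routine continuity-plus-$T_1$ argument, and the finite-union reduction at the start is purely formal.
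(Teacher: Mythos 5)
Your finite-union reduction and the observation that the extended polynomial $\tilde p:Y\to Y$ is continuous with $\tilde p^{-1}(b)$ closed in the $T_1$ space $Y$ are fine, but the step you yourself flag as delicate --- passing from ``$y\in\overline{p^{-1}(b)}$ and $\tilde p(y)=b$'' to ``$y\in X$'' --- is left unproved, and it is the entire content of the lemma. Moreover, the ``routine continuity-plus-$T_1$ argument'' you hope will close it cannot exist at this level of generality, because for an arbitrary semigroup $X$ the statement is false: let $Y=[0,1]$ with the continuous associative multiplication $uv=0$ and $X=\{0\}\cup\{\tfrac12+\tfrac1n:n\ge 3\}$. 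Then $X$ is a discrete subsemigroup of the metrizable topological semigroup $Y$, and $X=\{x\in X:xx=0\}$ is covered by a single algebraic set, hence polybounded, yet $\tfrac12\in\overline{X}\setminus X$; note that $y=\tfrac12$ satisfies $\tilde p(y)=0=b$ and is a limit of $p^{-1}(b)$ but lies outside $X$, which is exactly the implication your plan needs. So any correct proof must use structure beyond discreteness, continuity and $T_1$: the lemma is really about subsets of groups (or at least cancellative semigroups), which is how it is applied in this paper (to $X=h[G]$, a group); the paper itself gives no proof and quotes \cite[Lemma 6.1]{BB}.

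Here is the missing argument in that case, after your reduction to a single set $A\subseteq\{x\in X:c_0xc_1\cdots xc_n=b\}$ with a putative point $y\in\overline{A}\setminus X$. Discreteness of $X$ at the point $b$ gives an open $W\ni b$ in $Y$ with $W\cap X=\{b\}$. Since $\tilde p(y)=b$, joint continuity of the map $(y_1,\dots,y_n)\mapsto c_0y_1c_1y_2c_2\cdots y_nc_n$ at $(y,\dots,y)$ gives an open $V\ni y$ with $c_0Vc_1V\cdots Vc_n\subseteq W$. Since $Y$ is $T_1$ and $y\notin A$, the set $V\cap A$ is infinite; choose $u\ne v$ in $V\cap A$ and fix $x_2,\dots,x_n\in V\cap A$. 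The two mixed products $c_0uc_1x_2c_2\cdots x_nc_n$ and $c_0vc_1x_2c_2\cdots x_nc_n$ lie in $W\cap X=\{b\}$, hence both equal $b$, and cancelling $c_0$ on the left and $c_1x_2c_2\cdots x_nc_n$ on the right in the group $X$ yields $u=v$, a contradiction; thus $\overline{A}\subseteq X$, and discreteness of $X$ finishes exactly as in your first step. The two ideas absent from your proposal are the decoupling of the different occurrences of $x$ (substituting distinct nearby elements of $A$ into different slots, legitimized by joint continuity together with discreteness of $X$ at $b$) and cancellation in $X$; continuity and $T_1$ alone are provably insufficient.
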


We recall that a {\em topological semigroup} is  topological space endowed with a continuous associative binary operation. Equivalently, a topological semigroup can be defined as a semigroup endowed with a semigroup topology. A topology $\tau$ on a semigroup $X$ is called a {\em semigroup topology} if $(X,\tau)$ is a topological semigroup.

For a semigroup $X$ its {\em zero-extension} is the semigroup $X^0=X\cup\{0\}$ where $0\notin X$ is an element such that $0x=0=x0$ for all $x\in X^0$.
A semigroup $X$ is called {\em zero-closed} if $X$ is closed in $X^0$ endowed with any Hausdorff semigroup topology. It is clear that every absolutely $\mathsf{T_{\!1}S}$-closed semigroup is zero-closed.  The following lemma is proved in \cite{BR}.

\begin{lemma}\label{l:zero-closed} Every zero-closed infinite semigroup $X$ has $\cov(X;\A_X)<|X|$.
\end{lemma}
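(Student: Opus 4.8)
The plan is to argue the contrapositive. First note that $\cov(X;\A_X)\le|X|$ always holds: $X=\bigcup_{x\in X}\{x\}$, and each singleton is the algebraic set $\{x\}=p^{-1}(x)$ for the semigroup polynomial $p(t)=1\cdot t\cdot 1=t$ (here $n=1$, $c_0=c_1=1$). So it suffices to show that if $\cov(X;\A_X)=|X|=:\kappa$ then $X$ is not zero-closed, i.e. there is a Hausdorff semigroup topology on $X^0$ in which $X$ is not closed. I would look for such a topology in the simplest shape: make $X$ an open discrete subspace and take a neighbourhood base at $0$ of the form $\{\{0\}\cup F:F\in\mathcal F\}$ for a filter $\mathcal F$ on $X$. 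With this shape $0$ is automatically non-isolated, so $X$ is automatically non-closed; checking the $T_2$-axiom and joint continuity of the multiplication of $X^0$ at the four kinds of points $(x,y)$, $(x,0)$, $(0,y)$, $(0,0)$ with $x,y\in X$ shows that such a $\tau$ is a Hausdorff semigroup topology precisely when $\mathcal F$ is \emph{free} ($\bigcap\mathcal F=\emptyset$), is invariant under the preimage maps $\lambda_a^{-1}$ and $\rho_a^{-1}$ of the translations ($a\in X$), and satisfies the \emph{square-root condition}: for each $F\in\mathcal F$ there is $F'\in\mathcal F$ with $F'F'\subseteq F$. So everything reduces to producing, on an arbitrary infinite semigroup $X$ with $\cov(X;\A_X)=|X|$, a free $\lambda_a^{-1},\rho_a^{-1}$-invariant square-root-closed filter.

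The basic engine is the observation that $\lambda_a^{-1}$ and $\rho_a^{-1}$ send algebraic sets to algebraic sets: for a polynomial $p(t)=c_0tc_1\cdots tc_n$ one has $p(at)=q(t)$ and $p(ta)=r(t)$ where $q(t)=(c_0a)t(c_1a)t\cdots(c_{n-1}a)t\,c_n$ and $r(t)=c_0t(ac_1)t\cdots t(ac_n)$, whence $\lambda_a^{-1}[p^{-1}(b)]=q^{-1}(b)$ and $\rho_a^{-1}[p^{-1}(b)]=r^{-1}(b)$. Consequently $\mathcal J:=\{B\subseteq X:\cov(B;\A_X)<\kappa\}$ is a proper ideal on $X$ (proper exactly by the hypothesis $\cov(X;\A_X)=\kappa$) which contains all singletons and is closed under $\lambda_a^{-1}$ and $\rho_a^{-1}$; dually, the co-ideal filter $\mathcal F_0:=\{F\subseteq X:X\setminus F\in\mathcal J\}$ is free and $\lambda_a^{-1},\rho_a^{-1}$-invariant, and is the natural starting point. (One can also note that $\{p\in\beta X:p\cap\mathcal J=\emptyset\}$ is a nonempty closed subsemigroup of $\beta X$, hence contains a free idempotent ultrafilter refining $\mathcal F_0$, although an idempotent by itself does not deliver the square-root condition.)

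Starting from $\mathcal F_0$ I would build the desired $\mathcal F$ by a transfinite recursion of length $\kappa$ (the singular case requiring extra care at limit stages), producing a $\subseteq$-decreasing chain $(F_\alpha)_{\alpha<\kappa}$ generating $\mathcal F$ and, via a bookkeeping of $\kappa$ scheduled tasks, interleaving: (i) shrinking $F_\alpha$ to exclude a prescribed point (to keep $\mathcal F$ free); (ii) passing from $F_\alpha$ to a subset of $\lambda_a^{-1}[F_\beta]\cap\rho_a^{-1}[F_\beta]$ for a scheduled pair $(a,\beta)$ (to keep $\mathcal F$ invariant under translation-preimages); and (iii) inserting a square root, i.e. an $F_{\alpha+1}$ with $F_{\alpha+1}F_{\alpha+1}\subseteq F_\beta$, for a scheduled $\beta$. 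Tasks (i) and (ii) are harmless, since $\mathcal J$ is a $\lambda_a^{-1},\rho_a^{-1}$-invariant proper ideal and so the sets produced stay outside $\mathcal J$ (in particular nonempty). The hard step — and, I expect, the technical core of the whole proof — is (iii): one cannot run the recursion keeping every $F_\alpha$ co-$\mathcal J$, because once $F_\alpha$ is forced to omit the identity $e$, any square root $F'$ of it must be asymmetric ($F'\cap F'^{-1}=\emptyset$ in the group case), and an asymmetric set never has co-$\mathcal J$ complement. So the recursion must be carried out with a more flexible ``largeness'' certificate for the $F_\alpha$'s — one that is still preserved by $\lambda_a^{-1},\rho_a^{-1}$, still admits square roots, and is still kept nontrivial by $\cov(X;\A_X)=|X|$ — modelled on the easy special case where $X$ has a homomorphism $\varphi\colon X\to\mathbb Z$ and one simply pulls back along $\varphi$ the filter generated by the rays $\{n,n+1,\dots\}$, which is simultaneously free, translation-invariant and square-root-closed. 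Designing such a certificate in general is where the real work lies.
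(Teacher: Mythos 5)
Your reduction is sound as far as it goes: a Hausdorff semigroup topology on $X^0$ in which $X$ is open and discrete and $0$ has neighbourhood base $\{\{0\}\cup F:F\in\mathcal F\}$ exists precisely when $\mathcal F$ is a free filter of nonempty subsets of $X$ that contains, for each $a\in X$ and $F\in\mathcal F$, a subset of $\lambda_a^{-1}[F]$ and of $\rho_a^{-1}[F]$, and satisfies the square-root condition; any such topology does witness that $X$ is not zero-closed, and your observation that translation preimages of algebraic sets are algebraic (so that $\mathcal J=\{B\subseteq X:\cov(B;\A_X)<|X|\}$ is a proper, translation-preimage-invariant ideal containing all singletons) is correct. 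The problem is that the proposal stops exactly where the proof has to begin: no filter is actually constructed. Tasks (i) and (ii) of your recursion are indeed routine, but task (iii) --- producing square roots while maintaining a ``largeness'' certificate that is preserved by $\lambda_a^{-1},\rho_a^{-1}$ and is kept nontrivial by the hypothesis $\cov(X;\A_X)=|X|$ --- is the entire mathematical content of the lemma, and you explicitly defer it (``where the real work lies''). Your own remark shows that co-$\mathcal J$ membership cannot serve as the certificate, and no replacement is exhibited; the idempotent-ultrafilter aside does not supply one either, since an idempotent only gives the pointwise property $\{x:x^{-1}F\in p\}\in p$, not a base with $F'F'\subseteq F$ compatible with freeness and translation-invariance. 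So what you have is a correct reformulation of the problem together with an unexecuted plan, not a proof, and it is not evident that the plan can be completed in the generality of an arbitrary infinite semigroup.

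For comparison: the paper itself contains no proof of this lemma; it is imported from the preprint [BR] (Banakh--Rega, \emph{The polyboundedness number of a semigroup}). There is therefore no in-paper argument to measure your route against, but judged on its own terms the proposal has a genuine gap at its central step.
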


The proof of the following lemma can be found in \cite[Proposition 3.3]{BB}.

\begin{lemma}\label{l:ac=>pd} Every absolutely $\mathsf{T_{\!1}S}$-closed semigroup is projectively $\mathsf{T_{\!1}S}$-discrete.
\end{lemma}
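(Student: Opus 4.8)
The plan is to argue by contradiction. Let $X$ be an absolutely $\mathsf{T_{\!1}S}$-closed semigroup and let $h\colon X\to Y$ be a homomorphism into a $T_1$ topological semigroup $Y$ such that the image $h[X]$ is \emph{not} discrete. I would then manufacture a new $T_1$ topological semigroup $Y'$ together with a homomorphism $h'\colon X\to Y'$ whose image is not closed in $Y'$, contradicting the absolute $\mathsf{T_{\!1}S}$-closedness of $X$ applied to $h'$. Since $h[X]$ is not discrete, the first move is to fix a point $a\in h[X]$ that is not isolated in the subspace $h[X]\subseteq Y$; equivalently, every open neighbourhood of $a$ in $Y$ contains a point of $h[X]\setminus\{a\}$.

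Next I would build the semigroup $Y'$ as a ``twin-point'' extension of $Y$. Put $Y'=Y\sqcup\{a'\}$ for a fresh symbol $a'$, let $\phi\colon Y'\to Y$ be the retraction with $\phi(a')=a$ and $\phi|_Y=\mathrm{id}_Y$, and define the multiplication on $Y'$ by $x\ast y=\phi(x)\cdot\phi(y)$ (product in $Y$). The outputs always lie in $Y$, where $\phi$ acts as the identity, so associativity of $\ast$ follows at once from associativity in $Y$; moreover $\phi\colon Y'\to Y$ becomes a semigroup homomorphism, $Y$ is a subsemigroup of $Y'$, and $a'$ behaves exactly like $a$ as a left or right factor but never occurs as a product.

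Then I would topologise $Y'$: declare a set open iff it is an open subset of $Y$ (hence does not contain $a'$), or it has the form $\{a'\}\cup(U\setminus\{a\})$ for some open $U\subseteq Y$ with $a\in U$. One checks that this is a topology, that it restricts to the original topology on $Y$, that it is $T_1$ (using that $Y$ is $T_1$, so all singletons, including $\{a\}$ and $\{a'\}$, are closed), and — the key point — that $\phi\colon Y'\to Y$ is continuous. Continuity of $\ast$ is then formal, since $\ast$ is the composite
\[ Y'\times Y'\xrightarrow{\ \phi\times\phi\ }Y\times Y\xrightarrow{\ \mu_Y\ }Y\hookrightarrow Y' \]
of continuous maps. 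Thus $Y'$ is a $T_1$ topological semigroup. Finally, let $h'$ be $h$ composed with the inclusion $Y\hookrightarrow Y'$; it is a homomorphism with $h'[X]=h[X]\subseteq Y$. Every basic neighbourhood of $a'$ is of the form $\{a'\}\cup(U\setminus\{a\})$ with $a\in U$ open in $Y$, and by the non-isolation of $a$ it meets $h[X]\setminus\{a\}$; hence $a'\in\overline{h'[X]}$ while $a'\notin Y\supseteq h'[X]$, so $h'[X]$ is not closed in $Y'$ — the desired contradiction. Therefore $h[X]$ is discrete, i.e. $X$ is projectively $\mathsf{T_{\!1}S}$-discrete.

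I expect the only genuinely delicate step to be the verification that $Y'$, with the topology described above, really is a $T_1$ topological semigroup: checking the $T_1$ axiom at the points $a$ and $a'$ and, above all, the continuity of the retraction $\phi$. Everything else — associativity of $\ast$, the identification of the subspace topology on $Y$, and the non-closedness of $h'[X]$ witnessed by $a'$ — is short once the definitions are in place.
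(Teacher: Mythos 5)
Your argument is correct. The paper itself does not prove Lemma~\ref{l:ac=>pd} but simply quotes it from \cite[Proposition 3.3]{BB}, so what you give is a self-contained substitute, and the ``twin-point'' (doubling) construction you use is exactly the kind of argument that makes the lemma work for the class $\mathsf{T_{\!1}S}$: the space $Y'$ you build is $T_1$ but deliberately not Hausdorff (the points $a$ and $a'$ cannot be separated), which is why the statement is made for $T_1$ topological semigroups rather than Hausdorff ones. All the delicate points you flag do check out: associativity of $x\ast y=\phi(x)\cdot\phi(y)$ is immediate because products land in $Y$ where $\phi$ is the identity; the subspace topology induced on $Y$ from $Y'$ is the original one (here $T_1$-ness of $Y$ is used to see that $U\setminus\{a\}$ is open); $\phi$ is continuous since $\phi^{-1}(U)$ is either $U$ or $U\cup\{a'\}=U\cup(\{a'\}\cup(U\setminus\{a\}))$; hence $\ast=\iota\circ\mu_Y\circ(\phi\times\phi)$ is continuous, and $a'\in\overline{h'[X]}\setminus h'[X]$ by the non-isolation of $a$, contradicting absolute $\mathsf{T_{\!1}S}$-closedness applied to $h'=\iota\circ h$. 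One small repair: the family of sets you ``declare open'' is not literally a topology (the union of a neighbourhood $\{a'\}\cup(U\setminus\{a\})$ of $a'$ with an open set containing $a$ has neither of the two forms), but it is a base (it is closed under the relevant finite intersections and covers $Y'$), and since your verifications of the $T_1$ property, of continuity and of $a'\in\overline{h'[X]}$ only use basic neighbourhoods, the proof goes through verbatim once you say ``base'' or describe the neighbourhood filters instead.
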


\section{Small cancellation theory}\label{s:SCT}

In this section we recall some notions and results of the small cancellation theory in free products with amalgamation, following \cite[\S1]{Shelah}, \cite[\S IV.1]{Hesse} and \cite[\S V.11]{LS}.

Let $G,H,L$ be groups with $G\cap L=H$. Given two elements $x,y\in G\cup L$ we write $x\sim y$ if $\{x,y\}\subseteq G$ or $\{x,y\}\subseteq L$. Let $F$ be the free group generated by the set $L\cup G$ and $N_*$ be the smallest normal subgroup of $F$, containing  the set $$\{xyz^{-1}:x,y,z\in L\cup G\;\wedge\;x\sim y\;\wedge\;z=xy\}.$$The quotient group $F_*=F/N_*$ is called the {\em free product with amalgamation of $L$ and $G$ over $H$} and is denoted by $L*_HG$. % If the group $H$ is trivial, then we write $L*G$ instead of $L*_H G$. 

By \cite[Fact 1.2]{Shelah}, the function $L\cup G\to F_*$, $x\mapsto x N_*$, is  injective, which allows us to identify the groups $L,G$ with their images in $F_*$. By \cite[Fact 1.2]{Shelah}, the intersection of the groups $L,G$ in $F_*$ coincides with the canonical image of the group $H=L\cap G$ in $F_*$. 

%Let $(K\cup G)^{<\w}=\bigcup_{n\in\w}(K\cup G)^n$ be the set of all words in the alphabet $K\cup G$. For a word $w\in (K\cup G)^{<\w}$ is {\em length} $|w|$ the the unique number $n\in\w$ such that $w\in (K\cup G)^n$. For two words $x=(x_1,\dots,x_n)$ and $y=(y_1,\dots,y_n)$ their {\em concatenation} $v\frown w$ is the sequence $(x_1,\dots,x_n,y_1,\dots,y_n)$. 
%A word $x_1\cdots x_n$ in the alphabet $K\cup L$ is called {\em  alternating} if  for every $i<n$ we have $g_i\not\sim g_{i+1}$. In particular, every word of length $1$ is alernating. An alternating word $g_1\cdots g_n$ is called {\em cyclically alternating} if $g_1\not\sim g_n$. The empty word also will be considered as alternating. For a word $

Every element $x\in F_*$ can be written as the product $x=x_1\cdots x_n$ of some elements $x_1,\dots,x_n\in L\cup G$ such that $x_i\not\sim x_{i+1}$ for all $i<n$. Such a representation is called a {\em canonical representation} of $x$. The following lemma from \cite[Fact 1.3]{Shelah} shows that a canonical representation of an element is uniquely determined up to multipliers from the subgroup $H$.

\begin{lemma}\label{l:canrep} If $x_1\cdots x_n$ and $y_1\cdots y_m$ are two canonical representations of the  same element in $F_*$, then $n=m$ and there exists a sequence $(h_i)_{i=0}^n$ in $H$ such that $h_0=e=h_n$ and $x_i=h^{-1}_{i-1}y_ih_i$ for all $i\in\{1,\dots,n\}$.
\end{lemma}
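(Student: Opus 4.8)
The plan is to prove the uniqueness of canonical representations directly from the construction of $F_*=F/N_*$ as a quotient of the free group $F$ on $L\cup G$, using the normal generators of $N_*$. Two canonical representations $x_1\cdots x_n$ and $y_1\cdots y_m$ of the same element of $F_*$ mean that, as elements of the free group $F$, the word $x_1\cdots x_n\, y_m^{-1}\cdots y_1^{-1}$ lies in $N_*$. Since $N_*$ is the normal closure of the relator set $R=\{xyz^{-1}:x,y,z\in L\cup G,\ x\sim y,\ z=xy\}$, this word admits an expression as a product of conjugates of elements of $R^{\pm 1}$ in $F$. Each relator $xyz^{-1}$ (or its inverse) simply replaces an adjacent pair of same-type letters by their product, or splits one letter into two same-type letters; crucially, no relator ever creates an adjacency between a letter of $L\setminus H$ and a letter of $G\setminus H$ type without an intervening $H$-element to absorb. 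The strategy is to track what such ``same-type merge/split'' rewrites can do to a reduced alternating word.

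First I would set up the normal form machinery more carefully, since quoting the relators alone is not enough — I would invoke the standard normal form theorem for free products with amalgamation (which is essentially what \cite[Fact 1.2]{Shelah} and the sentence preceding this lemma already grant us): every element of $F_*$ has a canonical representation, and I want to show it is essentially unique. The cleanest route is to argue by induction on $n$. If $n=0$ then $x$ is the identity; a canonical representation $y_1\cdots y_m$ with $m\ge 1$ and $y_1\not\sim y_2\not\sim\cdots$ of the identity would, by the normal form theorem, force $m=0$, giving the base case. For the inductive step, suppose $x_1\cdots x_n=y_1\cdots y_m$ in $F_*$ with both sides canonical. Consider $x_1^{-1}y_1$: if $x_1\sim y_1$, then $x_1^{-1}y_1$ lies in $L$ or in $G$, and $x_2\cdots x_n=(x_1^{-1}y_1)y_2\cdots y_m$. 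If additionally $x_1^{-1}y_1\notin H$, I would examine whether $(x_1^{-1}y_1)y_2$ can be merged (only if $y_2\sim y_1$, which is excluded) — so either $x_1^{-1}y_1\in H$, call it $h_1$, and then $x_2\cdots x_n = (h_1 y_2) y_3\cdots y_m$ is a canonical representation after absorbing $h_1$ into $y_2$ (note $h_1y_2\sim y_2$, preserving alternation), letting me apply the induction hypothesis; or $x_1\not\sim y_1$, in which case I would show, via the normal form theorem applied to $x_1^{-1}y_1\cdot(y_2\cdots y_m)$ versus $x_2\cdots x_n$, that lengths cannot match — a contradiction. This forces $x_1\sim y_1$, hence $h_0:=e$ and $x_1 = y_1 h_1$ with the remaining chain handled inductively, yielding the sequence $(h_i)_{i=0}^n$ with $h_0=h_n=e$.

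The main obstacle I anticipate is making the ``lengths cannot match'' and ``a relator application cannot bridge an $L\setminus H$ / $G\setminus H$ adjacency'' arguments genuinely rigorous rather than hand-wavy: a naive rewriting argument on products of conjugates of relators is notoriously delicate because conjugating elements can be arbitrarily long words, and cancellation in the free group can temporarily increase complexity. The honest fix is to not reprove the normal form theorem from scratch but to derive this lemma as a formal consequence of it — i.e., use the fact (already cited) that canonical representations exist and that $L\cap G = H$ inside $F_*$, together with the recursive structure of the normal form, so that the entire argument is really an induction that at each step only needs: (a) $x_1\sim y_1$ or else a length mismatch, and (b) the cancellation law in the groups $L$ and $G$. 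I expect step (a) to require the most care, and I would isolate it as the key sub-claim; everything after it is bookkeeping with the $h_i$'s.
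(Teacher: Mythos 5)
There is a genuine gap, and it sits exactly where you yourself locate the ``key sub-claim.'' Your step (a) --- that two canonical representations of the same element must have comparable first letters, ``or else a length mismatch'' --- is not a bookkeeping step: it is equivalent to the Normal Form Theorem for amalgamated free products (a product of a reduced/canonical sequence of length $\ge 2$ is not the identity), which is precisely the nontrivial content of the lemma. Your proposed ``honest fix'' is to derive the lemma as a formal consequence of facts already granted, namely that canonical representations exist and that $L\cup G$ embeds in $F_*$ with $L\cap G=H$ (Shelah's Fact 1.2). But those facts are strictly weaker and cannot imply the conclusion: they hold in many proper quotients of $F_*$ in which the lemma fails. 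Indeed, the paper's own Lemma~\ref{l:qLG} produces quotients $F_{**}=F_*/N$ (with $N$ generated by a long small-cancellation relator set) in which the restriction of the quotient map to $L\cup G$ is still injective, $q[H]=q[L]\cap q[G]$, and every element is still an alternating product of elements of $L\cup G$, yet elements acquire canonical-type representations of different lengths because the relators die. So uniqueness of the length and of the letters up to $H$-multipliers cannot be a formal consequence of embedding-plus-existence; it needs the actual Normal Form Theorem, proved by a genuine argument (e.g.\ van der Waerden's trick of letting $F_*$ act on the set of reduced sequences, or Bass--Serre theory), or an appeal to \cite[Thm.~IV.2.6]{LS}.

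For comparison: the paper does not prove this lemma at all --- it is quoted from \cite[Fact 1.3]{Shelah}, i.e.\ it is exactly the standard uniqueness half of the normal form theorem. Your inductive peeling argument (absorb $x_1^{-1}y_1=h_1\in H$ into $y_2$ and recurse) is the standard and correct way to deduce the stated $H$-multiplier form \emph{from} the theorem ``a reduced product of length $\ge 1$ with letters outside $H$ is $\ne e$,'' and would be fine if you invoked that theorem from \cite{LS} or \cite{Shelah} explicitly. Your first paragraph's attempt to argue directly with products of conjugates of the relators $xyz^{-1}$ is, as you concede, not salvageable as written; the fix is not to retreat to Fact 1.2, but to cite or prove the Normal Form Theorem itself and then run your induction.
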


For an element $x\in F_*$ its {\em length} $|x|$ is defined as the length $n$ of any canonical representation $x_1\cdots x_n$ of $x$. The length of the identity element $e$ of $F_*$ is  zero. Lemma~\ref{l:canrep} ensures that the length of any element $x\in F_*$ is well-defined.  The length satisfies the triangle inequality
$$|xy|\le|x|+|y|$$for every $x,y\in F_*$. 

Given elements $w,u_1,\dots,u_n$ of $F_*$, we write $w\equiv u_1\cdots u_n$ if $w=u_1\cdots u_n$ in $F_*$ and $|w|=|u_1|+\cdots+|u_n|$.  

A element $x\in F_*$ is called {\em weakly cyclically reduced} (resp. {\em cyclically reduced\/}) if either $|x|\le 1$ or $x$ has a canonical representation $x_1\cdots x_n$ such that $x_1x_n\notin H$ (resp. $x_1\not\sim x_n$). Observe that an element  $x\in F_*$ of length $|x|>1$ is cyclically reduced if and only if it has even length. %In the latter case the normal form also will be called {\em weakly cyclically reduced}. 

A set $R\subseteq F_*$ is called {\em half-symmetrized} if every element of $R$ is weakly cyclically reduced and every weakly cyclically reduced element of the set $\{crc^{-1}:c\in F_*,\;r\in R\}$ belongs to $R$. A subset $R\subseteq F_*$ is called {\em symmetrized} if it is half-symmerized and $\{r^{-1}:r\in R\}\subseteq R$.

For a subset $R\subseteq F_*$ its ({\em half-}){\em symmetrized hull} is  the smallest (half-)symmetrized set containing $R$. The following lemma describes the structure of the half-symmetrized hull of a cyclically reduced element. 

\begin{lemma}\label{l:half-hull} Let $r\in F_*$ be a cyclically reduced element of length $n\ge 2$ and $x_0\cdots x_{n-1}$ be a canonical representation of $r$. The half-symmetrized hull of the set $\{r\}$ coincides with the set 
$$R^\circ=\{cx_ix_{i\oplus 1}\cdots x_{i\oplus(n-1)}c^{-1}:i\in n,\;c\in (L\cup G)\setminus Hx_i^{-1},\;c\sim x_i\}.$$
\end{lemma}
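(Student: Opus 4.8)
The plan is to prove the two inclusions $R^\circ\subseteq$ (half-symmetrized hull of $\{r\}$) and (half-symmetrized hull of $\{r\}$) $\subseteq R^\circ$ separately. For the first inclusion, fix $i\in n$ and note that $x_i\cdots x_{i\oplus(n-1)}$ is obtained from $r=x_0\cdots x_{n-1}$ by a cyclic rotation, hence it is a conjugate $d_i r d_i^{-1}$ with $d_i=x_0\cdots x_{i-1}$; this rotation is again cyclically reduced (the relevant endpoints $x_i$ and $x_{i\ominus 1}$ satisfy $x_i\not\sim x_{i\ominus 1}$ because $n$ is even and consecutive letters alternate between $L$ and $G$), so it lies in any half-symmetrized set containing $r$. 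Then for any $c\in L\cup G$ with $c\sim x_i$ and $c\notin Hx_i^{-1}$, the element $cx_i\cdots x_{i\oplus(n-1)}c^{-1}$ is weakly cyclically reduced: since $c\sim x_i$, the product $cx_i$ collapses into a single letter $cx_i$ of $L\cup G$, and $c\notin Hx_i^{-1}$ guarantees $cx_i\notin H$, so the word $(cx_i)x_{i\oplus 1}\cdots x_{i\oplus(n-2)}(x_{i\ominus 1}c^{-1})$ is a canonical representation of length $n$ whose first and last letters have product $(cx_i)(x_{i\ominus 1}c^{-1})$; by the defining property of cyclic reducedness of the rotated word this product is not in $H$ (this needs a short check using Lemma~\ref{l:canrep}). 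Hence every element of $R^\circ$ is a weakly cyclically reduced conjugate of $r$ and therefore belongs to the half-symmetrized hull.

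For the reverse inclusion, I would first verify that $R^\circ$ is itself half-symmetrized, i.e., that $R^\circ$ is closed under taking weakly cyclically reduced conjugates; since $r\in R^\circ$ (take $i=0$ and $c=e$, noting $e\notin Hx_0^{-1}$ because $r$ is cyclically reduced of length $\ge 2$ so $x_0\notin H$), minimality of the half-symmetrized hull then forces the hull to be contained in $R^\circ$. So the crux is: if $w=b\,(cx_i\cdots x_{i\oplus(n-1)}c^{-1})\,b^{-1}$ is weakly cyclically reduced for some $b\in F_*$, then $w\in R^\circ$. Writing $g=cx_i$ (a letter of $L\cup G$ not in $H$) and $g'=x_{i\ominus 1}c^{-1}$, the element to be conjugated has canonical form $g\,x_{i\oplus 1}\cdots x_{i\oplus(n-2)}\,g'$ of length $n$. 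The standard computation for conjugation in free products with amalgamation (see \cite[\S IV.1]{Hesse} or \cite[\S V.11]{LS}) shows that any weakly cyclically reduced conjugate of a weakly cyclically reduced element of length $n\ge 2$ is again of length $n$ and is obtained by first cyclically permuting the canonical representation and then multiplying on the left and right by a single pair $c',c'^{-1}$ with $c'$ in the same factor as the new initial letter; tracking through how the letters $g,g'$ decompose back into the $x_j$'s (possibly absorbing the outer $c^{\pm1}$) yields exactly an expression of the form $c'x_j\cdots x_{j\oplus(n-1)}c'^{-1}$ with $c'\sim x_j$ and $c'\notin Hx_j^{-1}$, i.e., a member of $R^\circ$.

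The main obstacle is the bookkeeping in the last step: one must carefully use Lemma~\ref{l:canrep} to control how far the "boundary cancellation'' in the conjugate $bwb^{-1}$ can propagate, and to see that after all cancellation the remaining word is a cyclic rotation of $x_0\cdots x_{n-1}$ sandwiched by a single conjugating letter, with the membership conditions $c'\sim x_j$ and $c'\notin Hx_j^{-1}$ automatically satisfied precisely because $w$ was assumed weakly cyclically reduced (the latter condition being what prevents the boundary letters from having product in $H$, which would shorten the word). I would isolate this as a separate computation: given a canonical representation $y_1\cdots y_n$ and $b\in F_*$, analyze the reduced form of $by_1\cdots y_nb^{-1}$ by induction on $|b|$, showing that each unit decrease in length (from a cancellation $by_1\in H$ or similar) corresponds to passing to the next cyclic rotation, and the process terminates exactly when the result is weakly cyclically reduced, at which point at most one "half letter'' of conjugation survives on each side. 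Everything else is routine manipulation of canonical representations.
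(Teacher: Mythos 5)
Your proposal follows essentially the same route as the paper: one inclusion by checking that every element of $R^\circ$ is a weakly cyclically reduced conjugate of $r$ (the paper's Claims on weak cyclic reducedness and on $r^\circ=crc^{-1}$), the other by showing that $R^\circ$ is itself half-symmetrized via a minimal-length conjugator whose successive boundary cancellations correspond to cyclic rotations, which is exactly the paper's induction. The only point to tighten is the reducedness check in the first inclusion: the cases $c\in H$ (canonical representation of length $n$ ending in $x_{i\ominus 1}c^{-1}$) and $c\notin H$ (canonical representation of length $n+1$ ending in $c^{-1}$, with weak cyclic reducedness coming from $c^{-1}(cx_i)=x_i\notin H$) must be treated separately rather than as a single length-$n$ form.
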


\begin{proof} Recall that $\oplus$ is the operation of addition modulo $n$ on the set $n=\{0,\dots,n-1\}$. 

\begin{claim}\label{cl:wcr} Every element $w$ of the set $R^\circ$ is weakly cyclically reduced.
\end{claim}

\begin{proof} By the definition of $R^\circ$, there exist $i\in n$ and $c\in (L\cup G)\setminus Hx_i^{-1}$ such that $c\sim x_i$ and $w=cx_ix_{i\oplus 1}\cdots x_{i\oplus(n-1)}c^{-1}$. It follows that  $cx_i\sim x_i\not\sim x_{i\oplus1}$ and $cx_i\in (L\cup G)\setminus H$. 

If $c\notin H$, then $x_{i\oplus(n-1)}\not\sim x_{i}\sim c$ implies $x_{i\oplus(n-1)}\not\sim c$ and hence $(cx_i)x_{i\oplus1}\cdots x_{i\oplus(n-1)}c^{-1}$ is a canonical representation of $r'$. Since $c^{-1}(cx_i)=x_i\notin H$, the element $r'$ is weakly cyclically reduced. 

If $c\in H$, then the cyclic reducibility of $r$ ensures that $cx_i\sim x_i\not\sim x_{i\oplus(n-1)}\sim x_{i\oplus(n-1)}c^{-1}$ and hence the element $r'=(cx_i)x_{i\oplus1}\cdots x_{i\oplus(n-2)}(x_{i\oplus(n-1)}c^{-1})$ is (weakly) cyclically reduced.
\end{proof}

\begin{claim}\label{cl:crc} Every element $r^\circ\in R^\circ$ equals $crc^{-1}$ for some $c\in F_*$.
\end{claim}

\begin{proof} Given an element $r^\circ\in R^\circ$, find $i\in n$ and $y\in (L\cup G)\setminus Hx_i^{-1}$ such that $r^\circ =yx_ix_{i\oplus1}\cdots x_{i\oplus(n-1)}y^{-1}$. Observe that for the elements $d=x_0\cdots x_{i\oplus(n-1)}=x_0\cdots x_{i\ominus1}$ and $c=yd^{-1}\in F_*$ we have 
\begin{multline*}
r^\circ =yx_ix_{i\oplus 1}\cdots x_{i\oplus(n-1)}y^{-1}=y(x_0\cdots x_{i\ominus1})^{-1}x_0\cdots x_n(x_0\cdots x_{i\oplus(n-1)})y^{-1}=\\
yd^{-1}rdy^{-1}=crc^{-1}.
\end{multline*}
\end{proof}

\begin{claim}\label{cl:hs} The set $R^\circ$ is half-symmetrized. 
\end{claim}

\begin{proof}  By Claim~\ref{cl:wcr}, every element of $R^\circ$ is weakly cyclically reduced. Given any weakly cyclically reduced element $w\in\{crc^{-1}:c\in F_*,\; r\in R^\circ\}$, it remains to prove that $w\in R^\circ$.  Let $a\in F_*$ be an element of the smallest possible length such that $w=ax_{i}x_{i\oplus1}\cdots x_{i\oplus(n-1)}a^{-1}$ for some $i\in n$. Such an element $a$ exists by Claim~\ref{cl:crc}.

We claim that $|a|\le1$. To derive a contradiction, assume that $|a|>1$ and let $a_1\cdots a_k$ be a canonical representation of $a$. If $a_k\sim x_i$, then $a_k^{-1}\not\sim x_{i\ominus 1}=x_{i\oplus(n-1)}$, by the cyclic reducibility of $r$. The weak cyclic reducibility of $$w=a_1\cdots a_{k-1}(a_kx_i)x_{i\oplus1}\cdots x_{i\oplus(n-1)}a_k^{-1}\cdots a_1^{-1}$$ implies that $a_kx_i\in H$. Observe that the element $ax_i=a_1\dots a_{k-2}(a_{k-1}a_kx_i)$ has length $|ax_i|=k-1=|a|-1$ and $(ax_i)x_{i\oplus1}\cdots x_{i\oplus(n-1)}x_i(ax_i)^{-1}=w$, which contradicts the minimality of $|a|$. This contradiction shows that $|a|\le 1$ and hence $a\in L\cup G$.

If $a\sim x_i$ and $ax_i\notin H$, then $w=ax_ix_{i\oplus1}\cdots x_{i\oplus(n-1)}a^{-1}\in R^\circ$ by the definition of $R^\circ$.

If $a\sim x_i$ and $ax_i\in H$, then $ax_i\sim x_{i\oplus 1}\notin H$ and $ax_ix_{i\oplus1}\notin H$. Then $$w=(ax_i)x_{i\oplus 1}\cdots x_{i\oplus (n-1)}x_i(ax_i)^{-1}\in R^\circ$$ by the definition of $R^\circ$.

If $a\not\sim x_i$, then $a\notin H$ and $a\sim x_{i\oplus(n-1)}=x_{i\ominus1}$ by the cyclic reducibility of $r=x_0\cdots x_{n-1}$. Then $w=(ax_{i\ominus 1}^{-1})x_{i\ominus1}x_i\cdots x_{i\oplus(n-2)}(ax_{i\ominus1}^{-1})^{-1}$ and $ax_{i\ominus 1}^{-1}\sim x_{i\ominus 1}$. Since $(ax_{i\ominus1}^{-1})x_{i\ominus1}=a\notin H$, the element $w=(ax_{i\ominus 1}^{-1})x_{i\ominus1}x_i\cdots x_{i\oplus(n-2)}(ax_{i\ominus1}^{-1})^{-1}$ belongs to $R^\circ$ by the definition of $R^\circ$.
\end{proof} 

Claims~\ref{cl:wcr}, \ref{cl:crc} and \ref{cl:hs} imply that the set $R^\circ$ coincides with the half-symmetrized hull of $\{r\}$.
\end{proof}

Lemma~\ref{l:half-hull} implies the following description of the symmetrized hull of a cyclically reduced element, which can be compared with  \cite[Claim 1.7]{Shelah}.

\begin{corollary}\label{c:sym-hull} Let $r\subseteq L*_HG$ be a cyclically reduced element of length $n\ge 2$ and $x_1\cdots x_n$ be a canonical representation of $r$. The symmetrized hull of the set $\{r\}$ consists of the elements $$cx_i x_{i\oplus1}\cdots x_{i\oplus(n-1)}c^{-1}\quad\mbox{and}\quad dx_{i}^{-1} x_{i\ominus1}^{-1}\cdots x_{i\ominus(n-1)}^{-1}d^{-1}$$ where $i\in n$, $c\in (L\cup G)\setminus Hx_i^{-1}$, $c\sim x_i$, and $d\in(L\cup G)\setminus Hx_i$, $d\sim x_i^{-1}$.
\end{corollary}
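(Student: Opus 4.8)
The plan is to derive the corollary from Lemma~\ref{l:half-hull} via the observation that the symmetrized hull of $\{r\}$ is exactly the union of the half-symmetrized hull of $\{r\}$ and the half-symmetrized hull of $\{r^{-1}\}$.

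First I would record two routine facts. A union $R_1\cup R_2$ of half-symmetrized sets is half-symmetrized: every element is weakly cyclically reduced, and any weakly cyclically reduced element of $\{crc^{-1}:c\in F_*,\ r\in R_1\cup R_2\}$ belongs to whichever $R_k$ contains the chosen $r$. Moreover the inversion map $x\mapsto x^{-1}$ sends canonical representations to canonical representations and preserves weak cyclic reducibility, because $|x|=|x^{-1}|$ and $x_1x_n\notin H$ is equivalent to $x_n^{-1}x_1^{-1}=(x_1x_n)^{-1}\notin H$ (as $H$ is a subgroup); together with $(crc^{-1})^{-1}=cr^{-1}c^{-1}$ this shows that $R$ is half-symmetrized if and only if $R^{-1}$ is, and hence that the half-symmetrized hull of $\{r^{-1}\}$ is the image under inversion of the half-symmetrized hull of $\{r\}$.

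Granting these, let $S$ be the union of the half-symmetrized hulls of $\{r\}$ and of $\{r^{-1}\}$. It is half-symmetrized by the first fact and satisfies $S^{-1}=S$ by the second, so $S$ is symmetrized; since $r\in S$, it contains the symmetrized hull of $\{r\}$. Conversely, any symmetrized set containing $r$ also contains $r^{-1}$ and, being half-symmetrized, contains both half-symmetrized hulls, hence contains $S$. Thus $S$ is the symmetrized hull of $\{r\}$. Finally I would describe $S$ using Lemma~\ref{l:half-hull}: applied to $r$ with canonical representation $x_1\cdots x_n$ it yields the elements $cx_ix_{i\oplus1}\cdots x_{i\oplus(n-1)}c^{-1}$ with $c\sim x_i$, $c\notin Hx_i^{-1}$; and applied to $r^{-1}$, whose canonical representation $x_n^{-1}x_{n-1}^{-1}\cdots x_1^{-1}$ is again cyclically reduced of length $n\ge2$ (here one uses $x_1\not\sim x_n$, which holds as $r$ is cyclically reduced of length $\ge2$), it yields, after relabelling the letters as $x_i^{-1}$ with the cyclic index running in the reverse direction, the elements $dx_i^{-1}x_{i\ominus1}^{-1}\cdots x_{i\ominus(n-1)}^{-1}d^{-1}$ with $d\sim x_i^{-1}$, $d\notin Hx_i$. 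The union of these two families is the asserted description.

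The only step needing genuine care is the last relabelling — tracking the cyclic indexing of the canonical representation of $r^{-1}$ so that the side conditions come out as $d\sim x_i^{-1}$ and $d\notin Hx_i$ rather than their mirror images; everything else is immediate from Lemma~\ref{l:half-hull} and the subgroup property of $H$.
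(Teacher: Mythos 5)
Your proposal is correct and follows the route the paper intends: the paper derives Corollary~\ref{c:sym-hull} directly from Lemma~\ref{l:half-hull} without writing out the details, and your argument (symmetrized hull $=$ half-symmetrized hull of $\{r\}$ union that of $\{r^{-1}\}$, the latter obtained by applying the lemma to the reversed-inverted canonical representation) is exactly the natural filling-in of that implication, including the correct bookkeeping that gives $d\sim x_i^{-1}$ and $d\notin Hx_i$.
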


Given elements $a,b,c\in F_*$, we write $a\ddoteq bc$ if $a=bc$ in $F_*$ and $|b|+|c|-1\le|a|\le|b|+|c|$.  This happens if and only if $a=bc$ and $b_nc_1\notin H$ for some (equivalently, every) canonical representations $b_1\cdots b_n$ and $c_1\cdots c_m$ of the elements $b$ and $c$, respectively.

Let $R$ be a subset of $F_*$. An element $p\in F_*$ is called an {\em $R$-piece of an element} $r\in F_*$ if there exists a element $q\in R\setminus\{r\}$ such that $r\ddoteq px$ and $q\ddoteq py$ for some elements $x,y\in F_*$.

Let $\lambda<1$ be a positive real number.  A subset $R\subseteq A$ is defined to satisfy {\em the condition} $C'(\lambda)$ if $\frac1\lambda<\min\{|r|:r\in R\}$ and every $R$-piece $p$ of every element $r\in R$ has length $|p|<\lambda |r|$. The following lemma is one of principal results of the small cancellation theory and can be found in \cite[11.2]{LS}, \cite[1.9]{Shelah} or \cite[Haupt-Lemma IV.1.4]{Hesse}. 

\begin{lemma}\label{l:large-piece} Let $N$ be the normal subgroup of $F_*$ generated by a symmetrized set $R\subseteq F^*$ satisfying the condition $C'(\lambda)$ for some $\lambda\le\frac16$. For every $w\in N\setminus\{e\}$ there exist elements $u,s,v,t\in F_*$ and a cyclically reduced element $r\in R$ such that $w\equiv usv$, $r\equiv st$ and $|s|>(1-3\lambda)|r|$.
\end{lemma}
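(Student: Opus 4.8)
The statement is a version of Greendlinger's Lemma for the amalgamated product $F_*=L*_HG$, and the plan is to deduce it from the theory of van Kampen (Lyndon--van Kampen) diagrams over $F_*$, adapting the free-group argument of \cite[\S V.4]{LS} to amalgams as in \cite[\S V.11]{LS} and \cite[\S IV.1]{Hesse}. So the first thing I would do is set up the right notion of a \emph{reduced $R$-diagram over $F_*$}: a finite, connected, simply connected planar $2$-complex whose $1$-cells are labelled by elements of $L\cup G$, with the defining relations of $L*_HG$ built in (syllable multiplications inside $L$ or inside $G$, and the identification of $H$) so that the perimeter word of each region is a canonical representation of an element of $R$ and all perimeter lengths are measured by the length function $|\cdot|$ of Lemma~\ref{l:canrep}; "reduced" should mean that no two regions sharing an edge carry mutually inverse boundary labels, which uses that $R$ is symmetrized. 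Since $N$ is the normal closure of $R$, for a given $w\in N\setminus\{e\}$ such a diagram $D$ exists with boundary cycle $\partial D$ spelling $w$; choosing $D$ with the fewest regions (and, after deleting spurs, with reduced boundary word) I may assume it is reduced and that $\partial D$ is a canonical representation of $w$.

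Next comes the combinatorial heart. In a reduced diagram, every maximal interior arc of a region $\Pi$ is, by construction, a common initial segment of a cyclic reading of the two relators bounding it, hence an $R$-piece in the sense defined just before the lemma; so by $C'(\lambda)$ its length is $<\lambda|r|$, where $r\in R$ is the cyclically reduced boundary label of $\Pi$. A discrete Gauss--Bonnet / Euler-characteristic count on $D$ (the total curvature is positive, while each interior arc of a region $\Pi$ ``costs'' more than a $\lambda$-fraction of $|\partial\Pi|$, and $\lambda\le\frac16$) forces a boundary region $\Pi$ meeting the interior of $D$ in few arcs; the degenerate cases (a diagram consisting of a single region, or one whose boundary has spurs) are dealt with directly. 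For such a $\Pi$, writing $\partial\Pi\equiv p_1s_1p_2s_2\cdots$ with $p_i$ the interior arcs and $s_j$ the complementary arcs lying on $\partial D$, the sum of the $|p_i|$ is less than $3\lambda|r|$, so there is an exposed subpath $s$ of $\partial\Pi$ with $r\equiv st$ and $|t|<3\lambda|r|$, i.e. $|s|>(1-3\lambda)|r|$; reading $\partial D$ around $\Pi$ then displays $w\equiv usv$ for suitable $u,v\in F_*$, as required.

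The hard part, as indicated, is not the curvature estimate but two pieces of bookkeeping: first, the faithful transfer of van Kampen diagram theory from free groups to free products with amalgamation — one must work throughout at the level of syllables, absorb the amalgamation relations so that "reduced" is the useful notion, and verify that interior arcs really are $R$-pieces to which the $C'(\lambda)$ bound applies; second, the finer analysis (essentially Greendlinger's) needed to guarantee that the exposed portion of an extremal region forms a \emph{single} subpath $s$ of length $>(1-3\lambda)|r|$, rather than merely a total exposed length $>(1-3\lambda)|r|$ spread over several arcs, which is what dictates the precise constant and the hypothesis $\lambda\le\frac16$. Both are carried out in full in \cite[\S V.11]{LS} (see also \cite[Haupt-Lemma IV.1.4]{Hesse} and \cite[Fact 1.9]{Shelah}), so in a paper the shortest rigorous route is simply to invoke that result.
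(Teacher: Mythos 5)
Your proposal is correct and matches the paper's treatment: the paper gives no proof of Lemma~\ref{l:large-piece} at all, but simply invokes it as a standard result of small cancellation theory over free products with amalgamation, citing \cite[V.11.2]{LS}, \cite[1.9]{Shelah} and \cite[Haupt-Lemma IV.1.4]{Hesse} — exactly the route you recommend at the end. Your diagrammatic sketch (reduced $R$-diagrams over $L*_HG$, interior arcs as $R$-pieces bounded by $C'(\lambda)$, and the curvature count producing a boundary region with a single exposed arc of length $>(1-3\lambda)|r|$) is a faithful outline of the argument carried out in those references, with the genuinely delicate points correctly identified and deferred to them.
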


\begin{lemma}\label{l:qLG} Let $R\subseteq F_*$ be a symmetrized subset satisfying the condition $C'(\lambda)$ for some positive $\lambda\le\frac16$. Let  $N$ be the smallest normal subgroup containing $R$, $F_{**}=F_*/N$ be the quotient group and $q:F_*\to F_{**}$ is a quotient homomorphism. Then
\begin{enumerate}
\item every element $w\in N\setminus\{e\}$ has length $|w|>\frac1\lambda-3\ge 3$;
\item the restriction $q{\restriction}_{L\cup G}$ is injective;
\item $q[H]=q[L]\cap q[G]$.
\end{enumerate}
\end{lemma}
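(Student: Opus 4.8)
The plan is to deduce all three items from Lemma~\ref{l:large-piece}, which is the only substantial ingredient: once item~(1) is in hand, items~(2) and~(3) follow from it by an elementary length estimate, using that every element of $L\cup G$ has length at most $1$.

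First I would prove item~(1). Let $w\in N\setminus\{e\}$. Applying Lemma~\ref{l:large-piece}, I obtain elements $u,s,v,t\in F_*$ and a cyclically reduced $r\in R$ with $w\equiv usv$, $r\equiv st$ and $|s|>(1-3\lambda)|r|$. From $w\equiv usv$ we get $|w|=|u|+|s|+|v|\ge|s|$. Since $R$ satisfies the condition $C'(\lambda)$, we have $|r|>\tfrac1\lambda$; and $1-3\lambda>0$ because $\lambda\le\tfrac16<\tfrac13$. Hence $|w|\ge|s|>(1-3\lambda)|r|>(1-3\lambda)\tfrac1\lambda=\tfrac1\lambda-3$, and $\tfrac1\lambda-3\ge3$ follows once more from $\lambda\le\tfrac16$.

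Next, item~(2). Suppose $x,y\in L\cup G$ satisfy $q(x)=q(y)$, i.e.\ $xy^{-1}\in N$. Every element of $L\cup G$ has length at most $1$ (its one-letter canonical representation, or the empty word for $e$), so $|xy^{-1}|\le|x|+|y^{-1}|\le 2<3$. If $xy^{-1}\ne e$, item~(1) would force $|xy^{-1}|>3$, a contradiction; hence $x=y$, so $q{\restriction}_{L\cup G}$ is injective. For item~(3), the inclusion $q[H]\subseteq q[L]\cap q[G]$ is immediate from $H\subseteq L$ and $H\subseteq G$. Conversely, let $z\in q[L]\cap q[G]$ and pick $\ell\in L$, $g\in G$ with $z=q(\ell)=q(g)$; then $\ell g^{-1}\in N$ and, exactly as above, $|\ell g^{-1}|\le 2$, so item~(1) forbids $\ell g^{-1}\in N\setminus\{e\}$. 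Thus $\ell g^{-1}=e$, i.e.\ $\ell=g\in L\cap G=H$ (using that in $F_*$ the intersection $L\cap G$ coincides with the image of $H$), and therefore $z=q(\ell)\in q[H]$.

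The only point demanding real work is Lemma~\ref{l:large-piece} itself, which we are entitled to assume from the small cancellation theory; everything else here is a short length computation, so I anticipate no further obstacle.
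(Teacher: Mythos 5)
Your proposal is correct and follows essentially the same route as the paper: item (1) via Lemma~\ref{l:large-piece} together with $|r|>\tfrac1\lambda$ and $\lambda\le\tfrac16$, and items (2) and (3) via the observation that the relevant elements have length at most $2<3$, forcing them to be trivial by item (1). No gaps.
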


\begin{proof} 1. Given any element $w\in N\setminus\{e\}$, apply Lemma~\ref{l:large-piece} and find elements $u,s,v,t\in F_*$ and a cyclically reduced element $r\in R^\circ$ such that $w\equiv usv$, $r\equiv st$ and $|s|>(1-3\lambda)|r|$. Taking into account that $|r|>1/\lambda$, we conclude that
$$|w|\ge |s|>(1-3\lambda)|r|>(1-3\lambda)/\lambda=\tfrac1\lambda-3\ge 6-3=3.$$
\smallskip

2. To show that the restriction $q{\restriction}_{L\cup G}$ is injective, take any  elements $x,y\in L\cup G$. Assuming that $xN=q(x)=q(y)=yN$, we conclude that $y^{-1}x\in N$ and hence $y^{-1}x=e$ because $|y^{-1}x|\le 2<3$. This shows that $q{\restriction}_{L\cup G}$ is injective.
\smallskip

3. It is clear that $q[H]=q[L\cap G]\subseteq q[L]\cap q[G]$. To show that $q[L]\cap q[G]$, take any element $y\in q[L]\cap q[G]$ and find elements $l\in L$ and $g\in G$ such that $lN=y=gN$. Then $g^{-1}l\in N$. Since $|g^{-1}l|\le 2<3$, the first item ensures that $g^{-1}l=e$ and hence $g=l\in L\cap G=H$ and $y=q[l]\in q[H]$.
\end{proof}

\section{The malnormality in free products with amalgamation}\label{s:malnormal}

Let $H$ be a subgroup of a group $G$. An element $g\in G$ is called {\em $H$-malnormal} if $g\notin H$ and $H\cap gHg^{-1}=\{e\}$ where $e$ is the identity element of the group $H\subseteq G$. A subgroup $H$ of a group $G$ is called {\em malnormal in $G$} if every element $x\in G\setminus H$ is $H$-malnormal.

Let $L,H,G$ be groups such that $L\cap G=H$, and let $F_*=L*_HG$ be the free product of $L$ and $G$ with amalgamation $H$. 

An element $r\in F_*\setminus\{e\}$ of even length $n=|r|$ is called {\em half $H^-$-separated} if there exist a canonical representation $x_0\cdots x_{n-1}$ of $r$ and $\e\in\{0,1\}$ such that for every distinct even numbers $i,j\in n$ we have $x_{\e\oplus i}^{-1}\notin Hx_{\e\oplus j}H$.
A subset $R\subseteq F_*$ is called {\em half $H^-$-separated} if each element $r\in R$ has positive even length and is half $H^-$-separated.

\begin{lemma}\label{l:malnormal} Let $L,G,H$ be groups with $L\cap G=H$ and $R\subseteq F*_H G$ be a half $H^-$-separated set whose symmetrized hull $R^\circ$ satisfies the condition $C'(\lambda)$ for some positive $\lambda<\frac16$ such that $(1-6\lambda)|r|>4$ for every cyclically reduced element $r\in R^\circ$. Let $N$ be the smallest normal subgroup of $F_*=L*_HG$ containing the set $R$.  If the subgroup $H$ is malnormal in $L$, then for every $x\in F_*\setminus GN$ and $y\in G\setminus\{e\}$ we have $xyx^{-1}\notin GN$.
\end{lemma}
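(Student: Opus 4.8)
The plan is to argue by contradiction: assume there are $x\in F_*\setminus GN$ and $y\in G\setminus\{e\}$ with $xyx^{-1}\in GN$, and derive a structural contradiction using the small cancellation machinery, the malnormality of $H$ in $L$, and the half $H^-$-separation of $R$. Write $xyx^{-1}=gw$ for some $g\in G$ and $w\in N$. First I would reduce to the case where $x$ is chosen of minimal length in its coset $xN$, and moreover weakly cyclically reduced modulo the obvious cancellations, so that a canonical representation $x\equiv x_1\cdots x_k$ is under control; in particular $x\notin G$ since $x\notin GN\supseteq G$, so $k\ge 1$ and, after absorbing an initial $H$-factor if needed, the first syllable $x_1$ may be assumed to lie in $L\setminus H$ (otherwise $x_1\in G$ and we could pull it into the $G$-coset, contradicting minimality unless $x\in G$). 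Then $xyx^{-1}g^{-1}\in N\setminus\{e\}$ — it is nontrivial because $xyx^{-1}\notin GN$ would fail if $xyx^{-1}g^{-1}=e$, i.e. $xyx^{-1}=g\in G\subseteq GN$. Now apply Lemma~\ref{l:large-piece} (via Lemma~\ref{l:qLG}) to $w':=xyx^{-1}g^{-1}\in N\setminus\{e\}$: there are $u,s,v,t\in F_*$ and a cyclically reduced $r\in R^\circ$ with $w'\equiv usv$, $r\equiv st$, and $|s|>(1-3\lambda)|r|$.

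The heart of the argument is to locate this long common subword $s$ inside a canonical representation of $w'=xyx^{-1}g^{-1}$. Since $|y|\le 1$ and $|g^{-1}|\le 1$, the bulk of $w'$ consists of the two ``mirrored'' copies of $x$ and $x^{-1}$; after reducing, $w'$ has a canonical form that looks like $x_1\cdots x_k \cdot (\text{short middle}) \cdot x_k^{-1}\cdots x_1^{-1}$ up to $H$-adjustments and possible cancellation of a few syllables at the $xy$, $yx^{-1}$, $x^{-1}g^{-1}$ junctions. The key numerical point is that $s$ is so long — length $>(1-3\lambda)|r|$, and the hypothesis $(1-6\lambda)|r|>4$ gives $|s|>\tfrac12|r|+2$ — that $s$ cannot fit entirely in one of the two ``legs'' $x_1\cdots x_k$ or $x_k^{-1}\cdots x_1^{-1}$ without forcing $r$ itself to have an internal cancellation or periodicity; more precisely, a subword of $r$ of length $>\tfrac12|r|$ occurring in both legs would, by Corollary~\ref{c:sym-hull} and the uniqueness of canonical representations (Lemma~\ref{l:canrep}), read off two overlapping cyclic conjugates of $r^{\pm1}$, and matching the two occurrences syllable-by-syllable through $H$-conjugation (Lemma~\ref{l:canrep}) produces, at the position straddling the short middle of $w'$, an equality of the form $x_i^{-1}\in H x_j H$ for two distinct even-indexed syllables of a cyclically reduced element of $R^\circ$. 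This is exactly what half $H^-$-separation forbids (after checking that the relevant syllables of $r\in R^\circ$ inherit the separation property from $R$ — here one uses the explicit description in Corollary~\ref{c:sym-hull} of $R^\circ$ as cyclic conjugates of elements of $R$ and their inverses, which only permutes and inverts the original syllables and conjugates them by a single element of $L\cup G$).

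Once $s$ must straddle the short middle of $w'$, the portion of $s$ on the $x$-side and the portion on the $x^{-1}$-side give, via $r\equiv st$, a syllable of $r$ lying in $L\setminus H$ that is flanked on both sides (after the $H$-adjustments from Lemma~\ref{l:canrep}) by $H$-cosets coming from the two copies of $x$; comparing these forces an element of $H$ to be conjugated into $H$ by a syllable of $x$ lying in $L\setminus H$, so that $H\cap \xi H\xi^{-1}\ne\{e\}$ for some $\xi\in L\setminus H$. This contradicts the malnormality of $H$ in $L$, completing the proof. I expect the main obstacle to be the bookkeeping in the second paragraph: carefully tracking how many syllables cancel at the three junctions $x\cdot y$, $y\cdot x^{-1}$, $x^{-1}\cdot g^{-1}$ (each junction kills at most one syllable and may merge two into an $H$-element), verifying that after these reductions $w'$ still has length roughly $2k$ with a symmetric ``palindromic-up-to-inversion'' structure, and then showing the length budget $|s|>(1-3\lambda)|r|$ together with $(1-6\lambda)|r|>4$ genuinely forces the straddling configuration rather than allowing $s$ to hide in a leg. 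The malnormality and the half $H^-$-separation then enter only at the very end, each to kill one of the two ways the straddling overlap could otherwise be consistent.
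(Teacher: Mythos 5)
Your plan has a genuine gap at its central step: ruling out the positions where the long Greendlinger subword $s$ lies inside one of the two ``legs''. You assert that such a position would force ``internal cancellation or periodicity'' of $r$ and violate half $H^-$-separation; this is not so. Nothing prevents an arbitrary element $x\in F_*\setminus GN$ from containing more than half of a relator of $R^\circ$ as a subword, and an occurrence of a long piece of one $r$ inside a single copy of $x$ contradicts no separation or small-cancellation property (those only constrain pieces shared by \emph{two distinct} relators). What actually excludes these configurations is minimality: the paper takes $x$ of minimal length among \emph{all} elements of $F_*\setminus GN$ conjugating some element of $G\setminus\{e\}$ into $GN$, and in Claims~\ref{cl:more} and \ref{cl:less} performs a Dehn-type replacement of the long subword by the complementary part of $r$ (using $s'\in(s''t)^{-1}N$ and $|s''t|<\tfrac12|r|$), producing a strictly shorter counterexample. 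Your minimality hypothesis (``minimal length in its coset $xN$'') is never invoked at this point, and it is in any case too weak for the leg on the $x^{-1}z^{-1}$ side: there the shortened word lies in the coset $x^{-1}z^{-1}N$, not in $xN$, and yields a shorter counterexample only after inverting and conjugating $y$ from the other side, so one needs minimality over all counterexamples with $y$ and the $G$-part allowed to vary. The same stronger minimality is what justifies pushing the first syllable of $x$ into $L\setminus H$ (your ``pull it into the $G$-coset'' step changes the coset) and the last syllable, which you do not treat at all although it governs the shape of the middle of $xyx^{-1}$.

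In addition, the roles of the two hypotheses are swapped in your plan. In the straddling configuration, matching the middle five syllables of $s$ with syllables of $r$ via Lemma~\ref{l:canrep} yields only double-coset relations, namely $\sigma_1\in Hx_{k-1}H=H\sigma_5^{-1}H$ and $\sigma_2\in Hx_kH=H\sigma_4^{-1}H$ for syllables of $r$ at distinct positions of equal parity; this contradicts the half $H^-$-separatedness of $R$ (transported to $R^\circ$ via Corollary~\ref{c:sym-hull}), not malnormality --- no conjugation of a nontrivial element of $H$ into $H$ by an element of $L\setminus H$ is produced, so the final contradiction you describe does not materialize. Malnormality of $H$ in $L$ is needed earlier, to control the junction $x\cdot y\cdot x^{-1}$: together with $x_k\in L\setminus H$ (itself a consequence of minimality) it guarantees that $xyx^{-1}$ has canonical form $x_1\cdots x_k\check yx_k^{-1}\cdots x_1^{-1}$ with $\check y=y$ or $\check y=x_kyx_k^{-1}\notin H$, i.e.\ that the collapse at the middle stops after at most one merge. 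Your bookkeeping claim that ``each junction kills at most one syllable'' is exactly the point where this is used and is left unjustified in the proposal.
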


\begin{proof} To derive a contradiction, assume that there exists $x\in F_*\setminus GN$ such that $xyx^{-1}\in zN$ for some $y,z\in G\setminus\{e\}$. We can assume that $x$ has the smallest possible length. 

\begin{claim}\label{cl:7} Every $w\in N\setminus \{e\}$ has length $|w|\ge 7$.
\end{claim}

\begin{proof} By Lemma~\ref{l:large-piece}, there exist elements $u,s,v,t\in F_*$ and a cyclically reduced element $r\in R^\circ$ such that $w\equiv usv$, $r\equiv st$, $|s|>(1-3\lambda)|r|$. Since $R^\circ$ satisfies the condition $C'(\lambda)$, $|r|>1/\lambda>6$.  Since the length of the cyclically reduced element $r$ is an even number, $|r|\ge 8$. By our assumption, $(1-6\lambda)|r|>4$ and hence
$$|w|\ge |s|>(1-3\lambda)|r|=\tfrac12|r|+\tfrac12(1-6\lambda)|r|>4+\tfrac124=6.$$
\end{proof}

\begin{claim} $|x|\ge 3$.
\end{claim}

\begin{proof} Assuming that $|x|\le 2$, we conclude that the element $xyx^{-1}z^{-1}\in N$ has length $|xyx^{-1}z^{-1}|\le 2|x|+2\le 6$, which contradicts Claim~\ref{cl:7}.
\end{proof}

%If $|x|\le 1$, then $x\notin GN$ implies $x\in L\setminus H$. Since $|xyx^{-1}z^{-1}|\le 4$, Claim~\ref{cl:7} implies that $xyx^{-1}z^{-1}=e$ and hence $xyx^{-1}=z$ and $y=x^{-1}zx$. It follows from $1=|z|=|xyx^{-1}|$ and $x\in L\setminus H$ that $y\in H$. It follows from $1=|y|=|x^{-1}zx|$ and $x\in L\setminus H$ that $z\in H$. Then $e\ne z=xyx^{-1}\in H\cap xHx^{-1}$, which contradicts the malnormality of the subgroup $H$ in $L$.  This contradiction shows that $|x|\ge 2$. 

Let $x_1\cdots x_n$ be a canonical representation of $x$. 

\begin{claim} $x_n\in L\setminus H$.
\end{claim}

\begin{proof} Assuming that $x_n\in G\setminus H$, we conclude that the element $\check y=x_nyx_n^{-1}$ belongs to $G\setminus \{e\}$. Assuming that the element $\check x=x_1\cdots x_{n-1}$ belongs to $GN$, we conclude that $x=\check xx_n\in GNG=GGN=GN$, which contradicts the choice of $x$. This contradiction shows that $\check x\notin GN$. Since $\check x\check y\check x^{-1}=xyx^{-1}\in zN$ and $|\check x|=n-1<|x|$, we obtains a contradiction with the minimality of the length of $x$. 
\end{proof}

\begin{claim}\label{cl:x1} $x_1\in L\setminus H$.
\end{claim}

\begin{proof} Assuming that $x_1\notin L\setminus H$, we conclude that $x_1\in G\setminus H$. Consider the element $\hat x=x_2\dots x_n$. Assuming that $\hat x\in GN$, we conclude that $x=x_1\hat x\in GGN=GN$, which contradicts the choice of $x$. This contradiction shows that $\hat x\notin GN$. Taking into account that $\hat xy\hat x^{-1}=x_1^{-1}xyx^{-1}x_1\in x_1^{-1}zx_1N$, $x_1^{-1}zx_1\in G\setminus\{e\}$ and $|\hat x|=n-1<|x|$, we obtains a contradiction with the minimality of $|x|$. This contradiction shows that $x_1\in L\setminus H$.
\end{proof}

Let $$k=\begin{cases}
n&\mbox{if $y\notin H$},\\
n-1&\mbox{if $y\in H$},
\end{cases}
$$ and
$$\check y=\begin{cases}y&\mbox{if $y\notin H$},\\
x_nyx_{n}^{-1}&\mbox{if $y\in H$}.
\end{cases}
$$
The $H$-malnormality of $x_n\in L\setminus H$ implies that $x_1\dots x_k\check yx_k^{-1}\cdots x_1^{-1}$ is a canonical representation of the element $xyx^{-1}$. If $z\notin H$, then Claim~\ref{cl:x1} ensures that $x_1\dots x_k\check y x_k^{-1}\cdots x_1^{-1}z^{-1}$ is a canonical representation of $xyx^{-1}z^{-1}$ witnessing that $|xyx^{-1}z^{-1}|=2k+2$. If $z\in H$, then  $x_1\dots x_k\check y x_k^{-1}\cdots x_2^{-1}(x_1^{-1}z^{-1})$ is a canonical representation of $xyx^{-1}z^{-1}$, witnessing that $|xyx^{-1}z^{-1}|=2k+1$.

Since $xyx^{-1}z^{-1}\in N$, we can apply Lemma~\ref{l:large-piece} and find elements $u,s,v,t\in F_*$ and a cyclically reduced element $r\in R^\circ$ such that $xyx^{-1}z^{-1}\equiv usv$, $r\equiv st$, and $|s|>(1-3\lambda)|r|$. 

By our assumption, $(1-6\lambda)|r|>4$ and hence $$3\lambda|r|<\tfrac12|r|-2.$$

%1. First we assume that $y,z\notin H$ and hence $x_1\dots x_ny^{-1}x_n^{-1}\cdots x_1^{-1}z^{-1}$ is a canonical representation of $xy^{-1}xz^{-1}$.

\begin{claim}\label{cl:more} $|u|+|s|\ge k+3$.
\end{claim}

\begin{proof} To derive a contradiction, assume that $|u|+|s|\le k+2$. Write $s$ as $s\equiv s's''$ for some $s',s''\in F_*$ such that $|u|+|s'|=\min\{k,|u|+|s|\}$ and hence $|s''|\le 2$. It follows from $s's''t\equiv st\equiv r\in N$ that  $s'\in (s''t)^{-1}N$ and 
$$|s''t|=|r|-|s'|=|r|-|s|+|s''|< |r|-(1-3\lambda)|r|+2=3\lambda|r|+2<(\tfrac12|r|-2)+2=\tfrac12|r|.$$ The element $r$ of length $|r|>\frac1\lambda>6$ is cyclically reduced and hence has even length $|r|\ge8$. Then the inequality $|s''t|<\tfrac12|r|$ implies $|s''t|\le \tfrac12|r|-1$ and hence  $$|s'|=|r|-|s''t|\ge |r|-(\tfrac12|r|-1)=\tfrac12|r|+1\ge |s''t|+2.$$
By Lemma~\ref{l:canrep}, the inequality $|u|+|s'|\le k\le|x|$ and the equality  $xyx^{-1}z^{-1}\equiv us's''v$  imply that $x_1\cdots x_n=x\equiv us'\hat x$ for some $\hat x\in F_*$. It follows from $s'\in (s''t)^{-1}N$ that $x\equiv us'\hat x\in u(s''t)^{-1}\hat xN$ and hence $u(s''t)^{-1}\hat xy\hat x^{-1}(s''t)u^{-1}\in xyx^{-1}N=zN$. Assuming that $u(s''t)^{-1}\hat x\in GN$, we conclude that $x=us'\hat x\in u(s''t)^{-1}\hat xN\in GNN=GN$, which contradicts the choice of $x$. Finally, observe that $|u(s''t)^{-1}\hat x|\le|u|+|s''t|+|\hat x|\le  |u|+(|s'|-2)+|\hat x|=|us'\hat x|-2=|x|-2$, which contradicts the minimality of $|x|$.
\end{proof}

\begin{claim}\label{cl:less} $|u|\le k-2$.
\end{claim}

\begin{proof} To derive a contradiction, assume that $|u|\ge k-1$. Since $|s|>(1-3\lambda)|r|=\frac12|r|+\frac12(1-6\lambda)|r|>4+2=6$ and $x_1\cdots x_k\check yx_k^{-1}\cdots x_1z^{-1}\equiv usv$, we can write $s$ as $s\equiv \sigma s'$ where $|u|+|\sigma|=\max\{k+1,|u|\}$ and hence $|\sigma|\le 2$. It follows from $\sigma s't=st=r\in N$ that $s'\in (t\sigma)^{-1}N$. Observe that
\begin{multline*}
|t\sigma|\le |t|+|\sigma|+|s'|-|s'|=|\sigma s't|-|s'|=|r|-(|s|-|\sigma|)=|r|-|s|+|\sigma|<\\
|r|-(1-3\lambda)|r|+2=3\lambda|r|+2<(\tfrac12|r|-2)+2=\tfrac12|r|.
\end{multline*}
Since the $r$ has even length $|r|$, the strict inequality $|t\sigma|<\tfrac12|r|$ implies $|t\sigma|\le\tfrac12|r|-1$. The cyclic reducibility of $r\equiv \sigma s't$ implies that $|s'|+|t\sigma|=|r|$ and hence  
$$|s'|=|r|-|t\sigma|\ge |r|-\tfrac12|r|+1=\tfrac12|r|+1\ge|t\sigma|+2.$$
It follows from $x_1\cdots x_k\check yx_k^{-1}\cdots x_1^{-1}z^{-1}=xyx^{-1}z^{-1}\equiv u\sigma s'v$ and $|u\sigma|=\max\{k+1,|u|\}$ that $x^{-1}z^{-1}=x_n^{-1}\cdots x_1^{-1}z^{-1}\equiv \hat x s'v$ for some $\hat x\in F_*$.
Consider the element $\check x=\hat x(t\sigma)^{-1}v$ of $F_*$.
 It follows that $\check x=\hat x(t\sigma)^{-1}v\in \hat x s'N v=\hat xs'vN=x^{-1}z^{-1}N$ and hence $\check x^{-1}y\check x\in zxyx^{-1}z^{-1}N=zzNz^{-1}N=zN$. Assuming that $\check x\in GN$, we conclude that $x^{-1}\in \check x zN\in GNGN=GN$, which contradicts the choice of $x$. This contradiction shows that $\check x\notin GN$. 
Finally, observe that $$|\check x|\le|\hat x|+|t\sigma|+|v|\le |\hat x|+|s'|-2+|v|=|\hat xs'v|-2=|x^{-1}z^{-1}|-2\le |x|+1-2<|x|,$$
which contradicts the minimality of $|x|$.
\end{proof}

By Claims~\ref{cl:more} and \ref{cl:less}, $|u|\le k-2$ and $|u|+|s|\ge k+3$.  Write $s$ as $s\equiv s'\sigma s''$ for some $s',\sigma, s''\in F_*$ such that $|u|+|s'|=k-2$ and $|u|+|s'|+|\sigma|=k+3$. The equalities $x_1\cdots x_k\check yx_k^{-1}\cdots x_1^{-1}z^{-1}=xyx^{-1}z^{-1}\equiv usv\equiv us'\sigma s''v$ and Lemma~\ref{l:canrep} imply that $\sigma=h_0x_{k-1}x_k\check yx_k^{-1}x_{k-1}^{-1}h_5$ for some $h_0,h_5\in H$. Let $\sigma=\sigma_1\sigma_2\sigma_3\sigma_4\sigma_5$ be a canonical representation of $\sigma$.  Lemma~\ref{l:canrep} implies that $\sigma_1=h_0x_{k-1}h_1^{-1}$, $\sigma_2=h_1x_kh_2^{-1}$, $\sigma_3=h_2\check yh_3^{-1}$, $\sigma_4=h_3x_k^{-1}h_4^{-1}$, $\sigma_5=h_4x_{k-1}^{-1}h_5^{-1}$ for some $h_1,h_2,h_3,h_4\in H$. 
It follows that
$\sigma_1\in Hx_{k-1}H=H(x_{k-1}^{-1})^{-1}H=H(h_4^{-1}\sigma_5^{-1}h_5)^{-1}H=H\sigma_5^{-1}H$ and
$\sigma_2\in Hx_kH=H\sigma_4^{-1}H$,
which contradicts the half $H^-$-separatedness of $R$ and the equality $r\equiv st\equiv s'\sigma s''t$. 
\end{proof}

\section{A special set in $F_*$ satisfying the condition $C'(\lambda)$}\label{s:C'}

In this section we establish the condition $C'(\lambda)$ for some special subset of a free product with amalgamation.

Let $H$ be a subgroup of a group $G$. A sequence $(g_i)_{i\in n}$ in $G\setminus H$ is defined to be 
\begin{itemize}
\item {\em $H$-malnormal} if for every $i\in n$ the element $g_i$ is $H$-malnormal;
\item {\em $H$-separated} if $g_i\notin Hg_jH$ for any distinct numbers $i,j\in n$;
\item {\em $H^\pm$-separated} if $g_i\notin (Hg_jH)\cup (Hg_j^{-1}H)$ for any distinct numbers $i,j\in n$.
\end{itemize}

\begin{lemma}\label{l:C'} Let $L,H,G$ be groups such that $L\cap G=H$. Let $n\ge 3$ be a positive integer, $(a_i)_{i\in n}$ be an $H^\pm$-separated $H$-malnormal sequence in $L\setminus H$ and $(x_i)_{i\in n}$ be an $H^\pm$-separated sequence in $G\setminus H$. For every $H$-malnormal element $a\in L\setminus H$, the symmetrized hull $R^\circ$ of the set
$$R=\{a_0xa_1x\cdots a_{n-1}x:x\in G\setminus H\}\cup\{x_0ax_1a\cdots x_{n-1}a\}\subseteq L*_HG$$satisfies the condition $C'(\lambda)$ for every $\lambda>\frac{5}{2n}$.
\end{lemma}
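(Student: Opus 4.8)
## Proof plan for Lemma~\ref{l:C'}

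The plan is to verify the two requirements of the condition $C'(\lambda)$ directly: that $\frac1\lambda < \min\{|r| : r\in R^\circ\}$, and that every $R^\circ$-piece of every element of $R^\circ$ is short relative to the element containing it. The first requirement is immediate: each generating relator $a_0xa_1x\cdots a_{n-1}x$ and $x_0ax_1a\cdots x_{n-1}a$ has length exactly $2n$ (the factors alternate between $L\setminus H$ and $G\setminus H$, so the displayed products are already canonical representations), and by Corollary~\ref{c:sym-hull} every element of $R^\circ$ has the same length $2n$ as the cyclically reduced element it is conjugate to; since $\lambda > \frac{5}{2n} > \frac{1}{2n}$ we get $\frac1\lambda < 2n = \min\{|r|:r\in R^\circ\}$. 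So the real content is the piece estimate: every $R^\circ$-piece $p$ must satisfy $|p| < \lambda\cdot 2n$, and since $\lambda > \frac{5}{2n}$ it suffices to show every piece has length at most $5$ (indeed $\le 4$ would already do, but $5$ is the bound the hypothesis is tuned to).

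The main step is therefore: if $p$ is a common "prefix" (in the $\ddoteq$ sense) of two distinct elements $r,q$ of $R^\circ$, then $|p|\le 5$. Here I would use Corollary~\ref{c:sym-hull} to write $r$ and $q$ explicitly as cyclic conjugates of the generators or their inverses, conjugated by a single letter $c\in L\cup G$. After stripping the outer conjugating letters (which only affects lengths by a bounded amount and changes membership in cosets $Hx^{-1}H$ etc.\ in a controlled way), a piece of length $\ge 6$ would force a long common subword of two cyclic words of the form $a_0xa_1x\cdots$, $x_0ax_1a\cdots$, or the corresponding reversed/inverted words — where the $a_i$ and $a$ lie in $L\setminus H$ and the $x$ and $x_i$ lie in $G\setminus H$. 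A subword of length $\ge 6$ contains at least three letters from the $L$-side in the pattern $a_i,a_{i\oplus1},a_{i\oplus 2}$ (for the first type of relator) or at least three from the $G$-side in the pattern $x_i,x_{i\oplus1},x_{i\oplus2}$ (for the second), together with the intervening letters. I would then invoke Lemma~\ref{l:canrep}: matching the two canonical representations up to $H$-multipliers turns "this $L$-letter of $r$ equals this $L$-letter of $q$ up to left and right $H$-factors" into membership relations $a_i\in Ha_jH$, $x_i\in Hx_jH$, $a_i\in Hx$ is impossible across sides, and so on. The $H^\pm$-separatedness of $(a_i)$ and of $(x_i)$, together with $a_i\notin Hx H$ and the fact that $a,a_i$ sit in $L\setminus H$ while $x,x_i$ sit in $G\setminus H$, then forces the indices to line up in the only possible way (a genuine cyclic shift by a fixed amount), which collapses the two relators to the same element of $R^\circ$ — contradicting $q\ne r$. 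The cross-type case (a piece shared between an $a_0xa_1x\cdots$-relator and an $x_0ax_1a\cdots$-relator) is handled the same way but is easier: three consecutive $L$-letters on one side would have to match three consecutive letters of the other relator, but in $x_0ax_1a\cdots$ the $L$-letters (all equal to $a$, or $a^{-1}$) are separated by $G$-letters, so one quickly gets $a_i\in HaH$ for two different $i$, or $a_i\in Ha_j H$ — again contradicting separatedness after at most a couple of letters.

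The hard part will be the bookkeeping of the outer conjugating letters from Corollary~\ref{c:sym-hull} and the several "types" of relator (unshifted vs.\ cyclically shifted, direct vs.\ inverted, from the $a_i$-family vs.\ the single $x_0ax_1a\cdots$ relator), making sure that in every combination a piece of length $\ge 6$ yields either a contradiction with $H^\pm$-separatedness/$H$-malnormality or the equality $r=q$. I expect the $H$-malnormality hypothesis on the $a_i$ and on $a$ to enter precisely when a conjugating letter $c$ from Corollary~\ref{c:sym-hull} lands inside $L$ and one must argue that $c a_i$ or $c a$ cannot be absorbed into $H$ in a way that would create an extra-long overlap — i.e.\ malnormality guarantees that $H\cap a_iHa_i^{-1}=\{e\}$ kills the degenerate alignments. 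Once the length-$\le 5$ bound on pieces is established, the conclusion $|p| < \lambda\cdot 2n$ for all $\lambda > \frac{5}{2n}$ is immediate, completing the verification of $C'(\lambda)$.
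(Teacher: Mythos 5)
Your plan follows exactly the route of the paper's proof: describe $R^\circ$ explicitly via Corollary~\ref{c:sym-hull}, observe that since every element of $R^\circ$ has length at least $2n$ and $\lambda>\frac{5}{2n}$ it suffices to rule out pieces of length $6$, and then match letters of the two canonical representations via Lemma~\ref{l:canrep}, playing the resulting double-coset memberships against $H$-separatedness, $H^\pm$-separatedness and $H$-malnormality until one either reaches a contradiction or forces $r=r'$. So the strategy is the right one and nothing essential is missing at the level of ideas; the issue is that, as written, the proposal postpones precisely the part that constitutes the proof. The paper's argument is nothing but the bookkeeping you defer: $R^\circ$ splits into eight families (shifted relators of each of the two types and their inverses, each conjugated by one letter), and one must go through all pairs (the paper organizes this as $8\times 8$ cases, reduced by symmetry), since the way the contradiction arises is different in different cases.

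Three of your specific predictions would need correction when you carry this out. First, elements of $R^\circ$ have length $2n$ or $2n+1$ (the conjugating letter $c$ is absorbed only when $c\in H$), not uniformly $2n$; this is harmless for the two estimates but should be stated correctly. Second, $H$-malnormality of the $a_i$ and of $a$ is not used to control the conjugating letter; it is used in the ``same family'' cases: from $a_{i}=\hbar^{-1}h\,a_{i}\,h'^{-1}\hbar'$ malnormality gives $\hbar^{-1}h=e=h'^{-1}\hbar'$, and these trivialized interface elements propagate to show that the two overlapping elements have the same middle letter $x=y$ and the same conjugator, i.e.\ $r=r'$, contradicting distinctness; without killing these $H$-multipliers the collapse argument does not go through. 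Third, for overlaps between a relator and an inverted (or cyclically mismatched) relator the outcome is not ``collapse to the same element'' but incompatible index congruences such as $j=i$ together with $j\ominus1=i\oplus1$, or $j\ominus1=i\oplus1$ and $j\ominus2=i\oplus2$, which force $2\equiv 0$ or $2\equiv 4 \pmod n$; this is exactly where the hypothesis $n\ge3$ enters, and your sketch never invokes it. With these points incorporated, the plan becomes the paper's proof.
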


\begin{proof} 
By Corollary~\ref{c:sym-hull}, the symmetrized hull $R^\circ$ of the set $R$ coincides with the set $\bigcup_{i=1}^8R_i$ where
$$
\begin{aligned}
R_1&=\{la_ixa_{i\oplus1}x\cdots a_{i\oplus (n-1)}xl^{-1}:i\in n,\;x\in G\setminus H,\;l\in L\setminus Ha_i^{-1} \},\\
R_2&=\{gxa_ixa_{i\oplus1}\cdots xa_{i\oplus(n-1)}g^{-1}:i\in n,\;x\in G\setminus H,\; g\in G\setminus Hx^{-1}\},\\
R_3&=\{gx_iax_{i\oplus1}a\cdots x_{i\oplus(n-1)}ag^{-1}:i\in n,\;g\in G\setminus Hx_i^{-1}\},\\
R_4&=\{lax_iax_{i\oplus1}\cdots ax_{i\oplus(n-1)}l^{-1}:i\in n,\;l\in L\setminus Ha^{-1}\},\\
R_5&=\{g xa_{i}^{-1}xa_{i\ominus1}^{-1}\cdots xa_{i\ominus(n-1)}^{-1}g^{-1}:i\in n,\;x\in G\setminus H,\;g\in G\setminus Hx^{-1} \},\\
R_6&=\{l a_{i}^{-1}xa_{i\ominus1}^{-1}x\cdots a_{i\ominus(n-1)}^{-1}xl^{-1}:i\in n,\;x\in G\setminus H,\;l\in L\setminus Ha_{i}\},\\
R_7&=\{l a^{-1}x_{i}^{-1}a^{-1}x_{i\ominus1}^{-1}\cdots a^{-1}x_{i\ominus(n-1)}^{-1}l^{-1}:i\in n,\;l\in L\setminus Ha\},\\
R_8&=\{g x_{i}^{-1}a^{-1}x_{i\ominus 1}^{-1}a^{-1}\cdots x_{i\ominus (n-1)}^{-1}a^{-1}g^{-1}:i\in n,\;g\in G\setminus Hx_{i}\},
\end{aligned}
$$
Since $\lambda>\frac5{2n}$ and each element of the set $R^\circ$ has length $2n$ or $2n+1$, the condition $C'(\lambda)$ of $R^\circ$ will follow as soon as we check that for every distinct elements $r,r'\in R^\circ$, every $\{r,r'\}$-piece $p$ has length $|p|\le 5$. To derive a contradiction, assume that some distinct elements $r,r'\in R^\circ$ have a piece of length $>5$. Then they also have a piece $p$ of length $|p|=6$. By the definition of a piece, there exist elements $u,u'\in L*_HG$ such that $r\ddoteq pu$ and $r'\ddoteq pu'$.

 Depending on the location of the elements $r,r'$ in the set $$R^\circ=R_1\cup R_2\cup R_3\cup R_4\cup R_5\cup R_6\cup R_7\cup R_8,$$
we should consider 64 cases, which reduce to 36 cases by the symmetry $\{r,r'\}=\{r',r\}$. Depending on the location of $r$ in the set $R^\circ$, we consider 8 cases.
\smallskip

1. First we assume that $r\in R_1$ and hence $r=la_ixa_{i\oplus1}xa_{i\oplus2}\cdots a_{i\oplus(n-1)}xl^{-1}$ for some $i\in n$, $x\in G\setminus H$ and $l\in L\setminus Ha_i^{-1}$. In this case the equality $r\ddoteq pu$ and Lemma~\ref{l:canrep} imply that $p=l_1g_1l_2g_2l_3g_3$ for some $l_1,l_2,l_3\in L\setminus H$ and $g_1,g_2,g_3\in G\setminus H$, and also that $$la_i=l_1h_1,\;x=h_1^{-1}g_1h_2,\; a_{i\oplus1}=h_2^{-1}l_2h_3,\; x=h_3^{-1}g_2h_4,\;\mbox{and}\;a_{i\oplus2}=h_4^{-1}l_3h_5$$ for some $h_1,h_2,h_3,h_4,h_5\in H$. 

Depending on the location of the element $r'$ in the set $R^\circ$, eight cases are possible.
\smallskip

1.1. If $r'\in R_1$, then $r'=\lambda a_jya_{j\oplus1}ya_{j\oplus2}\cdots a_{j\oplus(n-1)}y\lambda^{-1}$ for some $j\in n$, $y\in G\setminus H$, and $\lambda\in L\setminus Ha_j^{-1}$. The equality $r'\ddoteq pu'$ implies that 
$$\lambda a_j=l_1\hbar_1,\; y=\hbar_1^{-1}g_1\hbar_2,\;a_{j\oplus1}=\hbar_2^{-1}l_2\hbar_3,\;y=\hbar_3^{-1}g_2\hbar_4, \mbox{ and } a_{j\oplus2}=\hbar_4^{-1}l_3\hbar_5$$ for some $\hbar_1,\hbar_2,\hbar_3,\hbar_4,\hbar_5\in H$. The $H$-separatedness and $H$-malnormality of the sequence $(a_k)_{k\in n}$ and the equality $a_{j\oplus1}=\hbar_2^{-1}l_2\hbar_3=\hbar_2^{-1}h_2a_{i\oplus1}h_3^{-1}\hbar_3$ imply that $j\oplus1=i\oplus1$ and $\hbar_2^{-1}h_2=e=h_3^{-1}\hbar_3$. The $H$-malnormality of the element $a_{i\oplus1}=a_{j\oplus2}$ and the equality $a_{i\oplus2}=a_{j\oplus2}=\hbar_4^{-1}l_3\hbar_5=\hbar_4^{-1}h_4a_{i\oplus2}h_5^{-1}\hbar_5$ imply that $\hbar_4^{-1}h_4=e$. Then $y=\hbar_3^{-1}g_2\hbar_4=\hbar_3^{-1}h_3xh_4^{-1}\hbar_4=exe=x$ and $x=y=\hbar_1^{-1}g_1\hbar_2=\hbar_1^{-1}h_1xh_2^{-1}\hbar_2=\hbar_1^{-1}h_1xe$ implies that $\hbar_1^{-1}h_1=e$. Finally, $\lambda a_i=\lambda a_j=l_1\hbar_1=la_ih_1^{-1}\hbar_1=la_ie$ implies $\lambda=l$ and hence $r'=r$, which contradicts the choice of $r\ne r'$.
\smallskip

1.2. If $r'\in R_2$, then $r'=gya_jya_{j\oplus1}ya_{j\oplus2}\cdots ya_{j\oplus(n-1)}g^{-1}$ for some $j\in n$, $y\in G\setminus H$, and $g\in G\setminus Hy^{-1}$. In this case the equality $r'\ddoteq pu'$ does not hold by Lemma~\ref{l:canrep}.
\smallskip

1.3. If $r'\in R_3$, then $r'=gx_jax_{j\oplus1}a\cdots x_{j\oplus(n-1)}ag^{-1}$ for some $j\in n$ and $g\in G\setminus Hx_j^{-1}$. In this case the equality $r'\ddoteq pu'$ does not hold by Lemma~\ref{l:canrep}.
\smallskip

1.4. If $r'\in R_4$, then  $r'=lax_jax_{j\oplus1}\cdots ax_{j\oplus(n-1)}l^{-1}$ for some  $j\in n$ and $l\in L\setminus Ha^{-1}$. In this case the equality $r'\ddoteq pu'$ and Lemma~\ref{l:canrep} imply that 
$$la=l_1\hbar_1,\; x_{j}=\hbar_1^{-1}g_1\hbar_2,\; a=\hbar_2^{-1}l_2\hbar_3,\;x_{j\oplus1}=\hbar_3^{-1}g_2\hbar_4\;\mbox{ and }a=\hbar_4^{-1}l_3\hbar_5$$ for some $\hbar_1,\hbar_2,\hbar_3,\hbar_4,\hbar_5\in H$. The inclusion $a_{i\oplus2}=h_4^{-1}l_3h_5\in Hl_3H=HaH=Hl_2H=Ha_{i\oplus1}H$ contradicts the $H$-separatedness of the sequence $(a_k)_{k\in n}$.
\smallskip

1.5. If $r'\in R_5$, then $r'=ga_{j}^{-1}ya_{j\ominus 1}^{-1}\cdots a_{j\ominus(n-1)}^{-1}yg^{-1}$ for some $j\in n$, $y\in G\setminus H$, and $g\in G\setminus Ha_j$. In this case the equality $r'\ddoteq pu'$ does not hold by Lemma~\ref{l:canrep}. 
\smallskip

1.6. If $r'\in R_6$, then $r'=la^{-1}_{j}ya^{-1}_{j\ominus1}\cdots a_{j\ominus(n-1)}^{-1}yl^{-1}$ for some $j\in n$ and some $l\in L\setminus Ha_{j}$.  The equality $r'\ddoteq pu'$ and Lemma~\ref{l:canrep} imply that 
$$la^{-1}_{j}=l_1\hbar_1,\; y=\hbar^{-1}g_1\hbar_2,\;a_{j\ominus1}^{-1}=\hbar_2^{-1}l_2\hbar_3,\;y=\hbar_3^{-1}g_2\hbar_4,\;a_{j\ominus2}^{-1}=\hbar_4^{-1}l_3\hbar_5$$ for some $\hbar_1,\hbar_2,\hbar_3,\hbar_4,\hbar_5\in H$. The $H^\pm$-separatedness of the sequence $(a_k)_{k\in n}$ and the equalities $a_{j\ominus1}^{-1}=\hbar_2^{-1}l_2\hbar_3\in Hl_2H=Hh_2a_{i\oplus1}h_3^{-1}H=Ha_{i\oplus1}H$ and $a_{j\ominus2}^{-1}=\hbar^{-1}_4l_3\hbar_5\in Hl_3H=Hh_4a_{i\oplus2}h_5^{-1}H=Ha_{i\oplus2}H$ imply that $j\ominus 1=i\oplus1$ and $j\ominus 2=i\oplus2$,  which is a desired contradiction completing the proof of the case 1.6.
\smallskip

1.7. If $r'\in R_7$, then  $r'=la^{-1}x_{j}^{-1}a^{-1}x_{j\ominus1}^{-1}a^{-1}\cdots a^{-1}x_{j\ominus(n-1)}^{-1}l^{-1}$ for some $j\in n$ and $l\in L\setminus Ha$.  In this case the equality $r'\ddoteq pu'$ and Lemma~\ref{l:canrep} imply that $$la^{-1}=l_1\hbar_1,\; x_{j}^{-1}=\hbar_1^{-1}g_1\hbar_2,\; a^{-1}=\hbar_2^{-1}l_2\hbar_3,\; x_{j\ominus1}^{-1}=\hbar_3^{-1}g_2\hbar_4\;\mbox{ and }a^{-1}=\hbar_4^{-1}l_3\hbar_5$$ for some $\hbar_1,\hbar_2,\hbar_3,\hbar_4,\hbar_5\in H$. The inclusion $a_{i\oplus2}\in Hl_3H=Ha^{-1}H=Hl_2H=Ha_{i\oplus1}H$ contradicts the $H$-separatedness of the sequence $(a_k)_{k\in n}$. 
\smallskip

1.8. If $r'\in R_8$, then $r'=gx_{j}^{-1}a^{-1}x_{j\ominus1}^{-1}\cdots x_{j\ominus(n-1)}a^{-1}g^{-1}$ for some $j\in n$ and $g\in G\setminus Hx_{j}$. In this case the equality $r'\ddoteq pu'$ does not hold by Lemma~\ref{l:canrep}.
\smallskip

2. Next, assume that $r\in R_2$ and hence $r=gxa_ixa_{i\oplus1}\cdots xa_{i\oplus(n-1)}g^{-1}$ for some $i\in n$, $x\in G\setminus H$ and $g\in G\setminus Hx^{-1}$. In this case the equality $r\ddoteq pu$ and Lemma~\ref{l:canrep} imply that $p=g_1l_1g_2l_2g_3l_3$ for some $g_1,g_2,g_3\in G\setminus H$ and $l_1,l_2,l_3\in L\setminus H$, and also that $$gx=g_1h_1,\;a_i=h_1^{-1}l_1h_2,\; x=h_2^{-1}g_2h_3,\;\mbox{ and }\; a_{i\oplus1}=h_3^{-1}l_2h_4$$ for some $h_1,h_2,h_3,h_4\in H$. 

Depending on the location of the element $r'$ in the set $R^\circ$, we consider eight cases.
\smallskip

2.1. The case $r'\in R_1$ follows from the case 1.2.
\smallskip

2.2. If $r'\in R_2$, then $r'=\gamma ya_jya_{j\oplus1}\cdots ya_{j\oplus(n-1)}\gamma^{-1}$ for some $j\in n$, $y\in G\setminus H$ and $\gamma\in G\setminus Hy^{-1}$. The equality $r'\ddoteq pu'$ and Lemma~\ref{l:canrep} imply 
$$\gamma y=g_1\hbar_1,\;a_j=\hbar_1^{-1}l_1\hbar_2,\;y=\hbar_2^{-1}g_2\hbar_3,\;\mbox{ and }\;a_{j\oplus1}=\hbar_3^{-1}l_2\hbar_4$$
for some $\hbar_1,\hbar_2,\hbar_3,\hbar_4\in H$. The $H$-separatedness and $H$-malnormality of the sequence $(a_k)_{k\in n}$ and the equality $a_j=\hbar_1^{-1}l_1\hbar_2=\hbar_1^{-1}h_1a_ih_2^{-1}\hbar_2$ imply that $j=i$ and $\hbar_1^{-1}\hbar_1=e=h_2^{-1}\hbar_2$. The $H$-malnormality of the element $a_{i\oplus1}=a_{j\oplus1}=\hbar_3^{-1}l_2\hbar_4=\hbar_3^{-1}h_3a_{i\oplus1}h_4^{-1}\hbar_4$ implies $\hbar_3^{-1}h_3=e$. Then $y=\hbar_2^{-1}g_2\hbar_3=\hbar_2^{-1}h_2xh_3^{-1}\hbar_3=exe=x$ and hence $\gamma x=\gamma y=g_1\hbar_1=gxh_1^{-1}\hbar_1=gxe$, which implies $\gamma=g$. Then $r'=r$, which contradicts the choice of $r\ne r'$.
\smallskip

2.3. If $r'\in R_3$, then $r'=\gamma x_jax_{j\oplus 1}a\cdots x_{j\oplus(n-1)}a\gamma^{-1}$ for some $j\in n$ and $\gamma\in G\setminus Hx_j^{-1}$. In this case the equality $r'\ddoteq pu'$ and Lemma~\ref{l:canrep} imply that $$\gamma x_j=g_1\hbar_1,\;a=\hbar_1^{-1}l_1\hbar_2,\; x_{j\oplus1}=\hbar_2^{-1}g_2\hbar_3,\;\mbox{ and }\;a=\hbar_3^{-1}l_2\hbar_4$$for some $\hbar_1,\hbar_2,\hbar_3,\hbar_4\in H$. The equality $a_{i\oplus 1}=h_3^{-1}l_2h_4\in Hl_2H=H\hbar_3a\hbar_4^{-1}H=HaH=H\hbar_1^{-1}l_1\hbar_2H=Hl_1H=Hh_1a_ih_1^{-1}H=Ha_iH$ contradicts the $H$-separatedness of the sequence $(a_k)_{k\in n}$.
\smallskip

2.4. If $r'\in R_4$, then $r'=\lambda ax_jax_{j\oplus 1}\cdots ax_{j\oplus(n-1)}\lambda^{-1}$ for some $j\in n$ and $\lambda\in L\setminus Ha^{-1}$. In this case the equality $r'\ddoteq pu'$ does not hold by Lemma~\ref{l:canrep}.
\smallskip

2.5. If $r'\in R_5$, then $r'=\gamma  ya_{j}^{-1}y a_{j\ominus1}^{-1}\cdots  ya_{j\ominus(n-1)}^{-1}\gamma^{-1}$ for some $j\in n$, $y\in G\setminus H$ and $\gamma\in G\setminus Hy^{-1}$. The equality $r'\ddoteq pu'$ and Lemma~\ref{l:canrep} imply that $$\gamma y=g_1\hbar_1,\; a_{j}^{-1}=\hbar_1^{-1}l_1\hbar_2,\;y=\hbar_2^{-1}g_2\hbar_3\;\mbox{ and }\;a_{j\ominus1}^{-1}=\hbar_3^{-1}l_2\hbar_4$$ for some $\hbar_1,\hbar_2,\hbar_3,\hbar_4\in H$. The $H^\pm$-separatedness of the sequence $(a_k)_{k\in n}$ and the inclusions $a_{j}^{-1}=\hbar_1^{-1}l_1\hbar_2\in Hl_1H=Hh_1a_ih_2^{-1}H=Ha_iH$ and $a_{j\ominus1}^{-1}\in Hl_2H=Hh_3a_{i\oplus1}h_4^{-1}H=Ha_{i\oplus1}H$ imply $j=i$ and $j\ominus1=i\oplus 1$, which is a desired contradition completing the proof of the case 2.5.
\smallskip

2.6. If $r'\in R_6$, then $r'=\lambda a_{j}^{-1}ya_{j\ominus1}^{-1}\cdots a_{j\ominus(n-1)}^{-1}y\lambda^{-1}$ for some $j\in n$, $y\in G\setminus H$ and $\lambda\in L\setminus Ha_{j}$. In this case the equality $r'\ddoteq pu'$ does not hold, according to Lemma~\ref{l:canrep}.
\smallskip

2.7. If $r'\in R_7$, then the equality $r'\ddoteq pu'$ does not hold, by Lemma~\ref{l:canrep}.
\smallskip

2.8. If $r'\in R_8$, then $r'=\gamma x_{j}^{-1}a^{-1}x_{j\ominus1}^{-1}a^{-1}\cdots x_{j\ominus(n-1)}^{-1}a^{-1}\gamma^{-1}$ for some $j\in n$ and $\gamma\in G\setminus Hx_{j}$. The equality $r'\ddoteq pu'$ and Lemma~\ref{l:canrep} imply that $$\gamma x_{j}^{-1}=g_1\hbar_1,\;a^{-1}=\hbar_1^{-1}l_1\hbar_2,\; x_{j\ominus1}^{-1}=\hbar_2^{-1}g_2\hbar_3,\;\mbox{ and }a^{-1}=\hbar_3^{-1}l_2\hbar_4$$ for some $\hbar_1,\hbar_2,\hbar_3,\hbar_4\in H$. It follows that $a_{i\oplus1}=h_3^{-1}l_2h_4\in Hl_2H=H\hbar_3a^{-1}\hbar_4^{-1}H=Ha^{-1}H=H\hbar_1^{-1}l_1\hbar_2H=Hl_1H=Hh_1a_ih_2^{-1}H=Ha_iH$, which contradicts the $H$-separatedness of the sequence $(a_k)_{j\in n}$.
\smallskip

3. Assume that $r\in R_3$ and hence $r=gx_iax_{i\oplus1}ax_{i\oplus2}\cdots x_{i\oplus(n-1)}ag^{-1}$ for some $i\in n$ and $g\in G\setminus Hx_i^{-1}$. The equality $r\ddoteq pu$ implies that $g=g_1l_1g_2l_2g_3l_3$ for some $g_1,g_2,g_3\in G\setminus H$ and $l_1,l_2,l_3\in L\setminus H$, and also that $$gx_i=g_1h_1,\;a=h_1^{-1}l_1h_2,\;x_{i\oplus1}=h_2^{-1}g_2h_3,\;a=h_3^{-1}l_2h_4\;\mbox{and}\;x_{i\oplus2}=h_4^{-1}g_3h_5$$for some $h_1,h_2,h_3,h_4,h_5\in H$. 

Depending on the location of the element $r'$ in the set $R^\circ$, we consider eight cases.
\smallskip

3.1. The case $r'\in R_1$ follows from the case 1.3.
\smallskip

3.2. The case $r'\in R_2$ follows from the case 2.3.
\smallskip

3.3. If $r'\in R_3$, then $r'=\gamma x_jax_{j\oplus1}a\cdots x_{j\oplus(n-1)}a\gamma^{-1}$ for some $j\in n$ and $\gamma\in G\setminus Hx_j^{-1}$. The equality $r'\ddoteq pu'$ and Lemma~\ref{l:canrep} imply that 
$$\gamma x_j=g_1\hbar_1,\;a=\hbar_1^{-1}l_1\hbar_2,\;x_{j\oplus1}=\hbar_2^{-1}g_2\hbar_3\;\mbox{ and }\;a=\hbar_3^{-1}l_2\hbar_4$$
for some $\hbar_1,\hbar_2,\hbar_2,\hbar_4\in H$. The $H$-malnormality of $a$ and the equalities $a=\hbar_1^{-1}l_1\hbar_2=\hbar_1^{-1}h_1ah_2^{-1}\hbar_2$ and $a=\hbar_3^{-1}l_2\hbar_4=\hbar_3^{-1}h_3ah_4^{-1}\hbar_4$ imply that $e=\hbar_1^{-1}h_1=h_2^{-1}\hbar_2=\hbar_3^{-1}h_3$. The $H$-separatedness of the sequence $(x_k)_{k\in n}$ and the equality $x_{j\oplus1}=\hbar_2^{-1}g_2\hbar_3=\hbar_2^{-1}h_2x_{i\oplus1}h_3^{-1}\hbar_3=ex_{i\oplus1}e=x_{i\oplus1}$ implies $j\oplus1=i\oplus1$ and $j=i$. Also $\gamma x_i=\gamma x_j=g_1\hbar_1=gx_ih_1^{-1}\hbar_1=gx_i$ implies $\gamma=g$ and hence
$r'=r$, which contradicts the choice of $r\ne r'$.
\smallskip

3.4. If $r'\in R_4$, then the equality $r'\ddoteq pu'$ does not hold according to Lemma~\ref{l:canrep}.
\smallskip

3.5. If $r'\in R_5$, then $r'=\gamma ya_{j}^{-1}ya_{j\ominus1}^{-1}\cdots ya_{j\ominus(n-1)}^{-1}\gamma^{-1}$ for some $j\in n$, $y\in G\setminus H$ and $\gamma\in G\setminus Hy^{-1}$. The equality $r'\ddoteq pu'$ and Lemma~\ref{l:canrep} imply that $$\gamma y=g_1\hbar_1,\;a_{j}^{-1}=\hbar_1^{-1}l_1\hbar_2,\;y=\hbar_2^{-1}g_2\hbar_3\;\mbox{ and }\;a_{j\ominus 1}^{-1}=\hbar_3^{-1}l_2\hbar_4$$ for some $\hbar_1,\hbar_2,\hbar_3,\hbar_4\in H$. It follows that $a_{j\ominus1}^{-1}=\hbar_3^{-1}l_2\hbar_4\in Hl_2H=Hh_3ah_4^{-1}H=HaH=Hh_1^{-1}l_1h_2H=Hl_1H=H\hbar_1a_{j}^{-1}\hbar_2^{-1}H=Ha_{j}^{-1}H$, which contradicts the $H$-separatedness of the sequence $(a_k)_{k\in n}$.
\smallskip

3.6 and 3.7. If $r'\in R_6\cup R_7$, then the equality $r'\ddoteq pu'$ does not hold, by Lemma~\ref{l:canrep}.
\smallskip

3.8. If $r'\in R_8$, then $r'=\gamma x^{-1}_{j}a^{-1}x^{-1}_{j\ominus1}a^{-1}x^{-1}_{j\ominus2}\cdots x_{j\ominus(n-1)}^{-1}a^{-1}\gamma^{-1}$ for some $j\in n$ and $\gamma\in G\setminus Hx_j$. The equality $r'\ddoteq pu'$ and Lemma~\ref{l:canrep} imply that $$\gamma x^{-1}_{j}=g_1\hbar_1,\;a^{-1}=\hbar_1^{-1}l_1\hbar_2,\;x_{j\ominus1}^{-1}=\hbar_2^{-1}g_2\hbar_3,\;a^{-1}=\hbar_3^{-1}l_2\hbar_4\;\mbox{ and }x_{j\ominus2}^{-1}=\hbar_4^{-1}g_3\hbar_5$$ for some $\hbar_1,\hbar_2,\hbar_3,\hbar_4,\hbar_5\in H$. The $H^\pm$-separatedness of the sequence $(x_k)_{k\in n}$ and the inclusions 
$x_{j\ominus1}^{-1}\in Hg_2H=Hx_{i\oplus1}H$ and $x_{j\ominus2}^{-1}\in Hg_3H=Hx_{i\oplus2}H$ imply that $j\ominus 1=i\oplus 1$ and $j\ominus 2=i\oplus 2$ and hence $0=2$ in $n\ge 3$,  which is a contradiction completing the analysis of the case 3.8.
\smallskip

4. Assume that $r\in R_4$ and hence $r=l ax_iax_{i\oplus1}a\cdots ax_{i\oplus(n-1)}l^{-1}$ for some $i\in n$ and $l\in L\setminus Ha^{-1}$. The equality $r\ddoteq pu$ and Lemma~\ref{l:canrep} imply that $p=l_1g_1l_2g_2l_3g_3$ for some $l_1,l_2,l_3\in L\setminus H$ and $g_1,g_2,g_3\in G\setminus H$, and also that
$$l a=l_1h_1,\;x_i=h_1^{-1}g_1h_2,\;a=h_2^{-1}l_2h_3,\;x_{i\oplus 1}=h_3^{-1}g_2h_4\;\mbox{ and }\;a=h_4^{-1}l_3h_5$$for some $h_1,h_2,h_3,h_4,h_5\in H$.

Depending on the location of the element $r'$ in the set $R^\circ$, we consider eight cases.
\smallskip

4.1. The case $r'\in R_1$ follows from the case 1.4.
\smallskip

4.2. The case $r'\in R_2$ follows from the case 2.4.
\smallskip

4.3. The case $r'\in R_3$ follows from the case 3.4.
\smallskip

4.4. If $r'\in R_4$, then $r'=\lambda ax_jax_{j\oplus 1}ax_{j\oplus2}\cdots ax_{j\oplus(n-1)}\lambda^{-1}$ for some $j\in n$ and $\lambda\in L\setminus Ha^{-1}$.
The equality $r'\ddoteq pu'$ and Lemma~\ref{l:canrep} imply that $$\lambda a=l_1\hbar_1,\;x_j=\hbar_1^{-1}g_1\hbar_2,\;a=\hbar_2^{-1}l_2\hbar_3,\;x_{j\oplus1}=\hbar_3^{-1}g_2\hbar_4\;\mbox{ and }\;a=\hbar_4^{-1}l_3\hbar_5$$for some $\hbar_1,\hbar_2,\hbar_3,\hbar_4,\hbar_5\in H$. The $H$-malnormality of $a$ and the equalities $a=\hbar_2^{-1}l_2\hbar_3=\hbar_2^{-1}h_2ah_3^{-1}\hbar_3$ and $a=\hbar_4^{-1}l_3\hbar_5=\hbar_4^{-1}h_4ah_5^{-1}\hbar_5$ imply $e=\hbar_2^{-1}h_2=h_3^{-1}\hbar_3=\hbar_4^{-1}h_4$. The $H$-separatedness of the sequence $(x_k)_{k\in n}$ and the equality $x_{j\oplus1}=\hbar_3^{-1}g_2\hbar_4=\hbar_3^{-1}h_3x_{i\oplus1}h_4^{-1}\hbar_4=ex_{i\oplus1}e=x_{i\oplus1}$ imply $j\oplus1=i\oplus1$. It follows from $x_i=x_j=\hbar_1g_1\hbar_2=\hbar_1h_1^{-1}x_ih_2^{-1}\hbar_2=\hbar_1h_1^{-1}x_ie$ that $\hbar_1h_1^{-1}=e$ and finally, $\lambda a=l_1\hbar_1=l_1h_1=la$ and hence $\lambda=l$ and $r'=r$, which contradicts the choice of $r\ne r'$.
\smallskip

4.5. If $r'\in R_5$, then the equality $r'\ddoteq pu'$ does not hold, by Lemma~\ref{l:canrep}.
\smallskip

4.6. If $r'\in R_6$, then $r'=\lambda a_{j}^{-1}ya_{j\ominus1}^{-1}ya_{j\ominus2}^{-1}\cdots a_{j\ominus(n-1)}^{-1}y\lambda^{-1}$ for some $j\in n$, $y\in G\setminus H$ and $\lambda\in L\setminus Ha_{j}$. The equality $r'\ddoteq pu'$ and Lemma~\ref{l:canrep} imply that $$\lambda a_{j}^{-1}=l_1\hbar_1,\;
y=\hbar_1^{-1}g_1\hbar_2,\;a_{j\ominus1}^{-1}=\hbar_2^{-1}l_2\hbar_3\;y=\hbar_3^{-1}g_2\hbar_4\;\mbox{ and }\;a_{j\ominus2}^{-1}=\hbar_4^{-1}l_3\hbar_5$$
for some $\hbar_1,\hbar_2,\hbar_3,\hbar_4,\hbar_5\in H$. It follows that $a_{j\ominus2}^{-1}\in Hl_3H=HaH=Hl_2H=Ha_{j\ominus1}^{-1}H$, which contradicts the $H$-separatedness of the sequence $(a_k)_{k\in n}$.
\smallskip

4.7. If $r'\in R_7$, then $r'=\lambda a^{-1}x_{j}^{-1}a^{-1}x_{j\ominus1}^{-1}\cdots a^{-1}x_{j\ominus(n-1)}^{-1}\lambda^{-1}$ for some $j\in n$ and $\lambda\in L\setminus Ha$. The equality $r'\ddoteq pu'$ and Lemma~\ref{l:canrep} imply that $$\lambda a^{-1}=l_1\hbar_1,\; x_{j}^{-1}=\hbar_1^{-1}g_1\hbar_2,\;a^{-1}=\hbar_2^{-1}l_2\hbar_3\;\mbox{ and }\;x_{j\ominus1}^{-1}=\hbar_3^{-1}g_2\hbar_4 $$for some $\hbar_1,\hbar_2,\hbar_3,\hbar_4\in H$. The $H^\pm$-separatedness of the sequence $(x_k)_{k\in n}$ and the inclusions $x_{j}^{-1}\in Hg_1H=Hx_iH$ and $x_{j\ominus1}^{-1}\in Hg_2H=Hx_{i\oplus 1}H$ imply that $i=j=i\oplus2$, which is a desired contradiction completing the analysis of the case 4.7.
\smallskip

4.8. By Lemma~\ref{l:canrep}, the inclusion $r'\in R_8$ contradicts the equality $r'\ddoteq pu'\ddoteq(l_1g_1l_2g_2l_3g_3)u'$.
\smallskip

5. Assume that $r\in R_5$.  Depending on the location of the element $r'$ in the set $R^\circ$, we consider eight cases.
\smallskip

5.1--5.4. For every $k\in\{1,2,3,4\}$, the case $r'\in R_k$ follows from the case $k$.5.

5.5. The case $r'\in R_5$ follows from the case 2.2 applied to the  sequence $(a_{\ominus k}^{-1})_{k\in n}$  instead of the sequence $(a_k)_{k\in n}$. We recall that $\ominus k=0\ominus k\in n$ is the inverse element to $k$ in the cyclic group $(n,\oplus)$.

5.6. The case $r'\in R_6$ follows from the case 2.1 applied to the sequence $(a_{\ominus k}^{-1})_{k\in n}$  instead of the sequence $(a_k)_{k\in n}$.

5.7. The case $r'\in R_7$ follows from the case 2.4 applied to the $H$-malnormal element $a^{-1}$ and sequences $(a_{\ominus k}^{-1})_{k\in n}$ and $(x_{\ominus k}^{-1})_{k\in n}$ instead of the element $a$ and the sequences $(a_k)_{k\in n}$ and $(x_i)_{i\in n}$, respectively.

5.8. The case $r'\in R_8$ follows from the case 2.4 applied to the  element $a^{-1}$ and sequences $(a_{\ominus k}^{-1})_{k\in n}$ and $(x_{\ominus k}^{-1})_{k\in n}$ instead of the element $a$ and the sequences $(a_k)_{k\in n}$ and $(x_k)_{k\in n}$, respectively.
\smallskip

6. Assume that $r\in R_6$.  Depending on the location of the element $r'$ in the set $R^\circ$, we consider eight cases.
\smallskip

6.1--6.5. For every $k\in\{1,\dots,5\}$, the case $r'\in R_k$  follows from the case $k$.6.

6.6. The case $r'\in R_6$ follows from the case 1.1 applied to the sequence $(a_{\ominus k}^{-1})_{k\in n}$  instead of the sequence $(a_k)_{k\in n}$.

6.7. The case $r'\in R_7$ follows from the case 1.4 applied to the element $a^{-1}$ and sequences $(a_{\ominus k}^{-1})_{k\in n}$ and $(x_{\ominus k}^{-1})_{k\in n}$ instead of the element $a$ and the sequences $(a_k)_{k\in n}$ and $(x_k)_{k\in n}$, respectively.

6.8. The case $r'\in R_8$ follows from the case 1.2 applied to the element element $a^{-1}$ and sequences $(a_{\ominus k}^{-1})_{k\in n}$ and $(x_{\ominus k}^{-1})_{k\in17}$ instead of the element $a$ and the sequences $(a_k)_{k\in n}$ and $(x_k)_{k\in n}$, respectively.
\smallskip

7. Assume that $r\in R_7$.  Depending on the location of the element $r'$ in the set $R^\circ$, we consider eight cases.
\smallskip

7.1--7.6. For every $k\in\{1,\dots,6\}$, the case $r'\in R_k$  follows from the case $k$.7.

7.7. The case $r'\in R_7$ follows from the case 4.4 applied to the element $a^{-1}$ and sequence $(x_{\ominus k}^{-1})_{k\in n}$ instead of the element $a$ and the sequence $(x_k)_{k\in17}$.

7.8. The case $r'\in R_8$ follows from the case 1.3 applied to the element element $a^{-1}$ and sequences $(a_{\ominus k}^{-1})_{k\in n}$ and $(x_{\ominus k}^{-1})_{k\in n}$ instead of the element $a$ and the sequences $(a_k)_{k\in n}$ and $(x_k)_{i\in n}$, respectively.
\smallskip

8. Finally assume that $r\in R_8$.  Depending on the location of the element $r'$ in the set $R^\circ$, we consider eight cases.
\smallskip

8.1--8.7. For every $k\in\{1,\dots,7\}$, the case $r'\in R_k$  follows from the case $k$.8.

8.8. The case $r'\in R_8$ follows from the case 3.3 applied to the element $a^{-1}$ and sequence $(x_{\ominus k}^{-1})_{k\in n}$ instead of the element $a$ and the sequence $(x_k)_{k\in n}$.
\end{proof}

\section{Two amalgamation lemmas}\label{s:AL}

In this section we prove two amalgamation lemmas, which will be applied for performing the inductive step in the proof of Embedding Lemma~\ref{l:EL}. 

\begin{lemma}\label{l:amalgamation} Let $L,G,H$ be groups such that $L\cap G=H$ and the subgroup $H$ is malnormal in $L$.  Let $(a_k)_{k\in 18}$ be an $H^\pm$-separated sequence in $L\setminus H$ and $(x_k)_{i\in 18}$ be an $H^\pm$-separated sequence in $G\setminus H$. For every elements $a\in L\setminus H$ and $b\in H$, there exists a group $M$ such that
\begin{enumerate}
\item $G$ and $L$ are subgroups of $M$;
\item the unique homomorphism $q:L*_HG\to M$ extending the identity embedding $L\cup G\to M$ is surjective and its kernel coincides with the smallest normal subgroup of $L*_H G$ containing the set $\{a_0xa_1x\dots a_{17}x:x\in G\setminus H\}\cup\{b^{-1}x_0ax_1a\cdots x_{17}a\}$;
\item the subgroup $G$ is malnormal in $M$;
\item the sequence $(a_k)_{k\in 18}$ is $G^\pm$-separated;
\item $e=a_0xa_1x\cdots a_{17}x$ in $M$ for every $x\in G\setminus H$;
\item $e\ne lgc_1lgc_2\cdots lgc_n$ for every $l\in L\setminus H$, $g\in G\setminus H$, and $c_1,\dots,c_n\in H$. 
\end{enumerate}
\end{lemma}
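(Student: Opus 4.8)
The plan is to realize $M$ as a small-cancellation quotient of the free product with amalgamation $F_*=L*_HG$. Put
$$S=\{a_0xa_1x\cdots a_{17}x:x\in G\setminus H\}\cup\{b^{-1}x_0ax_1a\cdots x_{17}a\},$$
let $R^\circ$ be the symmetrized hull of $S$, let $N$ be the smallest normal subgroup of $F_*$ containing $S$, and set $M=F_*/N$ with $q\colon F_*\to M$ the quotient homomorphism. Since $H$ is malnormal in $L$, every element of $L\setminus H$ is $H$-malnormal, so $(a_k)_{k\in18}$ and $a$ meet the $H$-malnormality hypotheses of Lemma~\ref{l:C'}; moreover $b^{-1}x_0$ lies in the double coset $Hx_0H$, so $(b^{-1}x_0,x_1,\dots,x_{17})$ is still $H^\pm$-separated and the second relator has exactly the shape treated in Lemma~\ref{l:C'}. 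Hence, fixing $\lambda$ with $\tfrac{5}{36}<\lambda<\tfrac{4}{27}$ (say $\lambda=\tfrac{31}{216}$), Lemma~\ref{l:C'} with $n=18$ shows that $R^\circ$ satisfies $C'(\lambda)$. By Corollary~\ref{c:sym-hull} each element of $R^\circ$ is a conjugate of $\rho^{\pm1}$ for some $\rho\in S$ and has length $36$ or $37$, so a cyclically reduced element of $R^\circ$ has length $36$; also $S\subseteq R^\circ$, so $N$ is at the same time the normal closure of the symmetrized set $R^\circ$, and Lemmas~\ref{l:large-piece}, \ref{l:qLG} and~\ref{l:malnormal} apply to $N$ (note $\lambda\le\tfrac{1}{6}$).

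Items (1), (2) and (5) are then immediate: by Lemma~\ref{l:qLG}(2),(3) the restriction $q{\restriction}_{L\cup G}$ is injective and $q[L]\cap q[G]=q[H]$, so $L$ and $G$ are identified with subgroups of $M$, $q$ becomes the unique homomorphism extending the inclusion $L\cup G\hookrightarrow M$, and $\ker q=N$ by construction; and every relator $a_0xa_1x\cdots a_{17}x$ lies in $S\subseteq N$, giving (5). For (3) I would first check that $S$ is half $H^-$-separated: the even-position letters of the canonical representation of $a_0xa_1x\cdots a_{17}x$ are $a_0,\dots,a_{17}$, those of $b^{-1}x_0ax_1a\cdots x_{17}a$ are $b^{-1}x_0,x_1,\dots,x_{17}$, and in both cases $H^\pm$-separatedness of the relevant sequence yields $x_i^{-1}\notin Hx_jH$ for distinct even positions $i,j$. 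Since cyclically reduced elements of $R^\circ$ have length $36$ and $(1-6\lambda)\cdot36>4$, Lemma~\ref{l:malnormal} gives $xyx^{-1}\notin GN$ for all $x\in F_*\setminus GN$ and $y\in G\setminus\{e\}$; as $q^{-1}[q[G]]=GN$, this is exactly malnormality of $G$ in $M$.

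The combinatorial core is (4) and (6), and the key input is that by Lemma~\ref{l:large-piece} (applied with $|r|=36$) every $w\in N\setminus\{e\}$ satisfies $|w|\ge|s|>(1-3\lambda)\cdot36>20$, so $|w|\ge21$. For (4): $a_k\notin G$ in $M$ because $a_k\in L\setminus H$ and $q[L]\cap q[G]=q[H]$; and for distinct $i,j$ and $g_1,g_2\in G$ the word $g_1a_jg_2a_i^{-1}$, resp.\ $g_1a_j^{-1}g_2a_i^{-1}$, has length $\le4<21$ in $F_*$, so if it lies in $N$ it equals $e$, which by Lemma~\ref{l:canrep} (comparing lengths) forces $g_1,g_2\in H$ and $a_i\in Ha_jH$, resp.\ $a_i\in Ha_j^{-1}H$, contradicting the $H^\pm$-separatedness of $(a_k)_{k\in18}$; hence $(a_k)_{k\in18}$ is $G^\pm$-separated. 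For (6): if $w:=lgc_1lgc_2\cdots lgc_n$ lay in $N$, then $w\ne e$ since $|w|=2n>0$, so $2n\ge21$; by Lemma~\ref{l:large-piece} there are a cyclically reduced $r\in R^\circ$ and factorizations $w\equiv usv$, $r\equiv st$ with $|s|\ge21$. Then $s$ is a contiguous block of the canonical representation of $w$, whose $L$-letters all lie in $HlH$ and whose $G$-letters all lie in $HgH$ (here $c_i\in H$ forces $gc_i\in HgH$), and $s$ is also a contiguous block of the canonical representation of $r$, which by Corollary~\ref{c:sym-hull} agrees, letter by letter up to $H$-double cosets, with a cyclic rotation of $\rho^{\pm1}$ for some $\rho\in S$. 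But any block of length $\ge21$ inside such a rotation contains at least two consecutive $L$-letters in distinct $H$-double cosets when $\rho$ is the relator $a_0xa_1x\cdots a_{17}x$ or its inverse (these are consecutive $a_i^{\pm1}$'s, $H^\pm$-separated), and at least two consecutive $G$-letters in distinct $H$-double cosets when $\rho=b^{-1}x_0ax_1a\cdots x_{17}a$ or its inverse (these are consecutive $x_i^{\pm1}$'s, $H^\pm$-separated); either way this contradicts that those double cosets are all $HlH$, resp.\ $HgH$. Hence $w\notin N$, proving (6).

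The genuinely tight point — and the reason $n=18$, i.e.\ ``$36$-Shelah'', appears — is the numerology: the relators have length $2n=36$, and one needs $\lambda$ satisfying simultaneously $\lambda>\tfrac{5}{2n}=\tfrac{5}{36}$ (so Lemma~\ref{l:C'} yields $C'(\lambda)$), $\lambda\le\tfrac{1}{6}$ (so Lemma~\ref{l:large-piece} applies), and $(1-6\lambda)\cdot2n>4$, i.e.\ $\lambda<\tfrac{4}{27}$ (so Lemma~\ref{l:malnormal} applies and, as used above, nontrivial elements of $N$ have length exceeding $20$). The interval $(\tfrac{5}{36},\tfrac{4}{27})$ is nonempty but only barely, so $n=18$ is the least value for which the whole scheme goes through; everything else is routine bookkeeping — passing between the quotient $M$ and properties of $N\subseteq F_*$, and elementary length and double-coset combinatorics in $L*_HG$.
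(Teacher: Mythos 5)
Your proposal is correct and follows essentially the same route as the paper: the same quotient of $L*_HG$ by the normal closure of the same relator set, with Lemma~\ref{l:C'} applied to the modified sequence starting with $b^{-1}x_0$, and Lemmas~\ref{l:qLG}, \ref{l:large-piece} and \ref{l:malnormal} yielding items (1)--(6) exactly as in the paper's argument (your $\lambda=\tfrac{31}{216}$ plays the role of the paper's $\lambda=\tfrac17$). Your double-coset argument for item (6) is just a compact reformulation of the paper's explicit case analysis over the families of cyclically reduced elements of $R^\circ$, and your length-$\ge 21$ bound in item (4) replaces the paper's appeal to Lemma~\ref{l:qLG}(1) with the same effect.
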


\begin{proof} In the group $F_*=L*_HG$, consider the subset $$R=\{a_0xa_1x\dots a_{17}x:x\in G\setminus H\}\cup\{b^{-1}x_0ax_1a\cdots x_{17}a\}.$$ Let $(\tilde x_k)_{k\in 18}$ be the sequence defined by $$\tilde x_k=\begin{cases}b^{-1}x_0&\mbox{if $k=0$};\\
x_k&\mbox{otherwise}.
\end{cases}
$$Since $b\in H$, the $H^\pm$-separatedness of the sequence $(x_k)_{k\in 18}$ implies the $H^\pm$-separatedness of the sequence $(\tilde x_k)_{k\in 18}$. By Lemma~\ref{l:C'}, the symmetrized hull $R^\circ$ of the set $R$ satisfies the condition $C'(\tfrac17)$. Let $N$ be the smallest normal subgroup of the group $F_*$ containing the set $R$, $M=F_*/N$ be the quotient group and $q:F_*\to M$ be the quotient homomorphism. By Lemma~\ref{l:qLG}, the restriction $q{\restriction}_{L\cup G}$ is injective and $q[H]=q[L]\cap q[G]$. So, we can identify $L$ and $G$ with subgroups of $M$ such that $L\cap G=H$. After such an identification, we see that the conditions (1) and (2) of Lemma~\ref{l:amalgamation} are satisfied. So, it remains to check the conditions (3)--(5).
\smallskip

3. The $H^\pm$-separatedness of the sequences $(a_k)_{k\in 18}$ and $(x_k)_{k\in 18}$ implies the half $H^-$-separatedness of the set $R$. By Corollary~\ref{c:sym-hull}, every cyclically reduced element $r\in R^\circ$ has length $|r|=36$. Since $(1-6\frac17)|r|=\frac{36}{7}>4$, we can apply Lemma~\ref{l:malnormal} and conclude that the subgroup $G$ is malnormal in $M$.
\smallskip

4. Assuming that the sequence $(a_i)_{i\in18}$ is not $G^\pm$-separated in $M$, we can find distinct numbers $i,j\in18$ such that $a_i\in Ga_j^\pm G$, where $a_j^\pm=\{a_j,a_j^{-1}\}$. Then there exist $g,\gamma\in G$ and $\e\in\{1,-1\}$ such that $a_i=ga_j^\e\gamma$ in $M$ and hence $a_i^{-1}ga_j^\e\gamma\in N$. By Lemma~\ref{l:qLG}, every element $w\in N\setminus\{e\}$ has length $|w|>7-3=4$.  This fact implies that the element $a_i^{-1}ga_j^\e\gamma\in N$ of length $\le 4$ equals the neutral element of the group $F_*$. Then $g=a_i\gamma^{-1}a_j^{-\e}$ and $\gamma=a_j^{-\e}g^{-1}a_i$. It follows from $a_i,a_j\in L\setminus H$ and 
$1\ge |g|=|a_i\gamma^{-1}a_j^{-\e}|$ and $1\ge |\gamma|=|a_j^{-\e}g^{-1}a_i|$ that $\gamma^{-1},g^{-1}\in H$. Then $a_i=ga_j^\e\gamma\in Ha_j^{\e}H$, which contradicts the $H^\pm$-separatedness of the sequence $(a_k)_{k\in18}$ in $L$.
\smallskip

5. For every $x\in G\setminus H$, the element $a_0xa_1x\dots a_{17}x$ belongs to the set $R\subseteq N\subseteq F_*$ and hence equals $e$ in the group $M$.
\smallskip

6. To derive a contradiction, assume that $e=lgc_1lgc_2\cdots lgc_n$ for some elements $l\in L\setminus H$, $g\in G\setminus H$ and  $c_1,\dots,c_n\in H$. Then $lgc_1\cdots lgc_n\in N$ in the group $F_*$. By Lemma~\ref{l:large-piece}, there are elements $u,s,v,t\in F_*$ and a cyclically reduced element $r\in R^\circ$ such that 
$lgc_1\cdots lgc_n\equiv usv$, 
 $r\equiv st$ and $|s|>(1-3\frac17)|r|=\frac47|36|=20\tfrac17$ and hence  
$2n=|lgc_1lgc_1\cdots lgc_n|=|usv|\ge |s|\ge 21$ 
and $n\ge 11$. 

Corollary~\ref{c:sym-hull} implies that the cyclically reduced element $r$ belongs to the set $\bigcup_{i=1}^8R^\circ_i$ where 
$$
\begin{aligned}
R_1^\circ&=\{ca_ixa_{i\oplus1}x\cdots a_{i\oplus (n-1)}xc^{-1}:i\in n,\;x\in G\setminus H,\;c\in H\setminus Ha_i^{-1} \},\\
R_2^\circ&=\{cxa_ixa_{i\oplus1}\cdots xa_{i\oplus(n-1)}c^{-1}:i\in n,\;x\in G\setminus H,\; c\in H\setminus Hx^{-1}\},\\
R_3^\circ&=\{cx_iax_{i\oplus1}a\cdots x_{i\oplus(n-1)}ac^{-1}:i\in n,\;c\in H\setminus Hx_i^{-1}\},\\
R_4^\circ&=\{cax_iax_{i\oplus1}\cdots ax_{i\oplus(n-1)}c^{-1}:i\in n,\;c\in H\setminus Ha^{-1}\},\\
R_5^\circ&=\{c xa_{i}^{-1}xa_{i\ominus1}^{-1}\cdots xa_{i\ominus(n-1)}^{-1}c^{-1}:i\in n,\;x\in G\setminus H,\;c\in H\setminus Hx^{-1} \},\\
R_6^\circ&=\{c a_{i}^{-1}xa_{i\ominus1}^{-1}x\cdots a_{i\ominus(n-1)}^{-1}xc^{-1}:i\in n,\;x\in G\setminus H,\;c\in H\setminus Ha_{i}\},\\
R_7^\circ&=\{c a^{-1}x_{i}^{-1}a^{-1}x_{i\ominus1}^{-1}\cdots a^{-1}x_{i\ominus(n-1)}^{-1}c^{-1}:i\in n,\;c\in H\setminus Ha\},\\
R_8^\circ&=\{c x_{i}^{-1}a^{-1}x_{i\ominus 1}^{-1}a^{-1}\cdots x_{i\ominus (n-1)}^{-1}a^{-1}c^{-1}:i\in n,\;c\in H\setminus Hx_{i}\},
\end{aligned}
$$
Depending on the location of $r$ is the set $\bigcup_{i=1}^8R_i^\circ$, we consider  separately four cases.
\smallskip

1. If $r\equiv st\in R_1^\circ\cup R^\circ_2$, then $s$ can be written as $s\equiv s'\sigma s''$ for some $s',s''\in F_*$ such that $1\le |s'|\le 2$ and $\sigma=a_ixa_{i\oplus 1}$ for some $i\in 18$ and $x\in G\setminus H$. The equalities 
$$l(gc_1)l(gc_2)\cdots l(gc_n)\equiv usv\equiv us'\sigma s''v,$$  $\sigma=a_ixa_{i\oplus 1}$ and Lemma~\ref{l:canrep} imply that the length $|us'|$ is even and also that $$a_i=h_0^{-1}lh_1,\;x=h_1^{-1}gc_jh_2,\;\mbox{ and }\; a_{i\oplus1}=h_2^{-1}lh_3$$
for some $j\in\{2,\dots,n-1\}$ and some $h_1,h_2,h_3$. It follows that $a_{i\oplus1}\in HlH=Ha_iH$, which contradicts the $H$-separatedness of the sequence $(a_k)_{k\in 18}$.
\smallskip

2. If $r\equiv st\in R_3^\circ\cup R^\circ_4$, then $s$ can be written as $s\equiv s'\sigma s''$ for some $s',s''\in F_*$ such that $1\le |s'|\le 2$ and $\sigma=x_iax_{i\oplus1}$ for some $i\in 18$. The equalities 
$$l(gc_1)l(gc_2)\cdots l(gc_n)\equiv usv\equiv us'\sigma s''v,$$  $\sigma=x_iax_{i\oplus1}$ and Lemma~\ref{l:canrep} imply that the length $|us'|$ is odd and also that 
$$x_i=h_0^{-1}gc_jh_1,\;a=h_1^{-1}lh_2,\;x_{i\oplus1}=h_2^{-1}gc_{j+1}h_3$$
for some $j\in \{1,\cdots n-2\}$ and some $h_1,h_2,h_3\in H$. It follows that $x_{i\oplus1}\in HgH=Hx_iH$, which contradicts the $H$-separatedness of the sequence $(x_k)_{k\in 18}$.
\smallskip

3. If $r\equiv st\in R_5^\circ\cup R^\circ_6$, then $s$ can be written as $s\equiv s'\sigma s''$ for some $s',s''\in F_*$ such that $1\le |s'|\le 2$ and $\sigma=a^{-1}_ixa_{i\ominus 1}$ for some $i\in 18$ and $x\in G\setminus H$. The equalities 
$$l(gc_1)l(gc_2)\cdots l(gc_n)\equiv usv\equiv us'\sigma s''v,$$  $\sigma=a_i^{-1}xa^{-1}_{i\ominus 1}$ and Lemma~\ref{l:canrep} imply that the length $|us'|$ is even and also that $$a_i^{-1}=h_0^{-1}lh_1,\;x=h_1^{-1}gc_jh_2,\;\mbox{ and }\; a^{-1}_{i\ominus1}=h_2^{-1}lh_3$$
for some $j\in\{2,\dots,n-1\}$ and some $h_1,h_2,h_3$. It follows that $a^{-1}_{i\ominus1}\in HlH=Ha^{-1}_iH$, which contradicts the $H$-separatedness of the sequence $(a_k)_{k\in 18}$.
\smallskip

4. If $r\equiv st\in R_7^\circ\cup R^\circ_8$, then $s$ can be written as $s\equiv s'\sigma s''$ for some $s',s''\in F_*$ such that $1\le |s'|\le 2$ and $\sigma=x_i^{-1}ax^{-1}_{i\ominus1}$ for some $i\in 18$. The equalities 
$$l(gc_1)l(gc_2)\cdots l(gc_n)\equiv usv\equiv us'\sigma s''v,$$  $\sigma=x_iax_{i\oplus1}$ and Lemma~\ref{l:canrep} imply that the length $|us'|$ is odd and also that 
$$x_i^{-1}=h_0^{-1}gc_jh_1,\;a=h_1^{-1}lh_2,\;x^{-1}_{i\ominus1}=h_2^{-1}gc_{j+1}h_3$$
for some $j\in \{1,\cdots n-2\}$ and some $h_1,h_2,h_3\in H$. It follows that $x_{i\ominus1}\in HgH=Hx^{-1}_iH$, which contradicts the $H$-separatedness of the sequence $(x_k)_{k\in 18}$.
\end{proof}

By analogy we can prove that following modification of Lemma~\ref{l:amalgamation}.

\begin{lemma}\label{l:amalgamation2} Let $L,G,H$ be groups such that $L\cap G=H$ and the subgroup $H$ is malnormal in $L$. For every $H^\pm$-separated sequence $(a_i)_{i\in 18}$ in $L\setminus H$ there exists a group $M$ such that
\begin{enumerate}
\item $G$ and $L$ are subgroups of $M$;
\item the unique homomorphism $q:L*_HG\to M$ extending the identity embedding $L\cup G\to M$ is surjective and its kernel coincides with the smallest normal subgroup of $L*_H G$ containing the set $\{a_0xa_1x\dots a_{17}x:x\in G\setminus H\}$;
\item the subgroup $G$ is malnormal in $M$;
\item the sequence $(a_k)_{k\in 18}$ is $G^\pm$-separated;
\item $e=a_0xa_1x\cdots a_{17}x$ in $M$ for every $x\in G\setminus H$;
\item $e\ne lgc_1lgc_2\cdots lgc_n$ for every $l\in L\setminus H$, $g\in G\setminus H$, and $c_1,\dots,c_n\in H$. 
\end{enumerate}
\end{lemma}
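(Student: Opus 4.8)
The plan is to repeat the proof of Lemma~\ref{l:amalgamation} almost verbatim, working with the smaller relator set
$$R=\{a_0xa_1x\cdots a_{17}x:x\in G\setminus H\}\subseteq F_*=L*_HG$$
and discarding throughout the single word $b^{-1}x_0ax_1a\cdots x_{17}a$ together with the auxiliary data $(x_k)_{k\in18}$, $a$ and $b$, which no longer occur. Note first that, since $H$ is malnormal in $L$, every element of $L\setminus H$ is $H$-malnormal; in particular $(a_k)_{k\in18}$ is an $H^\pm$-separated $H$-malnormal sequence in $L\setminus H$, which is all that the cited facts about it require.

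The main point to check is that the symmetrized hull $R^\circ$ still satisfies the condition $C'(\tfrac17)$. By Corollary~\ref{c:sym-hull} (applied to each word $a_0xa_1x\cdots a_{17}x$, $x\in G\setminus H$, and using that the symmetrized hull of a union of weakly cyclically reduced elements is the union of their symmetrized hulls), $R^\circ$ coincides with the subfamily $R_1\cup R_2\cup R_5\cup R_6$ of the eightfold union $\bigcup_{i=1}^8R_i$ appearing in the proof of Lemma~\ref{l:C'}; the families $R_3,R_4,R_7,R_8$ vanish, since they originate from the discarded word. Hence it suffices to rerun only those cases of the proof of Lemma~\ref{l:C'} in which both $r$ and $r'$ lie in $R_1\cup R_2\cup R_5\cup R_6$ — namely the cases 1.1, 1.2, 1.6, 2.2, 2.5, 2.6 and the cases with $r\in R_5\cup R_6$, which reduce to the preceding ones through the substitution $(a_k)\mapsto(a_{\ominus k}^{-1})$. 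An inspection of those cases shows that they use only the $H^\pm$-separatedness and the $H$-malnormality of $(a_k)_{k\in18}$ and never mention the sequence $(x_k)$ or the element $a$, so they remain valid; consequently every $\{r,r'\}$-piece between distinct $r,r'\in R^\circ$ has length $\le5<\tfrac17\cdot36$, and since every cyclically reduced element of $R^\circ$ has length $36$, the condition $C'(\tfrac17)$ holds.

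Granting this, let $N$ be the smallest normal subgroup of $F_*$ containing $R$, put $M=F_*/N$, and let $q:F_*\to M$ be the quotient homomorphism. Lemma~\ref{l:qLG} gives, after identifying $L$ and $G$ with their $q$-images, the assertions (1) and (2), and the length bound $|w|>4$ for every $w\in N\setminus\{e\}$. For (3): the set $R$ is half $H^-$-separated — with $\e=0$ the even-indexed letters of the canonical representation of $a_0xa_1x\cdots a_{17}x$ are $a_0,\dots,a_{17}$, and $a_s^{-1}\notin Ha_tH$ for $s\ne t$ by the $H^\pm$-separatedness of $(a_k)$ — and $(1-6\cdot\tfrac17)\cdot36=\tfrac{36}{7}>4$, so Lemma~\ref{l:malnormal} yields that $G$ is malnormal in $M$. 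Assertions (4), (5), (6) are then proved exactly as in Lemma~\ref{l:amalgamation}: (4) follows from the length bound for nontrivial elements of $N$; (5) is immediate from $R\subseteq N$; and for (6) one applies Lemma~\ref{l:large-piece} to a hypothetical relation $lgc_1\cdots lgc_n\in N$ and runs through the possible positions of the cyclically reduced $r\in R^\circ$, where now only $r\in R_1^\circ\cup R_2^\circ$ and $r\in R_5^\circ\cup R_6^\circ$ occur and each produces a forbidden equality $a_{i\oplus1}\in Ha_iH$, respectively $a_{i\ominus1}^{-1}\in Ha_i^{-1}H$, contradicting the $H^\pm$-separatedness of $(a_k)_{k\in18}$.

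The only step demanding genuine care is the bookkeeping of the second paragraph: verifying that the small-cancellation estimate for $R^\circ$ really does go through using only the sequence $(a_k)$, i.e. that neither the discarded word nor the sequence $(x_k)$ plays any role in the surviving cases. Everything else is a transcription of the proof of Lemma~\ref{l:amalgamation} with a strictly smaller relator set, and is in fact shorter.
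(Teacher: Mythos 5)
Your proposal is correct and is precisely the argument the paper intends: the paper gives no details for Lemma~\ref{l:amalgamation2}, stating only that it is proved ``by analogy'' with Lemma~\ref{l:amalgamation}, and you carry out exactly that analogy — the symmetrized hull of the smaller relator set is $R_1\cup R_2\cup R_5\cup R_6$, the surviving cases of Lemma~\ref{l:C'} use only the $H^\pm$-separatedness and (automatic, by malnormality of $H$ in $L$) $H$-malnormality of $(a_k)_{k\in 18}$, and parts (1)--(6) then follow by repeating the proof of Lemma~\ref{l:amalgamation} with the word $b^{-1}x_0ax_1a\cdots x_{17}a$ discarded. The only blemish is that your case list omits the mixed cases of type 1.5 (i.e.\ $r\in R_1$, $r'\in R_5$, and its symmetric counterparts), but these are dismissed outright by the parity argument via Lemma~\ref{l:canrep} and involve neither $(x_k)$ nor $a$, so nothing is affected.
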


%\begin{lemma}\label{l:inf-gen} Let $H$ be a subgroup of a group $G$. For every subset $A\subseteq G$ with $\dim(H;G)+\dim(A;S)$ there exists an $H^\pm$-separated sequence $(x_i)_{i\in \w}$ in $A\setminus H$.
%\end{lemma} 

%\begin{proof} The sequence $(x_i)_{i\in\w}$ will be constructed by induction. We start the inductive construction selecting any element $x_0\in A\setminus H$. Such an element exists since $\dim(H;G)<\dim(A;G)$. Assume that for some $n\in\IN$ we have construct an $H^\pm$-separated sequence $(x_i)_{i\in n}$ in $A\setminus H$.  Consider the subgroup $K$ of $G$ generated by the set $S\cup\{x_i\}_{i\in n}\cup\{x_i^{-1}\}_{i\in n}$. If $|G|$ is uncountable, then $|K|<|G|=|A|$. If $|G|$ is countable, then $K$ is finitely generated. In both cases, there exists an element $x_n\in A\setminus K$. It is clear that the sequence $(x_i)_{i\le n}$ is $H^\pm$-separated.
%\end{proof}

\section{An embedding lemma}\label{l:EL}

In this section we prove Embedding Lemma~\ref{l:EL}, which will be used as an iductive step in the construction of the $36$-Shelah group in Theorem~\ref{t:shelah}. In the proof of this lemma we shall apply Lemma~\ref{l:dim} on the existence of $H^\pm$-separated sequences in sets of large dimension.

For a subset $A$ of a semigroup $X$ we denote by $\dim(A;X)$  the smallest cardinality of a subset $B\subseteq S$ such that every element $a\in A$ can be written as $a=b_1\cdots b_n$ for some $b_1,\dots,b_n\in B$. It is clear that $\dim(A;X)\le |A|$, and $\dim(A;X)=|A|$ if $A$ is uncountable. We shall write $\dim(X)$ instead of $\dim(X;X)$.
 
\begin{lemma}\label{l:dim} Let $H$ be a subgroup of a group $X$, $A$ be a subset of $X$, and $n\in\IN$. If $\dim(A;X)+2n-2<\dim(A;X)$, then there exists an $H^\pm$-separated sequence $(x_i)_{i\in n}$ in $A\setminus H$.
\end{lemma}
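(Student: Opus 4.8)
The plan is to construct the sequence greedily, adding one term at a time, and at each stage to control the set of elements that must be avoided by covering it with the finite products of a small fixed set of generators. Write $\mu=\dim(H;X)$; the hypothesis is meant to read $\mu+2n-2<\dim(A;X)$ (with $\dim(H;X)$ in place of the first occurrence of $\dim(A;X)$). Fix once and for all a set $S\subseteq X$ of cardinality $\mu$ witnessing $\dim(H;X)=\mu$, i.e.\ every element of $H$ is a product of finitely many members of $S$.

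I would then build $x_0,\dots,x_{n-1}\in A\setminus H$ by recursion so that $x_j\notin Hx_iH\cup Hx_i^{-1}H$ whenever $i<j$. Assume $x_0,\dots,x_{k-1}$ have already been chosen, where $k<n$. The main step is to estimate the dimension of
$$B_k:=H\cup\bigcup_{i<k}\bigl(Hx_iH\cup Hx_i^{-1}H\bigr).$$
Rather than bounding $\dim(B_k;X)$ by the sum of the dimensions of its $2k+1$ pieces (which is far too wasteful), I would observe that every element of $B_k$ is a finite product of members of the single set $S_k:=S\cup\{x_i,x_i^{-1}:i<k\}$: any $h\in H$ is such a product because $S\subseteq S_k$ (and $e=x_0x_0^{-1}$ if one insists on nonempty products and $k\ge 1$), while any $hx_ih'$ or $hx_i^{-1}h'$ with $h,h'\in H$ is obtained by inserting the single letter $x_i$ or $x_i^{-1}\in S_k$ between two such products. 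Consequently $\dim(B_k;X)\le|S_k|\le\mu+2k\le\mu+2n-2<\dim(A;X)$. Since the inclusion $A\subseteq B_k$ would force $\dim(A;X)\le\dim(B_k;X)$, we conclude $A\not\subseteq B_k$, so I would choose $x_k\in A\setminus B_k$; then $x_k\in A\setminus H$ and $x_k\notin Hx_iH\cup Hx_i^{-1}H$ for every $i<k$, as desired.

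It remains to check that the resulting sequence $(x_i)_{i\in n}$ is $H^\pm$-separated, i.e.\ that $x_i\notin Hx_jH\cup Hx_j^{-1}H$ for all distinct $i,j\in n$. If $j<i$, this holds directly by the construction. If $i<j$, the construction gives $x_j\notin Hx_iH\cup Hx_i^{-1}H$, and applying the inversion map $g\mapsto g^{-1}$ yields the equivalences $x_i\in Hx_jH\Leftrightarrow x_j\in Hx_iH$ and $x_i\in Hx_j^{-1}H\Leftrightarrow x_j\in Hx_i^{-1}H$, whence again $x_i\notin Hx_jH\cup Hx_j^{-1}H$. I do not expect a genuine obstacle: the construction is a routine finite greedy recursion, and the only point that really requires care is the second step, namely recognizing that $B_k$—a union of $H$ and $2k$ double cosets—is nonetheless covered by the products of a set of size only $\mu+2k$, which is exactly what keeps the dimension count below $\dim(A;X)$ and makes the recursion go through.
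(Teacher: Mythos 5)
Your proof is correct and is essentially the paper's own argument: a greedy recursion in which $x_k$ is chosen in $A$ outside the subsemigroup generated by a set of size $\dim(H;X)$ witnessing $\dim(H;X)$ together with $\{x_i,x_i^{-1}:i<k\}$, with the same cardinality count $\dim(H;X)+2k\le\dim(H;X)+2n-2<\dim(A;X)$ (and you correctly read the misprinted hypothesis as $\dim(H;X)+2n-2<\dim(A;X)$). The only difference is cosmetic: the paper phrases the forbidden set as that subsemigroup, while you cover the union $H\cup\bigcup_{i<k}(Hx_iH\cup Hx_i^{-1}H)$ by products from the same generating set, and you make the inversion symmetry explicit where the paper leaves it implicit.
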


\begin{proof}  By the definition of $\dim(B;X)$, there exists a set $B\subseteq X$  of cardinality $|B|=\dim(H;X)$ such that $H$ is contained in the subsemigroup of $X$, generated by the set $B$. For every $k\in n$ choose inductively an element $x_k\in A\setminus B_k$ where $B_k$ is the subsemigroup of $X$, generated by the set $B\cup\{x_i\}_{i\in k}\cup\{x_i^{-1}\}_{i\in k}$. Since $\dim(B_k)\le |B|+2k=\dim(H)+2k\le \dim(H)+2(n-1)<\dim(A)$, the element $x_k$ does exist. It follows from $H\cup \bigcup_{i\in k}(Hx_iH\cup Hx_i^{-1}H)\subseteq B_k$ that $x_k\notin H\cup \bigcup_{i\in k}(Hx_iH\cup Hx_i^{-1}H)$, which implies that the sequence $(x_i)_{i\in n}$ is $H^\pm$-separated.
\end{proof}

\begin{lemma}\label{l:EL} Let $G$ be an infinite group and $\mathcal S\subseteq\{S\subseteq G:\dim(S;G)=|G|\}$ be a set of cardinality $1\le |\mathcal S|\le|G|$. Then $G$ is a malnormal subgroup of a group $L$ containing an element $\lambda\in L\setminus G$ of order $51$ such that  
\begin{enumerate}
\item the group $L$ is generated by the set $\{\lambda\}\cup G$;
\item $\lambda g\lambda^2\lambda^3g\cdots \lambda^{18}g=e$ for every $g\in G$;
\item for every $c\in G$, $a\in L\setminus G$ and $S\in \mathcal S$ there are elements $x_0,\dots,x_{17}\in S$ such that $c=x_0ax_1ax_2a\cdots x_{17}a$;
\item  for any subset $C\subseteq G$  of cardinality  $|C|<\mathrm{cf}(|G|)$ and every element $l\in L\setminus G$, there exists an element $g\in G$ such that $e\ne lgc_1lgc_2\cdots lgc_n$ for all  $c_1,c_1,\dots,c_n\in C$;
\item if $|G|=\w$, then any subset $A\subseteq G$ with $\dim(A;L)<\w$ has $\dim(A;G)<\w$.
\end{enumerate} 
\end{lemma}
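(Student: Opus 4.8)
The plan is to build $L$ in two stages. First, set up the ambient free product with amalgamation in which the relator $\lambda g\lambda^2\cdots\lambda^{18}g=e$ will live: let $\Lambda=\langle\lambda\rangle$ be a cyclic group of order $51$, and let $H$ be a suitable common ``amalgam'' subgroup. Since the relators in item (2) involve only powers of $\lambda$ interleaved with a single variable $g\in G$, I would take $H=\{e\}$ (so that the free product with amalgamation $\Lambda *_{\{e\}}G=\Lambda * G$ is just the ordinary free product), noting that $H=\{e\}$ is trivially malnormal in $\Lambda$. The elements $a_k=\lambda^{k+1}$ for $k\in 18$ form a sequence in $\Lambda\setminus H$, and since the nonzero powers $\lambda,\lambda^2,\dots,\lambda^{18}$ are pairwise distinct and none is the inverse of another (this is where the specific order $51$ and the bound $18$ matter: we need $\lambda^{i}\neq\lambda^{\pm j}$ for distinct $i,j\in\{1,\dots,18\}$, i.e. $i\pm j\not\equiv 0\pmod{51}$, which holds because $|i\pm j|\le 36<51$), the sequence $(a_k)_{k\in 18}$ is $H^\pm$-separated in $\Lambda\setminus H$. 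Now I would apply Lemma~\ref{l:amalgamation2} (or rather a transfinite iteration of it, see below) with $\Lambda$ in the role of $L$, $G$ in the role of $G$, and this sequence $(a_k)$, together with a bookkeeping argument handling items (3) and (4).

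The substantive point is that a \emph{single} application of an amalgamation lemma only kills one family of relators and only imposes the divisibility/solvability condition (3) for a single triple $(c,a,S)$ at a time — and moreover, item (3) asks us to solve $c=x_0ax_1a\cdots x_{17}a$ with the $x_i$ ranging over a prescribed set $S$ with $\dim(S;G)=|G|$, which is exactly the hypothesis that feeds Lemma~\ref{l:C'} (via Lemma~\ref{l:dim}) with an $H^\pm$-separated sequence $(x_i)_{i\in 18}$ drawn from $S$. So I would run a transfinite recursion of length $|G|$: enumerate as $(c_\alpha,a_\alpha,S_\alpha)_{\alpha<|G|}$ all triples from (the partially-built groups) and at stage $\alpha$ apply Lemma~\ref{l:amalgamation} with $L$ the group built so far, $b=c_\alpha$, $a=a_\alpha$, the $a$-sequence the fixed $(\lambda^{k+1})_{k\in 18}$, and the $x$-sequence an $H^\pm$-separated sequence inside $S_\alpha$ furnished by Lemma~\ref{l:dim} (using $\dim(S_\alpha;G)=|G|>2\cdot 18-2$). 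Taking the direct limit over all stages yields $L$; conclusion (3) of each Lemma~\ref{l:amalgamation} application gives item (3) here, conclusion (6) gives item (4) (the choice of $g$ comes from the surjectivity of the map $G\setminus H\to$ solutions together with a counting argument against $|C|<\mathrm{cf}(|G|)$), conclusion (3) of the lemma (malnormality of $G$ in $M$) is propagated through the limit to give malnormality of $G$ in $L$, and item (1) and (2) are immediate from the construction. The order of $\lambda$ being exactly $51$ requires checking that no relator we impose collapses $\Lambda$ — this follows from Lemma~\ref{l:qLG}(2), the injectivity of $q\!\restriction_{L\cup G}$, at each stage.

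Items (4) and (5) require a little more care. For (4), with $|C|<\mathrm{cf}(|G|)$ fixed and $l\in L\setminus G$ given, I would locate the stage $\beta$ by which $l$ and all of $C$ have appeared; conclusion (6) of the amalgamation lemmas says $e\neq lgc_1lgc_2\cdots lgc_n$ whenever $g\in G\setminus H$ and the $c_i\in H$ — but here $H$ is the amalgam subgroup \emph{at that stage}, which is larger than $\{e\}$, so the right statement is that for a cofinal set of $g$ this holds; the cofinality hypothesis on $|C|$ ensures that $C$ is swallowed at some stage below $|G|$ and that enough ``fresh'' $g\in G$ survive. For (5), when $|G|=\w$, I need that the iteration does not accidentally make some infinite-$\dim$ subset of $G$ finite-$\dim$ in $L$; this is a consequence of Lemma~\ref{l:qLG}(1) — elements of the normal subgroup $N$ at each stage have length $>4$ — so the canonical-form structure on $L*_HG$ restricted to $G$ is undisturbed, and $\dim(A;L)<\w$ forces $A$ to be generated by finitely many elements each of which, by the canonical-form uniqueness (Lemma~\ref{l:canrep}), already lies in the subgroup generated by a finite subset of $G$. \textbf{The main obstacle} I anticipate is verifying that the malnormality of $G$ in $M$ (conclusion (3) of the amalgamation lemmas) passes correctly through the transfinite limit and that at each stage the hypothesis ``$H$ malnormal in $L$'' of Lemma~\ref{l:amalgamation} is met — i.e. that the $G$ of stage $\alpha$ (which is the ``$H$'' relative to the fresh $\Lambda$-factor) remains malnormal in the previous $L$; this is exactly what Lemma~\ref{l:malnormal} was designed to supply, but threading its hypotheses ($C'(\lambda)$ with $\lambda=\tfrac17$, the length bound $(1-6\lambda)|r|=\tfrac{36}{7}>4$, and half $H^-$-separatedness, which needs the $H^\pm$-separatedness of \emph{both} $(a_k)$ and $(x_k)$) through every stage is the delicate bookkeeping that the proof will have to make precise.
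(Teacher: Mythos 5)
Your general toolkit is the right one (a single $\lambda$ of order $51$, the amalgamation lemmas, Lemma~\ref{l:dim} to extract $H^\pm$-separated $18$-tuples from sets of full dimension, and a transfinite bookkeeping over triples $(c,a,S)$), but the construction as you describe it cannot be carried out, and the missing device is exactly the heart of the paper's proof. You start by forming $\Lambda * G$ with $H=\{e\}$ and killing \emph{all} relators $\lambda x\lambda^2x\cdots\lambda^{18}x$ ($x\in G\setminus\{e\}$) in one application of Lemma~\ref{l:amalgamation2}. After that step the whole of $G$ already sits inside the group you have built, so at every later stage ``$L$ = the group built so far'' satisfies $L\cap G=G$: there is no proper amalgam $H$ left, no sequence in $G\setminus H$, and the relator $c^{-1}x_0ax_1a\cdots x_{17}a$ needed for item (3) has all of its letters inside a single factor. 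Lemma~\ref{l:amalgamation} (and the whole small-cancellation machinery of Sections 3--5, which controls injectivity on $L\cup G$, malnormality, the order of $\lambda$, and the non-relation (6)) applies only to relators that alternate across a free product with amalgamation, so you have no tool for imposing the item-(3) relations, and quotienting the built group by them directly gives you no control whatsoever. Your closing remark about ``the $G$ of stage $\alpha$ being the $H$ relative to the fresh $\Lambda$-factor'' also conflates the roles: in Lemma~\ref{l:amalgamation} the malnormality hypothesis is on $H$ inside the factor containing the $\lambda$-sequence, and introducing a fresh $\Lambda$-factor at each stage would destroy item (1) (a single $\lambda$ generating $L$ together with $G$).

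What the paper does instead, and what your proposal lacks, is a filtration of $G$ itself built jointly with the enumeration: subgroups $H_\alpha\subseteq G_\alpha\subseteq H_{\alpha+1}\subseteq\cdots$ with $\bigcup_\alpha H_\alpha=G$, where $H_\alpha$ is generated by $G_{<\alpha}$, $c_\alpha$ and $\supp(a_\alpha)\cap G$, and $G_\alpha$ is $H_\alpha$ together with an $H_\alpha^\pm$-separated tuple $(x_\alpha(i))_{i\in18}$ chosen inside $S_\alpha$ by Lemma~\ref{l:dim} (separated over $H_\alpha$, not over $\{e\}$). The ambient group is then built by \emph{alternating} amalgamations: $L_\alpha$ is a small-cancellation quotient of $M_{<\alpha}*_{G_{<\alpha}}H_\alpha$ (killing the type-(2) relators for the new elements $x\in H_\alpha\setminus G_{<\alpha}$, via Lemma~\ref{l:amalgamation2}), and $M_\alpha$ is a small-cancellation quotient of $L_\alpha*_{H_\alpha}G_\alpha$ (killing the type-(2) relators for $x\in G_\alpha\setminus H_\alpha$ together with the single relator $c_\alpha^{-1}x_\alpha(0)\check a_\alpha\cdots x_\alpha(17)\check a_\alpha$, via Lemma~\ref{l:amalgamation}). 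Because $L_\alpha\cap G=H_\alpha$ and $M_\alpha\cap G=G_\alpha$ are always proper pieces of $G$, every new relator genuinely alternates across the current amalgam, the inductive hypotheses (malnormality of the amalgam, $H_\alpha^\pm$- and $G_\alpha^\pm$-separatedness of $(\lambda^{k+1})_{k\in18}$, condition (r) giving item (4), and condition (e) giving item (5) when $|G|=\omega$) can be propagated, and the union over $\alpha$ yields $L$ with all five properties. Without this two-fold filtration your recursion stalls at the second step, so the proposal has a genuine gap rather than a repairable bookkeeping issue.
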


\begin{proof} Let  $\kappa$ be the cardinality of the group $G$. Let $L_0$ be a cyclic group of order $|L_0|=51$ such that  $L_0\cap G=\{e\}$ where $e=ee$ is the identity element of the groups $L_0$ and $G$. Let $\lambda$ be the generator of the cyclic group $L_0$. Since $\lambda$ has order 51, the sequence $(\lambda^{i+1})_{i\in 18}$ is $\{e\}^\pm$-separated in $L_0$.

Lemma~\ref{l:canrep} implies that every element $x$ of the free product $L_0*G=L_0*_{\{e\}}G$ has a unique canonical representation $x_1\cdots x_n$, which allows us to define the set $$\supp(x)\defeq\{x_1,\dots,x_n\}\subseteq L_0\cup G$$ called the {\em support} of $x$.

Let $\{(c_\alpha,a_\alpha,S_\alpha)\}_{\alpha\in\kappa}$ be an enumeration of the set $G\times (L_0*G)\times  \mathcal S$ such that $c_0=e=a_0$. By induction we shall define increasing transfinite sequences $(H_\alpha)_{\alpha\in \kappa}$, and $(G_{\alpha})_{\alpha\in\kappa}$  of subgroups of $G$ and a transfinite sequence $(x_{\alpha})_{\alpha\in\kappa}$ of functions $x_\alpha:18\to G$ such that for every $\alpha\in\kappa$ the following conditions are satisfied:
\begin{enumerate}
\item[(a)] the subgroup $H_\alpha$ is generated by the set $\{c_\alpha\}\cup(\supp(a_\alpha)\cap G)\cup G_{<\alpha}$ where $G_{<\alpha}=\bigcup_{\beta<\alpha}G_\beta$;
\item[(b)] $\{x_\alpha(i)\})_{i\in 18}\subseteq S_\alpha\setminus H_{\alpha}$ and the sequence $(x_\alpha(i))_{i\in 18}$ is $H_\alpha^\pm$-separated;
\item[(c)] $G_{\alpha}$ is generated by the set $H_{\alpha}\cup\{x_\alpha(i)\}_{i\in 18}$.
\end{enumerate}
To start the inductive construction, let $H_0=\{e\}$ and observe that $H_0$ is generated by the set $\{c_0\}\cup(\supp(a_0)\cap K)\cap G_{<0}=\{e\}\cup (\supp(e)\cap K)\cup \emptyset=\{e\}$ and hence $\dim(H_0;G)=1<\dim(S_0;G)=|G|$. By Lemma~\ref{l:dim}, there exists a function $x_\alpha:18\to G\setminus H_0$ such that the sequence $(x_\alpha(i))_{i\in 18}$ is $H_0^\pm$-separated. Let $G_0$ be the subgroup of $G$, generated by the set $\{x_0(i)\}_{i\in 18}$. 

Now assume that for some ordinal $\alpha\in\kappa$ we have constructed increasing  sequences of subgroups $(H_\beta)_{\beta<\alpha}$ and $(G_\alpha)_{\beta<\alpha}$ and a sequence of  functions $(x_\beta:18\to G)_{\beta<\alpha}$  satisfying the inductive conditions (a)--(c). Let $G_{<\alpha}=\bigcup_{\beta<\alpha}G_\beta$ and define the subgroup $H_\alpha$ by the condition (a). The inductive conditions (a)--(c) imply that $\dim(H_\alpha;G)+34<|G|=\dim(S_\alpha;G)$. By Lemma~\ref{l:dim}, there exists a function $x_\alpha:18\to S_\alpha\setminus H_\alpha$ such that the sequence $(x_i)_{i\in 18}$ is $H_\alpha^\pm$-separated. Finally, let $G_\alpha$ be the subgroup of $G$, generated by the set $H_\alpha\cup\{x_\alpha(i)\}_{i\in 18}$. This completes the inductive step and also the inductive construction of the sequences $(H_\alpha)_{\alpha\in\kappa}$, $(G_\alpha)_{\alpha\in\kappa}$, and $(x_\alpha)_{\alpha\in\kappa}$. The inductive condition (a) implies that 
$$\bigcup_{\alpha\in\kappa}H_\alpha=\bigcup_{\alpha\in\kappa}G_\alpha=\{c_\alpha\}_{\alpha\in\kappa}=G.$$
%For every ordinal $\alpha\in\kappa$, let $H_{<\alpha}=\bigcup_{\beta\in\alpha}H_\beta$.
\smallskip

Next, we shall inductively construct two increasing transifinite sequences of groups $(L_\alpha)_{\alpha\in\kappa}$ and $(M_\alpha)_{\alpha\in\kappa}$, a transfinite sequence of elements $(\check a_{\alpha})_{\alpha\in\kappa}\in\prod_{\alpha\in\kappa}L_{\alpha}$ and two transfinite sequences of sets $(R_\alpha)_{\alpha\in\kappa}$  and $(Q_\alpha)_{\alpha\in \kappa}$ such that for every $\alpha\in\kappa$ the following conditions are satisfied:
\begin{itemize}
\item[(d)] $M_{<\alpha}\cup H_\alpha\subseteq L_\alpha$ where $M_{<\alpha}=\bigcup_{\beta<\alpha}M_\beta$;
\item[(e)] $L_\alpha\cap G=H_\alpha$ and the subgroup $H_\alpha$ is malnormal in $L_\alpha$;
\item[(f)] $R_\alpha=\{\lambda x\lambda^2x\lambda^3x\cdots \lambda^{18}x:x\in H_\alpha\setminus G_{<\alpha}\}\subseteq M_{<\alpha}*_{G_{<\alpha}}H_\alpha$;
\item[(g)] the unique homomorphism $M_{<\alpha}*_{G_{<\alpha}}H_\alpha\to L_\alpha$ that extends the identity inclusion $M_{<\alpha}\cup H_\alpha\to L_\alpha$ is surjective and its kernel coincides with the smallest normal subgroup of $M_{<\alpha}*_{G_{<\alpha}}H_\alpha$ that contains the set $R_\alpha$;
\item[(h)] the group $L_\alpha$ is generated by its subset $\{\lambda\}\cup H_\alpha$;
\item[(i)] the sequence $(\lambda^{i+1})_{i\in 18}$ is $H_\alpha^\pm$-separated in $L_\alpha$;
\item[(j)] $\check a_\alpha=q_{\alpha}(a_\alpha)\in L_\alpha$ where $q_{\alpha}:L_0*H_{\alpha}\to L_{\alpha}$ is a unique homomorphism extending the identity inclusion $L_0\cup H_{\alpha}\to L_{\alpha}$;
\item[(k)] if $\check a_\alpha\in H_{\alpha}$, then $Q_{\alpha}=\{\lambda x\lambda ^2x\dots \lambda^{18}x:x\in G_\alpha\setminus H_\alpha\}\subseteq L_\alpha*_{H_\alpha}G_\alpha$;
\item[(l)] if $\check a_\alpha\notin H_{\alpha}$, then  $Q_{\alpha}=\{\lambda x\lambda^2x\dots \lambda^{18}x:x\in G_\alpha\setminus H_\alpha\}\cup\{c_\alpha^{-1} x_\alpha(0)\check a_\alpha x_\alpha(1)\check a_\alpha\cdots x_\alpha(17)\check a_\alpha\}$;
\item[(m)] $L_\alpha\cup G_\alpha\subseteq M_\alpha$ and the unique homomorphism $L_{\alpha}*_{H_{\alpha}} G_{\alpha}\to M_{\alpha}$ extending the identity embedding $L_\alpha\cup G_\alpha\to M_\alpha$ is surjective and its kernel coincides with the smallest normal subgroup of $L_{\alpha}*_{H_{\alpha}} G_{\alpha}$ containing the set $Q_{\alpha}$;
\item[(n)] $M_\alpha\cap G=G_\alpha$ and the subgroup $G_\alpha$ is malnormal in $M_\alpha$;
\item[(o)] the group $M_\alpha$ is generated by its subset $\{\lambda\}\cup G_\alpha$;
\item[(p)] the sequence $(\lambda^{i+1})_{i\in 18}$ is $G_\alpha^\pm$-separated in $M_\alpha$;
\item[(r)] for every  $l\in L_\alpha\setminus H_\alpha$ and $g\in G_\alpha\setminus H_\alpha$ we have $e\ne lg h_1lg h_2\cdots lg h_n$ in $M_\alpha$ for arbitrary elements $h_1,h_2,\dots,h_n\in H_\alpha$.
\end{itemize}

To start the inductive construction, put $R_0=N_0=\{e\}$, $\check a_0=e$, and let $M_0=L_0$.  Then $M_0\cap K=\{e\}=H_0$ and the subgroup $H_0=\{e\}$ is malnormal in $M_0$. Assume that for some positive ordinal $\alpha$ we have constructed increasing sequences of groups $(L_\beta)_{\beta<\alpha}$ and $(M_\beta)_{\beta<\alpha}$ satisfying the inductive conditions (d)--(r). Consider the groups $M_{<\alpha}=\bigcup_{\beta<\alpha}M_\beta$ and $G_{<\alpha}=\bigcup_{\beta<\alpha}G_\beta$. The inductive condition (o) implies that the group $M_{<\alpha}$ is generated by the set $L_0\cup G_{<\alpha}$. The inductive condition (n) implies that $M_{<\alpha}\cap K=G_{<\alpha}$ and $G_{<\alpha}$ is a malnormal subgroup of $M_{<\alpha}$. Let $R_\alpha$ be the subset of $M_{<\alpha}*_{G_{<\alpha}}H_\alpha$, defined in the inductive condition (f). The inductive condition (p) implies that the sequence $(\lambda^{i+1})_{i\in 18}$ is $G_{<\alpha}^\pm$-separated in the group $M_{<\alpha}$.  By Lemma~\ref{l:amalgamation2}, there exists a group $L_\alpha$ satisfying the inductive conditions (d), (e), (g), (i). Since the group $M_{<\alpha}$ is generated by the set $L_0\cup G_{<\alpha}$, the inductive condition  (g) implies that the group $L_\alpha$ is generated by the set $L_0\cup H_\alpha$ and hence satisfies the inductive condition (h).
\smallskip

Let $q_\alpha:L_0*H_\alpha\to L_\alpha$ be a unique homomorphism extending the identity inclusion $L_0\cup H_\alpha\to L_\alpha$. Let $\check a_\alpha=q_\alpha(a_\alpha)$. Consider the subset $Q_\alpha$ of $L_\alpha*_{H_\alpha}G_\alpha$, defined by the inductive conditions (k), (l). By the inductive condition (i), the sequence $(\lambda^{i+1})_{i\in 18}$ is $H_\alpha^\pm$-separated in $L_\alpha\setminus H_\alpha$ and by the choice of the function $x_\alpha:18\to S_\alpha \setminus H_\alpha$, the sequence $(x_\alpha(i))_{i\in 18}$ is $H_\alpha^\pm$-separated in $G_\alpha\setminus H_\alpha$.  Lemmas~\ref{l:amalgamation} and \ref{l:amalgamation2} yield a group $M_\alpha$ satisfying the inductive conditions (m), (n), (p), (r). The inductive conditions (h) and (m) imply that the group $M_\alpha$ satisfies the inductive condition (o).
 This completes the inductive step.
\smallskip

After completing the inductive construction, consider the group $L=\bigcup_{\alpha\in\kappa}L_\alpha=\bigcup_{\alpha<\kappa}M_\alpha$. The inductive conditions (a) and (d) ensure that $G=\bigcup_{\alpha\in\kappa}H_\alpha\subseteq \bigcup_{\alpha\in \kappa}L_\alpha=L$, and the inductive condition (e) implies that the subgroup $G$ is malnormal in $L$. It remains to check that the group $L$ and the generator $\lambda$ of the group $L_0\subseteq L$ satisfy the conditions (1)--(5) of Lemma~\ref{l:EL}.
\smallskip

1. The inductive conditions (a) and (h)  imply that the group $L$ is generated by the set $\{\lambda\}\cup G$. 
\smallskip

2. Let $g\in G$ be any element. If $g=e$, then $\lambda g\lambda^2g\cdots \lambda^{18}g=\lambda^{153}=e$ as $\lambda$ has order 51.
If $g\ne e$, then there exists an ordinal $\alpha\in\kappa$ such that $g\in H_\alpha\setminus G_{<\alpha}$ or $g\in G_{\alpha}\setminus H_\alpha$. In this case the equality $\lambda g\lambda^2g\cdots\lambda^{18}g=e$ follows from the inductive conditions (f), (g) or (k), (l), (m).
\smallskip

3. To check the third condition of Lemma~\ref{l:EL}, take any $c\in G$, $a\in L\setminus G$ and $S\in\mathcal S$. Let $q:L_0*G\to L$ be the unique homomorphism extending the identity inclusion $L_0\cup G\to L$. Since the group $L$ is generated by the set $L_0\cup G$, there exists an element $a'\in L_0*G$ such that $a=q(a')$. Since $(c,a',S)\in G\times(L_0*G)\times\mathcal S=\{(c_\alpha,a_\alpha,S_\alpha):\alpha\in\kappa\}$, there exists $\alpha\in\kappa$ such that $(c_\alpha,a_\alpha,S_\alpha)=(c,a',S)$. Let $(x_i)_{i\in 18}=(x_\alpha(i))_{i\in 18}$.  Observe that $\check a_\alpha=q_\alpha(a_\alpha)=q(a')=a\in L\setminus G\subseteq L\setminus H_\alpha$ and hence 
$$c^{-1}x_0ax_1a\cdots x_{17}a=c_\alpha^{-1}x_\alpha(0)\check a_\alpha x_\alpha(1)\check a_\alpha\cdots x_{\alpha}(17)\check a_\alpha\in Q_\alpha$$and 
$c=x_0ax_1\cdots x_{17}a$ in $M_\alpha\subseteq L$ by the inductive condition (m).
\smallskip

4. Given any subset $C\subseteq G$ of cardinality $|C|<\mathrm{cf}(\kappa)$ and any $l\in L\setminus G$, find an ordinal  $\alpha\in\kappa$ such that $C\subseteq H_\alpha$ and $l\in L_\alpha\setminus G=L_\alpha\setminus H_\alpha$. Choose any element $g\in G_\alpha\setminus H_\alpha$. It exists because $\{x_\alpha(i)\}_{i\in 18}\subseteq G_\alpha\setminus H_\alpha$. By the inductive condition (r), for every elements $h_1,\dots,h_n\in C\subseteq H_\alpha$ we have $e\ne lgh_1\cdots lgh_n$ in $M_\alpha\subseteq L$.
\smallskip

5. Assume that $|G|=\w$ and $A\subseteq G$ is a set such that $\dim(A;L)<\w$. Then $A$ is contained in a subsemigroup $S$ of $L$, generated by some finite subset $F$. Since $L=\bigcup_{\alpha\in\kappa}L_\alpha$, there exists an ordinal $\alpha\in\kappa$ such that $F\subseteq L_\alpha$ and hence $A\subseteq S\subseteq L_\alpha$. Then $A\subseteq G\cap L_\alpha=H_\alpha$, by the inductive condition (e). Since $\kappa=|G|=\w$, the inductive conditions (a) and (c) ensure that the subgroup $H_\alpha$ of $G$ is finitely generated and hence $\dim(A;G)\le\dim(H_\alpha;G)<\w$.
\end{proof} 

We shall also need the following embedding result.

\begin{lemma}\label{l:emb0} Let $\kappa$ be an infinite cardinal. Every group $G$ of cardinality $|G|\le\kappa$ is isomorphic to a malnormal subgroup of a group $M$ such that $\dim(M)=|M|=\kappa$.
\end{lemma}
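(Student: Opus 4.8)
The plan is to let $F$ be the free group of rank $\kappa$, chosen with $G\cap F=\{e\}$, and to take $M:=G*_{\{e\}}F$, the free product of $G$ and $F$ with amalgamation over the trivial subgroup (as in Section~\ref{s:SCT}); I identify $G$ with its canonical image in $M$. Since $M$ is a free product of two groups of cardinality $\le\kappa$ and $\kappa$ is infinite, $|M|\le\kappa$, while $|M|\ge|F|=\kappa$ because $F$ embeds into $M$; hence $|M|=\kappa$ (using $|G|\le\kappa$).

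Next I would check that $G$ is malnormal in $M$, i.e., that a free factor of a free product is malnormal. Since the amalgamated subgroup is trivial, Lemma~\ref{l:canrep} shows that every element of $M$ has a \emph{unique} canonical representation; in particular every element of $G$ has length $\le1$, so no element of length $\ge2$ belongs to $G$. Fix $w\in M\setminus G$ and $g\in G$ with $wgw^{-1}\in G$; it suffices to prove $g=e$, so suppose $g\ne e$. Let $w_1\cdots w_n$ be the canonical representation of $w$; then $n\ge1$, and $n=1$ forces $w_1=w\in F\setminus\{e\}$. If $w_n\in F\setminus\{e\}$, then $w_1\cdots w_n\,g\,w_n^{-1}\cdots w_1^{-1}$ is a canonical representation of $wgw^{-1}$, so $|wgw^{-1}|=2n+1\ge3$, contradicting $wgw^{-1}\in G$. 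If $w_n\in G\setminus\{e\}$, then $n\ge2$ and $w_{n-1}\in F\setminus\{e\}$; writing $wgw^{-1}=w_1\cdots w_{n-1}(w_ngw_n^{-1})w_{n-1}^{-1}\cdots w_1^{-1}$ and using that conjugation by $w_n$ is injective on $G$, either $w_ngw_n^{-1}=e$ (whence $g=e$) or the displayed word is a canonical representation of $wgw^{-1}$ of length $2n-1\ge3$, again contradicting $wgw^{-1}\in G$. So $g=e$ in every case, and $G\cap wGw^{-1}=\{e\}$ for all $w\in M\setminus G$, i.e., $G$ is malnormal in $M$.

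Finally I would compute $\dim(M)$. The map $M\to F$ that is the identity on $F$ and trivial on $G$ is a surjective retraction, so the image of any generating set of the group $M$ generates $F$; since a subset of $F$ of cardinality $<\kappa$ lies in the subgroup generated by fewer than $\kappa$ of the free generators and is therefore proper, every generating set of the group $M$ has cardinality $\ge\kappa$. A subset of $M$ realising $\dim(M)$ generates the subsemigroup $M$, hence also the group $M$, so $\dim(M)\ge\kappa$; combined with $\dim(M)\le|M|=\kappa$ this gives $\dim(M)=|M|=\kappa$. (When $\kappa$ is uncountable the equality $\dim(M)=|M|$ is already noted after the definition of $\dim$; the retraction argument is needed only for $\kappa=\w$.) I do not expect any genuine obstacle here: the single step beyond cardinal arithmetic is the malnormality of a free factor, and it is disposed of by the elementary normal-form computation sketched above.
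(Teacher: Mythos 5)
Your proof is correct, and its skeleton coincides with the paper's: the paper also takes $M=L*_{\{e\}}G$ for a group $L$ with $\dim(L)=|L|=\kappa$, gets $|M|=\kappa$ by cardinal arithmetic, and derives malnormality of the free factor $G$ from the uniqueness of canonical representations (Lemma~\ref{l:canrep}); your case analysis on the last letter of $w$ just fills in the details the paper leaves as a one-line sketch. Where you genuinely diverge is the verification of $\dim(M)=\kappa$, which is only delicate for $\kappa=\w$. The paper keeps $L$ arbitrary and argues by contradiction via a support/cancellation analysis inside the free product: a finite semigroup-generating set of $M$ would force the finite set of $L$-letters occurring in the supports of its elements to generate $L$, contradicting $\dim(L)=\w$. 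You instead specialize $L$ to a free group $F$ of rank $\kappa$ and use the retraction $M\to F$ that kills $G$: any semigroup-generating set of $M$ is a group-generating set, its image generates $F$, and a free group of rank $\kappa$ cannot be generated by fewer than $\kappa$ elements. Your retraction argument is uniform in $\kappa$ and avoids the cancellation analysis entirely, at the harmless cost of fixing a particular $L$; the paper's version yields the same conclusion for an arbitrary $L$ with $\dim(L)=|L|=\kappa$, which is marginally more general but not needed elsewhere in the paper.
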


\begin{proof} Take any group $L$ such that $L\cap G=\{e\}$ and $\dim(L)=|L|=\kappa$. Consider the free product $M=L*_{\{e\}}G$. Applying Lemma~\ref{l:canrep}, one can show that $G$ is a malnormal subgroup of $M$. It is clear that $|M|=\max\{|L|,|G|\}=\kappa$. 

It remains to prove that $\dim(M)=\kappa$. This equality is trivially true if the cardinal $\kappa$ is uncountable.

Next, consider the (more complicated) case of $\kappa=\w$. Assuming that $\dim(M)<\kappa=\w$, we can find a finite set $F\subseteq M$ generating the group $M$. By Lemma~\ref{l:canrep}, every element $x\in M$ has a unique canonical representation $x_1\cdots x_n$, which determines the {\em support} $\supp(x)=\{x_1,\cdots x_n\}$ of $x$. 

We claim that the finite set $E=\bigcup_{x\in F}(\supp(x)\cap L)$ generates the group $L$. Indeed, for any element $l\in L$, there exist elements $a_1,\dots,a_n\in F$ such that $l=a_1\cdots a_n$. Analyzing possible cancellations in $a_1\cdots a_n$ and applying Lemma~\ref{l:canrep}, we can prove that $l$ belongs to the subsemigroup of $L$, generated by the set $L\cap\bigcup_{i=1}^n\supp(a_i)\subseteq E$. Therefore, the group $L$ is finitely generated, which contradicts the choice of $L$. This contradiction shows that $\dim(M)=\w=\kappa$.
\end{proof}

\section{The main result}\label{s:main}

In this section we prove our main result, which is a more elaborated version of Theorem~\ref{t:main} announced in the Introduction.

\begin{theorem}\label{t:shelah} Let $\kappa$ be an infinite cardinal such that $\kappa^+=2^\kappa$. Every group $H$ of cardinality $|H|\le\kappa$ is isomorphic to a malnormal subgroup of a group $G$  such that 
\begin{enumerate}
\item $|G|=\kappa^+$;
\item $G$ is $36$-Shelah;
\item for every subset $X\subseteq G$ of cardinality $|X|<|G|$ there exists an element $g\in G$ such that $X\cap gXg^{-1}\subseteq\{e\}$;
\item the group $G$ is simple;
\item for every subset $X\subseteq G$ of cardinality $|X|<|G|$ there exists an element $a\in G$ of order $51$ such that $axa^2xa^3x\cdots a^{18}x=e$ and hence $\cov(X;\A_G)\le1$.
\item for every subset $C\subseteq G$ of cardinality $|C|<\mathrm{cf}(\kappa)$ there exists an element $x\in G$ such that $e\ne xc_1xc_2\cdots xc_n$ for all $c_1,\dots, c_n\in C$;
\item $G$ is projectively $\mathsf{T_{\!1}S}$-discrete;
\item $G$ is absolutely $\mathsf{T_{\!1}S}$-closed;
\item $\mathrm{cf}(\kappa)\le\cov(G;\A_G)\le\kappa$.
\end{enumerate}
\end{theorem}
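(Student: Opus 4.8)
The plan is to realise $G$ as the union of a continuous increasing chain $(G_\alpha)_{\alpha<\kappa^+}$ of groups, each of cardinality $\kappa$. I would let $G_0$ be a group containing $H$ as a malnormal subgroup with $\dim(G_0)=|G_0|=\kappa$, furnished by Lemma~\ref{l:emb0}; put $G_\alpha:=\bigcup_{\beta<\alpha}G_\beta$ at limit stages; and at a successor stage apply Lemma~\ref{l:EL} to $G_\alpha$ together with a family $\mathcal S_\alpha$ of subsets of $G_\alpha$ to obtain $G_{\alpha+1}$ (with its order-$51$ element $\lambda_\alpha$, malnormality of $G_\alpha$ in $G_{\alpha+1}$, etc.). Since $\kappa^+=2^\kappa$, each $\mathcal P(G_\gamma)$ has cardinality $\kappa^+$, so I would fix a bijection $\kappa^+\to\kappa^+\times\kappa^+$ whose first coordinate never exceeds its argument and use it to ``activate'' subsets: $\mathcal S_\alpha$ consists of $G_\alpha$ itself (to keep it nonempty) together with all subsets $S$ of some $G_\gamma$, $\gamma\le\alpha$, that have been activated by stage $\alpha$ and still satisfy $\dim(S;G_\alpha)=\kappa$. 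Only $\le\kappa$ sets are active at any stage, so this is legitimate input for Lemma~\ref{l:EL}, and every subset of every $G_\gamma$ becomes active and stays active thereafter; item (5) of Lemma~\ref{l:EL} is precisely what guarantees, when $\kappa=\w$, that activation is not spoiled later, since a $\kappa$-dimensional set stays $\kappa$-dimensional.

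With $G:=\bigcup_{\alpha<\kappa^+}G_\alpha$, the items are extracted as follows. Item (1) is clear, each step strictly enlarging a group of size $\kappa$. Malnormality is transitive and survives increasing unions, so $H$ and each $G_\alpha$ are malnormal in $G$; given $X\subseteq G$ with $|X|<|G|$ we have $X\subseteq G_\alpha$ for some $\alpha$, and any $g\in G\setminus G_\alpha$ witnesses $X\cap gXg^{-1}\subseteq\{e\}$, which is (3). For (5) and (6) I would take the least $\alpha$ with $X\subseteq G_\alpha$ (resp. $C\subseteq G_\alpha$) and invoke items (2) and (4) of Lemma~\ref{l:EL}, using $x:=\lambda_\alpha g$ in the case of (6); in (5) the resulting equation exhibits $X$ inside a single algebraic subset, so $\cov(X;\A_G)\le1$. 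For (2): if $|X|=|G|$, then for cofinally many $\alpha$ the set $X$ simultaneously acquires a new element $a\in X\cap(G_{\alpha+1}\setminus G_\alpha)$ and the set $X\cap G_\alpha$ lies in $\mathcal S_\alpha$; the latter holds because for all large $\alpha$ the set $X\cap G_\alpha$ is $\kappa$-dimensional in $G_\alpha$ — automatically when $\kappa$ is uncountable (it then has cardinality $\kappa$), and by a Fodor pressing-down argument on the club of limit stages when $\kappa=\w$ — so it was activated earlier. Then Lemma~\ref{l:EL}(3), applied with $S=X\cap G_\alpha$ and this $a$, gives $G_\alpha\subseteq X^{36}$, whence $X^{36}=G$. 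Finally (4) follows from (2) and (3): a normal $N\le G$ with $|N|=|G|$ equals $N^{36}=G$ by (2), while $e\ne N$ normal with $|N|<|G|$ is impossible, since (3) yields $g$ with $N\cap gNg^{-1}\subseteq\{e\}$ whereas normality forces $gNg^{-1}=N$.

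It remains to treat the topological items, which are the heart of the matter. By Lemma~\ref{l:ac=>pd}, (7) is a formal consequence of (8), and (8) implies $G$ is zero-closed, so Lemma~\ref{l:zero-closed} gives $\cov(G;\A_G)\le\kappa$; the lower bound $\mathrm{cf}(\kappa)\le\cov(G;\A_G)$ follows from (6), because a covering of $G$ by fewer than $\mathrm{cf}(\kappa)$ algebraic sets (normalised so each defining polynomial has leading coefficient $e$) involves fewer than $\mathrm{cf}(\kappa)$ parameters, and (6) then produces an element lying in none of them; this is (9). For (8): $G$ is simple, so a homomorphism $h\colon G\to Y$ into a $T_1$ topological semigroup is either constant (closed image) or injective; identifying $G$ with $h[G]$ in the injective case, I must show $G$ is closed in $Y$. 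Suppose $y\in\overline G\setminus G$ and pick a net $(g_i)$ in $G$ with $g_i\to y$. If some tail $\{g_i:i\ge i_0\}$ has cardinality $<|G|$, then by (5) it lies in one algebraic set $p^{-1}(b)$, whose continuous extension $\tilde p\colon Y\to Y$ has $\tilde p^{-1}(b)$ closed in $Y$, so $\tilde p(y)=b$; here one uses the precise order of $\lambda$ and the list of exponents in the Shelah relation to ensure that such polynomial equations have no solutions in $Y\setminus G$, contradicting $y\notin G$. If instead every tail has cardinality $|G|$, then $\{g_i:i\ge i_0\}^{36}=G$ for each $i_0$ by the $36$-Shelah property, while continuity of the $36$-fold product sends the product net $(g_{j_1}\cdots g_{j_{36}})$ to $y^{36}$; choosing $c\in G$ with $c\ne e$ and $c\ne y^{36}$, the neighbourhood $Y\setminus\{c\}$ of $y^{36}$ shows $c\notin\{g_j:j\ge i_1\}^{36}=G$ for a suitable $i_1$, again a contradiction.

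The step I expect to be hardest is item (8), and inside it the ``small tail'' case: one has to exploit the exact relations imposed by Lemma~\ref{l:EL} to guarantee that the algebraic subsets covering small subsets of $G$, viewed inside an ambient $T_1$ topological semigroup, are closed sets that cannot accumulate at points outside $G$ — this is where the order of $\lambda$ and the chosen exponents genuinely intervene. The other point needing care is the bookkeeping for (2): coordinating ``$X\cap G_\alpha$ is active'' with ``$X$ picks up a new element at stage $\alpha$'', together with the Fodor argument required when $\kappa=\w$.
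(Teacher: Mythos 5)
Your construction and the purely algebraic items follow the paper's route and are essentially sound: the chain built from Lemma~\ref{l:emb0} and Lemma~\ref{l:EL}, malnormality and item (3) from the successor steps, (5) and (6) from conditions of Lemma~\ref{l:EL}, simplicity from (2)+(3), and the bounds in (9). The genuine gap is in the topological core, namely the ``small tail'' case of your proof of (8). From $\tilde p(y)=e$ you want a contradiction via the claim that ``the precise order of $\lambda$ and the list of exponents ensure that such polynomial equations have no solutions in $Y\setminus G$''. No such property holds or is available: $Y$ is an arbitrary $T_1$ topological semigroup containing a copy of $G$, and the choice of order $51$ and exponents $1,\dots,18$ was made to secure the small-cancellation condition $C'(\frac17)$ inside $L*_HG$; it says nothing about solutions of $\lambda y\lambda^2y\cdots\lambda^{18}y=e$ in an ambient semigroup. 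What is really needed at this point is that the trace $h[V]$ of the algebraic set on $h[G]$ (not the full solution set $\tilde p^{-1}(e)$, which is trivially closed) be closed in $Y$; that is exactly Lemma~\ref{l:polybounded}, whose hypothesis is that $h[G]$ is a \emph{discrete} subsemigroup of $Y$. Since you obtain (7) only as a corollary of (8) via Lemma~\ref{l:ac=>pd}, discreteness is never at your disposal, and the argument cannot be closed as written. The paper proceeds in the opposite order: it first proves (7) directly (pull back the topology, use continuity of the $36$-fold product to find a neighbourhood $U$ of $e$ with $c\notin U^{36}$, deduce $|U|\le\kappa$ from the $36$-Shelah property, apply item (3) to get $U\cap xUx^{-1}\subseteq\{e\}$, and use homogeneity), and only then runs your ``small set'' argument, replacing your unjustified claim by Lemma~\ref{l:polybounded}. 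Your ``large tail'' case is correct and is precisely the paper's observation that $h^{-1}[U]$ has cardinality $<\kappa^+$.

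A secondary, repairable defect is the bookkeeping for (2). As stated, ``the set $X\cap G_\alpha$ lies in $\mathcal S_\alpha$ \dots so it was activated earlier'' does not follow: $X\cap G_\alpha$ is a subset of $G_\alpha$ that in general first becomes eligible at stage $\alpha$, and its activation stage under your bijection may be arbitrarily late, so there is no reason it belongs to $\mathcal S_\alpha$ at the very stage where $X$ acquires a new element. The correct coordination (the paper's Claims~\ref{cl:delta} and \ref{cl:delta2}) is to freeze a single set: find one $\delta$ with $\dim(X\cap G_\delta;G_\delta)=\kappa$ (regularity of $\kappa^+$ when $\kappa$ is uncountable; Fodor plus the pigeonhole principle when $\kappa=\omega$), use condition (vi) coming from Lemma~\ref{l:EL}(5) to keep $\dim(X\cap G_\delta;G_\alpha)=\kappa$ for all $\alpha\ge\delta$, and then choose $\alpha$ beyond both $\delta$ and the activation index of $X\cap G_\delta$, large enough that the target element $c$ lies in $G_\alpha$ and that $X$ meets $G_{\alpha+1}\setminus G_\alpha$; with $S=X\cap G_\delta$ and that witness $a$, Lemma~\ref{l:EL}(3) then gives $c\in X^{36}$. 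This uses only ingredients you already have, but the cofinal ``simultaneity'' claim as you phrase it is not justified.
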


\begin{proof} Fix a group $H$ of cardinality $|H|\le\kappa$. By Lemma~\ref{l:emb0}, $H$ is isomorphic to a malnormal subgroup of some group $G_0$ with $\dim(G_0)=|G_0|=\kappa$. We lose no generality assuming that the underlying set of the group $G_0$ coincides with the cardinal $\kappa$.

Let $\{S_\alpha\}_{\alpha\in 2^\kappa}$ be an enumeration of all subsets of cardinality $\kappa$ in the cardinal $\kappa^+$.   Applying Lemma~\ref{l:EL}, construct inductively an increasing transfinite sequences of groups $(G_\alpha)_{\alpha\in\kappa^+}$ starting from the group $G_0$ such that for every ordinal $\alpha\in\kappa^+$ the following conditions are satisfied:
\begin{enumerate}
\item[(i)] the underlying set of the group $G_\alpha$ coincides with the ordinal $\kappa\cdot(1+\alpha)\subset \kappa^+$;
\item[(ii)] $G_\alpha$ is a malnormal subgroup of $G_{\alpha+1}$;
\item[(iii)] there exists an element $\lambda_\alpha\in G_{\alpha+1}\setminus G_\alpha$ of order 51 such that the group $G_{\alpha+1}$ is generated by the set $\{\lambda_\alpha\}\cup G_\alpha$ and  $\lambda x\lambda^2x\lambda^3x\cdots \lambda^{18}x=e$ for all $x\in G_\alpha$;
\item[(iv)] for every $c\in G$, $a\in G_{\alpha+1}\setminus G_\alpha$ and $S\in\{S_\gamma:\gamma\le\alpha,\;S_\gamma\subseteq G_\alpha,\;\dim(S_\gamma;G_\alpha)=\kappa\}$ there are elements $x_1,\dots,x_{18}\in S$ such that $c=x_1ax_2ax_3a\cdots x_{18}a$;
\item[(v)]  for any subset $C\subset G_\alpha$  of cardinality  $|C|<\mathrm{cf}(\kappa)$ there exists an element $x\in G_{\alpha+1}$ such that $e\ne xc_1xc_2\cdots xc_n$ for all  $c_1,c_1,\dots,c_n\in C$;
\item[(vi)] if $\kappa=\w$, then any subset $A\subseteq G_\alpha$ with $\dim(A;G_{\alpha+1})<\w$ has $\dim(A;G_\alpha)<\w$.
\end{enumerate}
We claim that the group $G=\bigcup_{\alpha\in\kappa^+}G_\alpha$ has the properties claimed in Theorem~\ref{t:shelah}. This will be proved in the following  lemmas. 
\smallskip

\begin{lemma} The group $H$ is isomorphic to a malnormal subgroup of $G$.
\end{lemma}

\begin{proof} The group $H$ is isomorphic to a malnormal subgroup of the group $G_0$, by the choice of $G_0$. It will be convenient to identify $H$ with this malnormal subgroup of $G_0$. To prove that $H$ is malnormal in $G$, take any element $x\in G\setminus H$. If $x\in G_0$, then $H\cap xHx^{-1}=\{e\}$ by the malnormality of the subgroup $H$ in $G_0$. So, we assume that $x\in G\setminus G_0$. The inductive condition (i) ensures that  $G\setminus G_0=\bigcup_{\alpha\in\kappa^+}G_{\alpha+1}\setminus G_\alpha$ and hence $x\in G_{\alpha+1}\setminus G_\alpha$ for some $\alpha\in\kappa^+$. The inductive condition (ii) ensures that $G_\alpha$ is a malnormal subgroup in $G_{\alpha+1}$ and hence $H\cap xHx^{-1}\subseteq G_\alpha\cap xG_\alpha x^{-1}\subseteq\{e\}$, witnessing that the subgroup $H$ is malnormal in $G$.
\end{proof} 

The inductive condition (i) implies that the group $G$ satisfies the condition (1) of Theorem~\ref{t:shelah}:

\begin{lemma} The group $G$ has cardinality $|G|=\kappa^+=2^\kappa$.
\end{lemma}

\begin{lemma}\label{l:36} The group $G$ is $36$-Shelah.
\end{lemma}

\begin{proof} Take any subset $S\subseteq G$ of cardinality $|S|=|G|=\kappa^+$ and any element $c\in G$. 

\begin{claim}\label{cl:delta} There exists an ordinal $\delta\in\kappa^+$ such that $\dim(S\cap G_\delta;G_\delta)=\kappa$.
\end{claim}

\begin{proof} If $\kappa$ is uncountable, then by the regularity of the cardinal $\kappa^+=|S|$, there exists an ordinal $\delta\in\kappa^+$ such that $|S\cap G_\delta|=\kappa$. Since $\kappa$ is uncountable, $\dim(S\cap G_\delta;G_\delta)=|S\cap G_\delta|=\kappa$. 

The case of $\kappa=\w$ is more complicated. To derive a contradiction, assume that for every $\alpha\in\kappa^+=\w_1$ the cardinal $\dim(S\cap G_\alpha;G_\alpha)$ is finite and hence there exists a finite set $F\subseteq G_\alpha$ whose group hull contains the set $S\cap G_\alpha$, Let $\xi(\alpha)\le\alpha$ be the smallest ordinal such that the group $G_{\xi(\alpha)}$ contains a finite subset $F_\alpha$ whose group hull contains the set $S\cap G_\alpha$. If the ordinal $\alpha$ is limit, then the inductive condition (i) guarantees that $G_\alpha=\bigcup_{\beta\in\alpha}G_\beta$ and hence $\xi(\alpha)<\alpha$. By Fodor's Theorem 8.7 in  \cite{Jech}, there exists an ordinal $\gamma\in \kappa^+=\w_1$ and an uncountable set $\Omega\subseteq\w_1$ of limits ordinals such that $\xi(\alpha)=\gamma$ and hence $F_\alpha\subseteq G_\gamma$ for all $\alpha\in\Omega$.
 Since the group $G_\gamma$ is countable, by the Pigeonhole Principle, there exists a finite set $F\subseteq G_\gamma$ such that the set $\Lambda=\{\alpha\in\Omega:F_\alpha=F\}$ is uncountable. Then for every $\alpha\in\Lambda$ the set $S\cap G_\alpha$ is contained in the countable subsemigroup $\langle F_\alpha\rangle=\langle F\rangle$ generated by the finite set $F$ and hence the set $S=\bigcup_{\alpha\in\Lambda}(S\cap G_\alpha)\subseteq\langle F\rangle$ is countable, which contradicts the choice of $S$. This contradiction shows that $\dim(S\cap G_\delta;G_\delta)=\w=\kappa$ for some $\delta\in\kappa^+$.
 \end{proof}
 
\begin{claim}\label{cl:delta2} There exists an ordinal $\delta\in\kappa^+$ such that $\dim(S\cap G_\delta;G_\alpha)=|S\cap G_\delta|=\kappa$ for every $\alpha\in[\delta,\kappa^+)$.
\end{claim}

\begin{proof} By Claim~\ref{cl:delta}, there exists an ordinal $\delta\in\kappa^+$ such that $\dim(S\cap G_\delta;G_\delta)=\kappa$ and hence $|S\cap G_\delta|=\kappa$. If $\kappa$ is uncountable, then for every $\alpha\in[\delta,\kappa^+)$ we have $\dim(S\cap G_\delta;G_\alpha)=|G\cap S_\delta|=\kappa$.

Next, assume that $\kappa=\w$. To derive a contradiction, assume that for some $\alpha\in[\delta,\kappa^+)$ we have $\dim(S\cap G_\delta;G_\alpha)<\w$ and take the smallest ordinal $\alpha$ with this property. It follows from $\dim(S\cap G_\delta;G_\delta)=\kappa$ that $\alpha>\delta$. The minimality of $\alpha$ ensures that $\alpha$ is a successor ordinal and hence $\alpha=\beta+1$ for some ordinal $\beta\in[\delta,\kappa^+)$. Now the inductive condition (vi) guarantees that $\dim(S\cap G_\delta;G_\beta)<\w$, which contradicts the minimality of $\alpha$. This contradiction shows that $\dim(S\cap G_\delta;G_\alpha)=\kappa$ for every $\alpha\in[\delta,\kappa^+)$.
\end{proof}

By Claim~\ref{cl:delta2} there exists an ordinal $\delta\in\kappa^+$ such that $\dim(S\cap G_\delta;G_\alpha)=|S\cap G_\delta|=\kappa$ for all $\alpha\in[\delta,\kappa^+)$. Let $\gamma\in\kappa^+$ be an ordinal such that $S\cap G_\delta=S_\gamma$. Since $|S|=\kappa^+$ and $G\setminus G_0=\bigcup_{\alpha\in\kappa^+}G_{\alpha+1}\setminus G_\alpha$, there exists an  ordinal $\alpha\in\kappa^+$ such that $\alpha\ge\max\{\delta,\gamma\}$ and $S\cap(G_{\alpha+1}\setminus G_\alpha)$ contains some element $a$.  The choice of the ordinal $\delta$ ensures that $\dim(S_\gamma;G_\alpha)=\kappa$. By the inductive condition (iv), there are elements $x_1,x_2,\dots,x_{18}\in S_\gamma\subseteq S$ such that $c=x_1ax_2a\cdots x_{18}a\subseteq S^{36}$. Therefore, $G=S^{36}$ and the group $G$ is $36$-Shelah.
\end{proof}

\begin{lemma}\label{l:mal} For every subset $X\subseteq G$ of cardinality $|X|<|G|$ there exists an element $g\in G$ such that $X\cap (gXg^{-1})\subseteq\{e\}$.
\end{lemma}

\begin{proof} Since $G=\bigcup_{\alpha\in\kappa^+}G_\alpha$ and $\mathrm{cf}(\kappa^+)=\kappa^+$, there exists an ordinal $\alpha\in \kappa^+$ such that $X\subseteq G_\alpha$. By the malnormality of $G_\alpha$ in $G_{\alpha+1}$, there exists an element $g\in G_{\alpha+1}\setminus G_\alpha$ such that $X\cap gXg^{-1}\subseteq G_\alpha\cap gG_\alpha g^{-1}\subseteq\{e\}$.
\end{proof}

\begin{lemma} The group $G$ is simple.
\end{lemma}

\begin{proof} Take any normal subgroup $N\ne\{e\}$ in $G$. Choose any element $a\in N\setminus\{e\}$ and find $\alpha\in\kappa^+$ such that $a\in G_\alpha$. For every ordinal $\beta\ge\alpha$ in $\kappa^+$  and any $b\in G_{\beta+1}\setminus G_\beta$ we have $bab^{-1}\in N\setminus G_\beta$ by the malnormality of $G_\beta$ in $G_{\beta+1}$. Therefore, $|N|=\kappa^+$ and $N=N^{36}=G$ by the $36$-Shelah property of the group $G$, proved in Lemma~\ref{l:36}.
\end{proof}

\begin{lemma}\label{l:cov1} For every subset $X\subseteq G$ of cardinality $|X|<|G|$ there exists an element $\lambda\in G$ of order $51$ such that $X\subseteq \{x\in G:\lambda x\lambda^2x\lambda^3x\cdots \lambda^{18}x=e\}$ and hence $\cov(X;\A_G)\le1$.
\end{lemma}

\begin{proof} By the regularity of the cardinal $\kappa^+=|G|$, there exists  an ordinal $\alpha\in\kappa^+$ such that $X\subseteq G_\alpha$. By the inductive condition (iii), there exists an element $\lambda\in G_{\alpha+1}\setminus G_\alpha$ of order 51 such that $X\subseteq G_\alpha\subseteq \{x\in G:\lambda x\lambda^2x\lambda^3\cdots\lambda^{18}x=e\}$ and hence $\cov(X;\A_G)\le 1$.
\end{proof}

\begin{lemma}\label{l:non-poly} For every subset $C\subseteq G$ of cardinality $|C|<\mathrm{cf}(\kappa)$ there exists an element $x\in G$ such that $e\ne xc_1xc_2\cdots xc_n$ for all $c_1,\dots, c_n\in C$.
\end{lemma}

\begin{proof} By the regularity of the cardinal $\kappa^+$, there exists an ordinal $\alpha\in\kappa^+$ such that $C\subseteq G_\alpha$. By the inductive condition (v), there exists an element $x\in G_{\alpha+1}\subseteq G$ such that $e\ne xc_1\cdots xc_n$ for all elements $c_1,\dots, c_n\in C$.
\end{proof}

\begin{lemma}\label{l:pd} The group $G$ is projectively $\mathsf{T_{\!1}S}$-discrete.
\end{lemma}

\begin{proof}  Given any homomorphism $h:G\to Y$ to a $T_1$ topological semigroup $Y$, we should prove that the image $h[G]$ is a discrete subspace of $Y$. Since the group $G$ is simple, the homomorphism $h$ is either constant or injective. If $h$ is constant, then the space $h[G]$ is discrete, being a singleton. So, assume that $h$ is injective. Let $\tau$ be a unique topology on $G$
such that the bijective homomorphism $h:(G,\tau)\to h[G]$ is a homeomorphism. It follows that $\tau$ is a $T_1$ semigroup topology on $G$. Choose any element $c\in G\setminus\{e\}$ and using the continuity of the group multiplication in the topological semigroup $(G,\tau)$, find a neighborhood $U\in\tau$ of
$e$ such that $c\notin U^{36}$. Since the group $G$ is $36$-Shelah, $|U|\le\kappa$ and by Lemma~\ref{l:mal}, there exists $x\in G$ such that $U\cap xUx^{-1}\subseteq\{e\}$, which means that the identity $e$ of $G$ is an isolated point
of the topological space $(G,\tau)$. The topological homogeneity of the paratopological group
$(G,\tau)$ implies that the topological space $(G,\tau)$ is discrete and so is its homeomorphic copy $h[G]\subseteq Y$.
\end{proof}

\begin{lemma}\label{l:ac} The group $G$ is absolutely $\mathsf{T_{\!1}S}$-closed 
 and projectively $\mathsf{T_{\!1}S}$-discrete.
\end{lemma}

\begin{proof} Assuming that the group $G$ is not absolutely $\mathsf{T_{\!1}S}$-closed, we can find a homomorphism $h:G\to Y$ to a topological semigroup $Y\in\mathsf{T_{\!1}S}$ whose image $h[G]$ is not closed in $Y$. Since the group $G$ is simple, the homomorphism $h$ is injective. Since the group $G$ is projectively $\mathsf{T_{\!1}S}$-discrete, the image $h[G]$ is a discrete subspace of $Y$. Chose any point $y\in\overline{h[G]}\setminus h[G]$. Let $c\in G$ be any point such that $h(c)\ne y^{36}$. By the continuity of the semigroup operation in the $T_1$ topological semigroup $Y$, there exists a neighborhood $U\subseteq Y$ of  $y$ such that $h(c)\notin U^{36}\subseteq Y$. Consider the set $V=h^{-1}[U]$ and observe that $c\notin V^{36}$. Since the group $G$ is 36-Shelah, the set $V$ has cardinality $|V|<|G|=\kappa^+$.
By Lemma~\ref{l:cov1}, the set $V$ is polybounded in $G$ and then the set $h[V]$ is polybounded in $h[X]$. By Lemma~\ref{l:polybounded}, the set $h[V]$ is closed in $Y$, which is not possible as $y\in \overline{h[V]}\setminus h[V]$. This contradiction shows that the group $G$ is absolutely $\mathsf{T_{\!1}S}$-closed. By Lemma~\ref{l:ac=>pd}, the absolutely $\mathsf{T_{\!1}S}$-closed group $G$ is projectively $\mathsf{T_{\!1}S}$-discrete.
\end{proof}

\begin{lemma} $ \mathrm{cf}(\kappa)\le\cov(G;\A_G)\le\kappa$ and hence the group $G$ is not polybounded.
\end{lemma}

\begin{proof} Let $\gamma=\cov(G;\A_G)$. By the definition of the cardinal $\cov(G;\A_G)$, there exists a cover $\{A_\alpha\}_{\alpha\in\gamma}$ of $G$ by $\gamma$ many algebraic sets. For every $\alpha\in\gamma$, find $b_\alpha\in G$ and a semigroup polynomial $p_\alpha:G\to G$ such that $A_\alpha=p_\alpha^{-1}(b_\alpha)$. By the definition of a semigroup polynomial, there exist elements $c_{\alpha,0},c_{\alpha,1},\cdots c_{\alpha,n_\alpha}\in G$ such that $p_\alpha(x)=c_{\alpha,0}xc_{\alpha,1}\cdots xc_{\alpha,n_\alpha}$ for all $x\in G$. Replacing the last coefficient $c_{\alpha,n_\alpha}$ by $c_{\alpha,n_\alpha}b_\alpha^{-1}c_{0,\alpha}^{-1}$, we can assume that $b_\alpha=e=c_{\alpha,0}$ and hence $G=\bigcup_{\alpha\in\gamma}p_\alpha^{-1}(e)$. Consider the set $C=\bigcup_{\alpha\in\gamma}\{c_{\alpha,1},\cdots,c_{\alpha,n_\alpha}\}$ and observe that $|C|\le\gamma\cdot\w$. Assuming that $\gamma=\cov(G;\A_G)<\mathrm{cf}(\kappa)$, we conclude that $|C|<\mathrm{cf}(\kappa)$. By Lemma~\ref{l:non-poly}, there exists $x\in G$ such that $e\ne xc_1\cdots xc_n$ for all elements $c_1,\dots,c_n\in C$. Find $\alpha\in\gamma$ such that $x\in A_\alpha$ and conclude that 
$$e=b_\alpha=p_\alpha(x)=xc_{\alpha,1}xc_{\alpha,2}\cdots xc_{\alpha,n_\alpha}\ne e.$$ This contradiction proves that $\cov(G;\A_G)\ge\mathrm{cf}(\kappa)\ge\w$.  The upper bound $\cov(G;\A_G)\le\kappa$ follows from Lemma~\ref{l:ac} and Lemma~\ref{l:zero-closed}. 
\end{proof}
\end{proof}

\section{Some remarks and open problems}\label{s:final}

By Theorem~\ref{t:main}, under Continuum Hypothesis there exists an uncountable $36$-Shelah group. On the other hand, we have the following result, proved by Yves Cornullier in his answer to the question \cite{MO} of the author on {\tt MathOverflow}.

\begin{theorem}[Cornullier] Every $3$-Shelah group is finite.
\end{theorem}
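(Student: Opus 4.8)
The plan is to prove the contrapositive: no infinite group is $3$-Shelah (combined with the trivial fact that finite groups are $1$-Shelah, this yields the characterisation $3$-Shelah $\Leftrightarrow$ finite). So let $G$ be an infinite $3$-Shelah group. Being Shelah, $G$ is Jonsson, so every proper subgroup of $G$ has cardinality $<|G|$, $G$ has no proper finite-index subgroup, and every quotient of $G$ is again Jonsson. If $G$ is countable it is not Shelah by the cited result of Protasov, so we may assume $\kappa:=|G|>\w$ and look for a contradiction by producing a subset $X\subseteq G$ with $|X|=\kappa$ and $X^3\neq G$.

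First I would dispose of the abelian case. If $G$ is abelian, then for each $n\ge 2$ the quotient $G/nG$ is an abelian Jonsson group of exponent dividing $n$; by Pr\"ufer's theorem an infinite abelian group of bounded exponent is a direct sum of infinitely many finite cyclic groups and so has a proper subgroup of full cardinality, whence $G/nG$ is finite, $nG$ has finite index, and $nG=G$ (being of full cardinality). Thus $G$ is divisible; a divisible abelian Jonsson group must be isomorphic to $\mathbb Q$ or to a Pr\"ufer group $\mathbb Z(p^\infty)$ — two summands already produce a proper full-cardinality subgroup — and both are countable, contradicting $\kappa>\w$. Hence $G$ is non-abelian. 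A centraliser count gives more: were every conjugacy class of $G$ of cardinality $<\kappa$, every centraliser would have index $<\kappa$, hence full cardinality, hence be all of $G$, forcing $G$ abelian; so every non-central $a\in G$ has $|\{gag^{-1}:g\in G\}|=\kappa$. Moreover $G$ is perfect: otherwise, fixing a nontrivial abelianisation $\pi\colon G\to G/[G,G]$ and a non-central $a$, the set $X:=\{gag^{-1}:g\in G\}$ has $|X|=\kappa$ while $\pi[X^3]=\{\pi(a)^3\}$ is a single point of a nontrivial group, so $X^3\neq G$.

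The core of the argument is a transfinite construction of the bad set. Fix $c\in G\setminus\{e\}$ and build $X=\{x_\alpha:\alpha<\kappa\}$ recursively so that no product $x_ix_jx_k$ with $i,j,k\le\alpha$ equals $c$. At stage $\alpha$, the constraints in which the new index $\alpha$ occurs only once forbid at most $3(|\alpha|+1)^2<\kappa$ values of $x_\alpha$; the remaining ``degenerate'' constraints (where $\alpha$ is repeated) confine $x_\alpha$ to finitely many translates of sets $Q_d:=\{x\in G:x^2=d\}$ together with the single set $\{x\in G:x^3=c\}$. Therefore, \emph{provided} all the sets $Q_d$ ($d\in G$) and the set $\{x:x^3=c\}$ have cardinality $<\kappa$, the forbidden set has cardinality $<\kappa$ at every stage, the recursion runs through all $\kappa$ steps, and the resulting $X$ satisfies $|X|=\kappa$ and $c\notin X^3$, contradicting the $3$-Shelah property of $G$.

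The main obstacle, and where I expect the real work to lie, is the complementary case, in which $G$ carries an abundance of low-degree torsion relations: $|Q_{d_0}|=\kappa$ for some $d_0\in G$, or $|\{x:x^3=c\}|=\kappa$ for every $c\neq e$. In the first case, fixing $x_0$ with $x_0^2=d_0$ one checks that $Q_{d_0}=x_0T$ with $T:=\{t\in G:x_0^{-1}tx_0=t^{-1}\}$ a symmetric subset of $G$ of cardinality $\kappa$; a short computation gives $Q_{d_0}^{\,3}=T^3d_0x_0$, so the $3$-Shelah property forces $T^3=G$. Also $\mathrm{conj}_{x_0}$ inverts the generating set $T$ and its square fixes $T$ pointwise, so $x_0^2$ centralises $T$ and hence $x_0^2\in Z(G)$, since $T$ has full cardinality; in particular $\mathrm{conj}_{x_0}$ has order at most $2$. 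Deriving a contradiction from the coexistence of this structure with perfectness and non-abelianness — say by examining commutators of elements of $T$, or by testing the $3$-Shelah property against a better-chosen set such as a union of several $Q_d$'s, or of $Q_{d_0}$ with a large conjugacy class — is, I expect, exactly the step where Cornullier's argument does its real work; the cube-root case should yield to an entirely analogous reduction.
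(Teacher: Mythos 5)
There is nothing in the paper to compare against: the result is stated without proof and attributed to Cornulier's MathOverflow answer \cite{MO}, so your proposal stands or falls on its own. As it stands it is not a complete proof. The greedy transfinite construction is set up correctly (the only obstructions at stage $\alpha$ are the nondegenerate constraints, of which there are at most $3|\alpha|^2$, the equations $x^2=d$ and $xx_jx=c$ — the latter indeed being the translate $Q_{cx_j}x_j^{-1}$ — and the single equation $x^3=c$), and the reductions you carry out in the ``large $Q_{d_0}$'' case are sound: $Q_{d_0}=x_0T$ with $T=\{t: x_0^{-1}tx_0=t^{-1}\}$, $T^3=G$ by $36$... by the $3$-Shelah property applied to $Q_{d_0}$, and $x_0^2\in Z(G)$. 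But you then stop exactly where the theorem still has to be proved: no contradiction is derived from the existence of an order-$\le 2$ automorphism inverting a full-cardinality set $T$ with $T^3=G$ in a perfect nonabelian group, and such a configuration is not obviously self-contradictory. The cube-root case is dismissed as ``entirely analogous,'' but it is not: writing $x=x_0t$ in $x^3=c=x_0^3$ yields $tx_0tx_0t=x_0^2$, which is not an inversion condition and gives no analogue of $T$. Since groups rich in involutions (or in elements with a prescribed square or cube) are precisely the ones for which your greedy construction fails, these unfinished cases are the heart of the matter, and the statement remains unproved.

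Two secondary points would also need repair even if the hard cases were settled. First, ``every quotient of $G$ is again Jonsson'' is only true for quotients of full cardinality, so the step ``$G/nG$ is an abelian Jonsson group'' is unjustified when $|G/nG|<|G|$; for the abelian case you should instead quote the classical fact that no uncountable abelian group is Jonsson. Second, at stage $\alpha$ the degenerate constraints produce $|\alpha|$-many (not finitely many) translates of sets $Q_d$; if $\kappa$ is regular this is harmless, but for singular $\kappa$ a union of $|\alpha|<\kappa$ sets each of size $<\kappa$ can have size $\kappa$, so your dichotomy misses the case of a singular $\kappa$ in which every $Q_d$ is small but the cardinals $|Q_d|$ are unbounded in $\kappa$; there the construction can stall even though no single $Q_{d_0}$ has full cardinality, and neither branch of your case analysis applies.
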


\begin{question} Is every $4$-Shelah group finite?
\end{question}

Also Theorem~\ref{t:main} suggests the following natural problem, posed also in  \cite{MO}

\begin{problem} Can an infinite Shelah group be constructed in ZFC?
\end{problem}

\end{document}